\theoremstyle{definition} 
\newtheorem{theorem}{Theorem}[section]
\newtheorem{lemma}[theorem]{Lemma}
\newtheorem{proposition}[theorem]{Proposition}
\newtheorem{example}[theorem]{Example}
\newtheorem{conjecture}[theorem]{Conjecture}
\newtheorem{question}[theorem]{Question}
\newcommand{\cart}{\boxempty}
\DeclareMathOperator{\val}{val}
\DeclareMathOperator{\gon}{gon}
\DeclarePairedDelimiter\abs{\lvert}{\rvert}
\DeclarePairedDelimiter\floor{\lfloor}{\rfloor}
\DeclarePairedDelimiter\norm{\lVert}{\rVert}
\DeclarePairedDelimiter\set{\{}{\}}
\DeclarePairedDelimiter\paren{(}{)}
\let\oldabs\abs
\def\abs{\@ifstar{\oldabs}{\oldabs*}}
\let\oldnorm\norm
\def\norm{\@ifstar{\oldnorm}{\oldnorm*}}
\let\oldparen\paren
\def\paren{\@ifstar{\oldparen}{\oldparen*}}
\begin{document}


\title{On the gonality of Cartesian products of graphs}
\author{Ivan Aidun and Ralph Morrison}
\date{}


\maketitle

\begin{abstract}
In this paper we study Cartesian products of graphs and their divisorial gonality, which is a tropical version of the gonality of an algebraic curve.  We present an upper bound on the gonality of the Cartesian product of any two graphs, and provide instances where this bound holds with equality, including for the $m\times n$ rook's graph with $\min\{m,n\}\leq 5$.  We use our upper bound to prove that Baker's gonality conjecture holds for the Cartesian product of any two graphs with two or more vertices each, and we determine precisely which nontrivial product graphs have gonality equal to Baker's conjectural upper bound.
\end{abstract}

\section{Introduction}

In \cite{bn}, Baker and Norine introduced a theory of divisors on finite graphs in parallel to divisor theory on algebraic curves.  If $G=(V,E)$ is a connected multigraph, one treats $G$ as a discrete analog of an algebraic curve of genus $g(G)$, where $g(G)=|E|-|V|+1$.   This program was extended to metric graphs in \cite{gk} and \cite{mz}, and has been used to study algebraic curves through combinatorial means.

A divisor on a graph can be thought of as a configuration of poker chips on the vertices of the graph, where a negative number of chips indicates debt.  Equivalence of divisors is then defined in terms of chip-firing moves.  Each divisor $D$ has a \emph{degree}, which is the total number of chips; and a \emph{rank}, which measures how much added debt can be cancelled out by $D$ via chip-firing moves.



The \emph{gonality} of $G$ is the minimum degree of a rank $1$ divisor on $G$. This is one graph-theoretic analogue of the gonality of an algebraic curve \cite{cap}. In general, the gonality of a graph is NP-hard to compute \cite{gij}.  Nonetheless, we know the gonality of certain nice families of graphs:  the gonality of $G$ is $1$ if and only if $G$ is a tree \cite[Lemma 1.1]{bn2}; the complete graph $K_n$ has gonality $n-1$  for $n\geq 2$ \cite[Theorem 4.3]{db}; and the gonality of the complete $k$-partite graph $K_{n_1,\cdots n_k}$ is $\sum_{i=1}^k n_k-\max\{n_1,\cdots,n_k\}$ \cite[Example 3.2]{vddbg}.  One of the biggest open problems regarding the gonality of graphs is the following.

\begin{conjecture}[The gonality conjecture, \cite{baker}]\label{conjecture:gonality}  The gonality of a graph $G$ is at most $\floor{ \frac{g(G)+3}{2}}$.

\end{conjecture}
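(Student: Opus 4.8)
Because the gonality conjecture is the exact combinatorial shadow of the classical Brill--Noether bound $\floor{(g+3)/2}$ for the gonality of a smooth curve, the plan is to attack it through Baker's specialization machinery rather than by a direct chip-firing construction. First I would realize the connected multigraph $G$ of genus $g=g(G)$ as the dual graph of a \emph{totally degenerate} regular semistable arithmetic surface $\mathfrak{X}$ over a discrete valuation ring: the generic fiber $C$ is then a smooth proper curve, every component of the special fiber is rational, and the arithmetic genus of $C$ equals the first Betti number $g(G)$ rather than exceeding it. One first reduces to the case of minimum valence at least two by collapsing pendant trees, which changes neither the genus nor the gonality.

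The second step invokes the existence half of Brill--Noether theory (Kempf; Kleiman--Laksov), valid for \emph{every} smooth curve: since
\[
\rho(g,1,d)=g-2(g-d+1)=2d-g-2\ge 0 \quad\text{for}\quad d=\floor{\tfrac{g+3}{2}}=\ceil{\tfrac{g+2}{2}},
\]
the curve $C$ carries a divisor $D$ of degree $d$ with $h^0(C,\mathcal{O}(D))\ge 2$, i.e.\ a $g^1_d$, so that $\mathrm{rank}_C(D)\ge 1$. Applying Baker's specialization inequality $\mathrm{rank}_\Gamma(\tau_*(D))\ge \mathrm{rank}_C(D)$ then yields a divisor $\tau_*(D)$ of degree $d$ and rank at least one on the dual graph $\Gamma$, giving the bound $\floor{(g+3)/2}$ for that graph.

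The hard part --- and the reason the conjecture remains open --- is that this argument most naturally controls the \emph{metric} (tropical) gonality rather than the combinatorial Baker--Norine gonality of $G$ itself. Demanding that $\mathfrak{X}$ be regular forces every node to have thickness one, which in general requires subdividing the edges of $G$, so the dual graph $\Gamma$ that appears is a refinement of $G$, equivalently the underlying metric graph. Since combinatorial divisorial gonality is \emph{not} invariant under subdivision and can strictly exceed the metric gonality, the bound obtained for $\Gamma$ need not descend to $G$. Closing this combinatorial--metric gap is exactly the missing ingredient, and no general mechanism for it is known; this is precisely why the statement is settled for metric/tropical curves but open for finite graphs.

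Absent a universal descent, I would fall back on a direct combinatorial induction on $g(G)$, deleting a non-bridge edge $e$ (which lowers the genus by one) and trying to promote an optimal rank-$1$ divisor on $G-e$ to one on $G$ of controlled degree. The obstacle is that gonality behaves erratically under edge deletion and contraction, so no such monotonicity is available in general. This is why concrete progress --- including the present paper's resolution for Cartesian products --- instead exploits special structure to build explicit low-degree rank-$1$ divisors by hand, rather than establishing the bound uniformly.
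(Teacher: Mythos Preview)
The statement you are attempting to prove is Conjecture~\ref{conjecture:gonality}, and the paper does \emph{not} prove it: it is stated there as an open problem (with evidence cited for low genus), and the paper's contribution is Theorem~\ref{theorem:main}, which verifies the conjecture only for nontrivial Cartesian products $G\boxempty H$. So there is no ``paper's own proof'' to compare against.

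Your proposal is not a proof but an honest account of why the natural specialization strategy stalls, and on that point you are exactly right. The Kleiman--Laksov existence theorem plus Baker's specialization lemma does yield the bound $\floor{(g+3)/2}$ for the \emph{metric} gonality of the associated tropical curve, but regularity of the model forces edge subdivisions, and combinatorial gonality is not subdivision-invariant; this is precisely the gap that keeps the conjecture open for finite graphs. Your fallback induction on genus by deleting a non-bridge edge also fails for the reason you name: gonality has no useful monotonicity under edge deletion.

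What the paper actually does for product graphs is entirely different and avoids both obstacles. It first proves the explicit upper bound $\gon(G\boxempty H)\le \min\{\gon(G)\,|V(H)|,\ \gon(H)\,|V(G)|\}$ by exhibiting a concrete rank-one divisor (copy a winning divisor on $G$ onto every $H$-fiber), and then shows by elementary counting with $|V|$, $|E|$, and the genus bound $\gon\le g+1$ (or $\gon\le g$ when $g\ge 2$) that this minimum is at most $\floor{(g(G\boxempty H)+3)/2}$, handling a short finite list of small $H$ separately. No specialization, no Brill--Noether, no metric graphs. If you want to engage with what the paper proves, the target is Theorem~\ref{theorem:main}, not the general conjecture.
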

This conjecture has been confirmed for graphs with $g(G)\leq 5$ in \cite{ar}, with strong additional evidence coming from \cite{cd}.

In this paper, we study the gonality of the Cartesian product $G\cart H$ of two graphs $G$ and $H$.   This is the first such systematic treatment for these types of graphs, although many conjectures have been posed on the gonality of particular products \cite{treewidth, db, vddbg}. Our main result is that if $G$ and $H$ have at least two vertices each, then  $G\boxempty H$ satisfies Conjecture \ref{conjecture:gonality}.

\begin{theorem}\label{theorem:main}
Let $G$ and $H$ be connected graphs with at least two vertices each.  Then \[\textrm{gon}(G\boxempty H)\leq \left\lfloor\frac{g(G\boxempty H)+3}{2}\right\rfloor.\]
\end{theorem}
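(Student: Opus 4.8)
The plan is to reduce the theorem to a short arithmetic inequality: I would first construct, for \emph{any} two graphs, a positive-rank divisor on $G\boxempty H$ whose degree is cheaply controlled, and then play that degree bound off against the Riemann--Roch theorem for graphs. The main structural ingredient I would establish is
\[
\gon(G\boxempty H)\ \le\ \min\{\,\abs{V(H)}\cdot\gon(G),\ \abs{V(G)}\cdot\gon(H)\,\}.
\]
By the symmetry $G\boxempty H\cong H\boxempty G$ it suffices to show $\gon(G\boxempty H)\le\abs{V(H)}\cdot\gon(G)$. Choose a divisor $D_0$ on $G$ of degree $\gon(G)$ with rank at least $1$, and let $D$ be the divisor on $G\boxempty H$ with $D(u,w)=D_0(u)$ for all $(u,w)\in V(G)\times V(H)$; thus $D$ places a copy of $D_0$ on every ``$H$-fiber'' $V(G)\times\{w\}$, and $\deg D=\abs{V(H)}\cdot\gon(G)$. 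The key point is that firing the set $A\subseteq V(G)$ simultaneously on all fibers --- i.e.\ performing the set-firing move at $A\times V(H)\subseteq V(G\boxempty H)$ --- changes the divisor on each fiber in exactly the way the set-firing move at $A$ changes a divisor on $G$: every edge in the $H$-direction out of a fired vertex $(u,w)$ with $u\in A$ leads to $(u,w')$, which again lies in $A\times V(H)$, so no chips travel along $H$-direction edges. Hence any sequence of set-firings witnessing $D_0\sim D_0'$ in $G$ lifts to one in $G\boxempty H$ taking $D$ to the divisor $D'$ with $D'(u,w)=D_0'(u)$. Given any target vertex $(u_0,w_0)$, applying this with $D_0'\ge 0$ and $D_0'(u_0)\ge 1$ (possible since $r_G(D_0)\ge 1$) shows $D$ is equivalent to an effective divisor with a chip at $(u_0,w_0)$, so $r(D)\ge 1$.

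For the remaining two ingredients: since $\abs{V(G\boxempty H)}=\abs{V(G)}\,\abs{V(H)}$ and $\abs{E(G\boxempty H)}=\abs{V(H)}\,\abs{E(G)}+\abs{V(G)}\,\abs{E(H)}$, a direct computation gives
\[
g(G\boxempty H)=\abs{V(H)}\,g(G)+\abs{V(G)}\,g(H)+\paren{\abs{V(G)}-1}\paren{\abs{V(H)}-1},
\]
and I would use the standard consequence of the Baker--Norine Riemann--Roch theorem \cite{bn} that every graph $X$ satisfies $\gon(X)\le g(X)+1$ (any divisor of degree $g(X)+1$ has rank at least $1$). Now write $m=\abs{V(G)}$, $n=\abs{V(H)}$, $g=g(G\boxempty H)$, and suppose for contradiction that $\gon(G\boxempty H)>\floor{(g+3)/2}$. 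Then both $n\cdot\gon(G)$ and $m\cdot\gon(H)$ are integers exceeding $\floor{(g+3)/2}$, hence each is at least $\floor{(g+3)/2}+1\ge(g+4)/2$; that is, $2n\cdot\gon(G)\ge g+4$ and $2m\cdot\gon(H)\ge g+4$. Substituting $\gon(G)\le g(G)+1$, $\gon(H)\le g(H)+1$ and the genus formula and simplifying, the first inequality becomes
\[
m\,g(H)+(m-1)(n-1)+4\ \le\ n\,g(G)+2n ,
\]
and, symmetrically, the second becomes
\[
n\,g(G)+(m-1)(n-1)+4\ \le\ m\,g(H)+2m .
\]
Adding these and cancelling yields $(m-1)(n-1)+4\le m+n$, i.e.\ $(m-2)(n-2)\le -1$, which is impossible for $m,n\ge 2$; this contradiction proves the theorem.

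I expect the real work to be concentrated in the first ingredient: one must set up divisor equivalence on a Cartesian product carefully enough to be certain that set-firings on the factor $G$ lift \emph{faithfully}, that no chips leak across fibers, and that the lifted divisor genuinely has positive rank; the genus formula and the final estimate are essentially bookkeeping by comparison. It is worth emphasizing that the argument invokes only $\gon(X)\le g(X)+1$ for the factors --- never the gonality conjecture itself --- which is precisely why the conclusion for $G\boxempty H$ is unconditional even though Conjecture~\ref{conjecture:gonality} remains open in general.
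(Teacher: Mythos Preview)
Your argument is correct, and it is genuinely different from the paper's. Both proofs rest on the same two ingredients you identify --- the product bound $\gon(G\boxempty H)\le\min\{n\gon(G),m\gon(H)\}$ and the Riemann--Roch consequence $\gon(X)\le g(X)+1$ --- but the paper deploys them asymmetrically: it assumes $e_2v_1\le e_1v_2$, uses only the bound $\gon(G\boxempty H)\le v_1\gon(H)$, and then splits into cases according to whether $v_2\ge 4$ or $g(H)\ge 2$ (where a direct estimate gives a gap of at least $2$) versus $H\in\{K_2,P_3,K_3,B_2,B_{2,1}\}$ (each handled by its own small computation, sometimes invoking both halves of the product bound, and in the $P_3$ case invoking the exact gonality of a tree product). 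Your proof instead uses both halves of the product bound symmetrically and in contrapositive form, and the single addition step collapses everything to $(m-2)(n-2)\le -1$; no case analysis is needed. The payoff of your route is brevity and uniformity. The payoff of the paper's route is that its case analysis records \emph{how large} the gap $\lfloor(g+3)/2\rfloor-\gon(G\boxempty H)$ is in each regime, and that finer information is exactly what is recycled in Section~\ref{section:equality} to classify the products achieving equality; your contradiction argument, as written, does not immediately yield that.
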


As a key step towards proving Theorem \ref{theorem:main}, we prove the following upper bound on the gonality of $G\boxempty H$.

\begin{proposition}\label{prop:upperbound}  For any two graphs $G$ and $H$,
\[\textrm{gon}(G\boxempty H)\leq\min\{\textrm{gon}(G)\cdot |V(H)|,\textrm{gon}(H)\cdot |V(G)|\}\]
\end{proposition}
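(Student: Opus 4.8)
The plan is to take a divisor of minimum degree and rank $1$ on $G$ -- call it $D_G$, so $\deg(D_G) = \gon(G)$ -- and use it to build a divisor on $G\boxempty H$ of degree $\gon(G)\cdot|V(H)|$ that has positive rank. By symmetry this gives the same bound with the roles of $G$ and $H$ swapped, and taking the minimum finishes the proof. So I will focus on constructing a positive-rank divisor on $G\boxempty H$ from $D_G$.

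Recall that $V(G\boxempty H) = V(G)\times V(H)$, and for each fixed vertex $h\in V(H)$ the induced subgraph on $V(G)\times\{h\}$ is a copy of $G$; similarly each fiber $\{v\}\times V(H)$ is a copy of $H$. The natural candidate is to place a copy of $D_G$ on each of the $|V(H)|$ horizontal copies of $G$: define $D = \sum_{h\in V(H)} D_G^{(h)}$, where $D_G^{(h)}$ is the pushforward of $D_G$ onto the copy $V(G)\times\{h\}$. Clearly $\deg(D) = \gon(G)\cdot|V(H)|$. It remains to show $D$ has rank at least $1$, i.e. for every vertex $(v,h)\in V(G\boxempty H)$, the divisor $D - (v,h)$ is equivalent to an effective divisor.

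The key steps are: (1) Since $D_G$ has rank $1$ on $G$, for the target vertex $v\in V(G)$ there is a chip-firing move on $G$ (a sequence of set-firings, recorded by some function $f_v\colon V(G)\to\Z$) taking $D_G$ to an effective divisor $D_G'$ with $D_G'(v)\geq 1$. (2) Apply this same firing move simultaneously on \emph{every} horizontal copy of $G$ inside $G\boxempty H$ -- that is, fire the function $F\colon V(G\boxempty H)\to\Z$ given by $F(u,h') = f_v(u)$, which ignores the $H$-coordinate. Because edges of $G\boxempty H$ either lie within a single $G$-copy (where $F$ restricts to $f_v$) or connect two vertices with the same $G$-coordinate (where $F$ takes equal values, contributing nothing to the chip-firing change), the effect of firing $F$ on $D$ is exactly to apply the move $f_v$ to each $D_G^{(h)}$ independently. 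Hence $D$ is equivalent to $\sum_{h} (D_G')^{(h)}$, which is effective and has at least one chip at $(v,h)$ for every $h$, in particular at the target $(v,h)$. So $D - (v,h)$ is equivalent to an effective divisor, as needed.

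The step requiring the most care is verifying (2): that firing the "constant-in-$H$" function $F$ on $G\boxempty H$ reproduces, copy by copy, the effect of firing $f_v$ on $G$. This is really a claim about the Laplacian of a Cartesian product -- that $L_{G\boxempty H}$ applied to a vector depending only on the $G$-coordinate equals (a copy on each $H$-fiber of) $L_G$ applied to the corresponding vector on $G$ -- which follows from $L_{G\boxempty H} = L_G\otimes I + I\otimes L_H$ and the fact that $L_H$ kills constant vectors. Once this identity is in hand the rest is bookkeeping: tracking that effectivity is preserved and that the target chip appears. I should also note the degenerate cases (e.g. if $G$ or $H$ has gonality that makes one side of the min vacuous, or very small graphs), but these cause no trouble since the construction only uses that $D_G$ exists with positive rank, which holds for any connected graph.
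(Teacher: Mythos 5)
Your proposal is correct and is essentially the paper's own argument: place a copy of a minimum-degree positive-rank divisor on $G$ across every copy $G\boxempty\{w\}$, and simulate each chip-firing move on $G$ by firing the corresponding vertex in every $H$-fiber simultaneously, so the cross edges contribute nothing. The paper phrases this in terms of chip-firing moves rather than the Laplacian identity $L_{G\boxempty H}=L_G\otimes I+I\otimes L_H$, but the content is the same.
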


For most examples of $G$ and $H$ where $\textrm{gon}(G\boxempty H)$ is known, the inequality is in fact an equality.  This leads us to pose the following question.

\begin{question}
\label{conjecture:product} For which graphs $G$ and $H$ do we have
\[\gon(G \boxempty H) = \min\set{\gon(G) \cdot \abs{V(H)},\gon(H) \cdot \abs{V(G)}}?\]
\end{question}

When a graph product $G \boxempty H$ has gonality $\min\set{\gon(G) \cdot \abs{V(H)},\gon(H) \cdot \abs{V(G)}}$, we say that it has the \emph{expected gonality}.  Some product graphs have gonality smaller than the expected gonality.  Let $G$ be a graph with three vertices $v_1,v_2$ and $v_3$, with edge multiset $\{v_1v_2,v_1v_2,v_2v_3\}$.  Since $g(G)=1$, we will see that $\gon(G)=2$ in Lemma \ref{lemma:g1}.  The expected gonality of $G\cart G$ is $\gon(G)\cdot |V(G)|=2\cdot 3=6$.  However, Figure \ref{figure:counterexample} illustrates three equivalent effective divisors of degree $5$ on $G\cart G$.  Since between the three divisors there is a chip on each vertex, any $-1$ debt can be eliminated wherever it is placed, so $G\cart G$ has a degree $5$ divisor of positive rank, and thus $\gon(G\cart G)\leq 5$.  In Proposition \ref{prop:arbitrarily_large} and Example \ref{example:simple}, we will see that the gap between gonality and expected gonality can in fact be arbitrarily large, both when considering simple and non-simple graphs.

\begin{figure}[hbt]
   		 \centering
\includegraphics[scale=1]{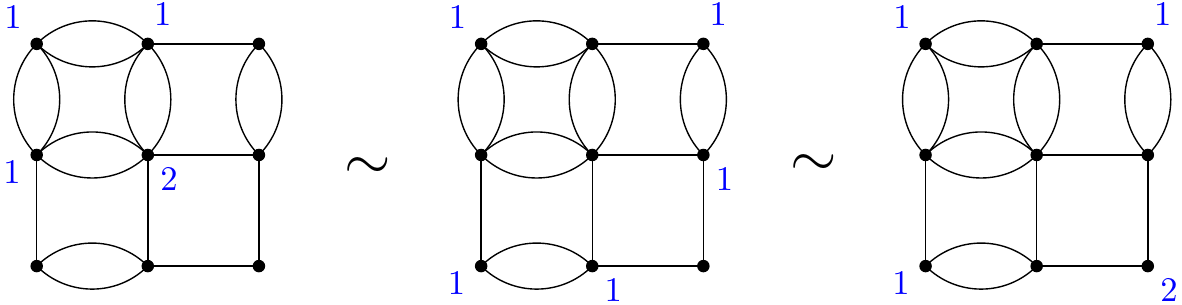}
	\caption{A positive rank divisor on $G\cart G$ with lower degree than expected}
	\label{figure:counterexample}
\end{figure}

Our paper is organized as follows.  In Section \ref{section:upperbound} we establish background and conventions and prove Proposition \ref{prop:upperbound}.  In Section \ref{section:evidence} we provide old and new instances where the equation in Question \ref{conjecture:product} is satisfied. In Section \ref{section:gonalityconjecture} we prove Theorem \ref{theorem:main}. We close in Section \ref{section:equality} by  determining when the gonality of a nontrivial product is equal to $\floor{(g+3)/2}$ in Theorem \ref{theorem:equality}.  It turns out that there are only finitely many such product graphs, $12$ simple and $11$ non-simple.

\section{Background and a proof of the upper bound}
\label{section:upperbound}

The main goal of this section is to prove our upper bound on $\textrm{gon}(G\boxempty H)$.  Before we do so we establish some definitions and notation.

Throughout this paper, a \emph{graph} is a connected multigraph, where we allow multiple edges between two vertices, but not edges from a vertex to itself.  We write $G=(V,E)$, where $V=V(G)$ is the set of vertices and $E=E(G)$ is the multiset of edges.  If every pair of vertices has at most one edge connecting them, we call $G$ \emph{simple}.
For any vertex $v\in V(G)$, the \emph{valence} of $v$, denoted $\textrm{val}(v)$, is the number of edges incident to $v$.  The \emph{genus} of $G$, denoted $g(G)$, is defined to be $|E|-|V|+1$.  Given two graphs $G=(V_1,E_1)$ and $H=(V_2,E_2)$, their \emph{Cartesian product} $G\boxempty H$ is the graph with vertex set $V_1\times V_2$, and $e$ edges connecting $(v_1,v_2)$ and $(w_1,w_2)$ if $v_1=w_1$ and $v_2$ is connected to $w_2$ in $H$ by $e$ edges, or if $v_2=w_2$ and $v_1$ is connected to $w_1$ in $G$ by $e$ edges.  A graph is called a \emph{non-trivial product} if it is of the form $G\boxempty H$, where $G$ and $H$ are graphs with at least two vertices each. The graph $G\boxempty H$ has $|V_1|\cdot|V_2|$ vertices and $|E_1|\cdot|V_2|+|E_2|\cdot|V_1|$ edges, so $g(G\boxempty H)=|E_1|\cdot|V_2|+|E_2|\cdot|V_1|-|V_1|\cdot|V_2|+1$.  An example of a product graph is illustrated in Figure \ref{figure:product_graph}.  This is the Cartesian product of the star tree $T$ with four vertices and the complete graph on $3$ vertices $K_3$.  There are three natural copies of $T$, one for each vertex of $K_3$; and there are four natural copies of $K_3$, one for each vertex of $T$.

\begin{figure}[hbt]
   		 \centering
\includegraphics[scale=1]{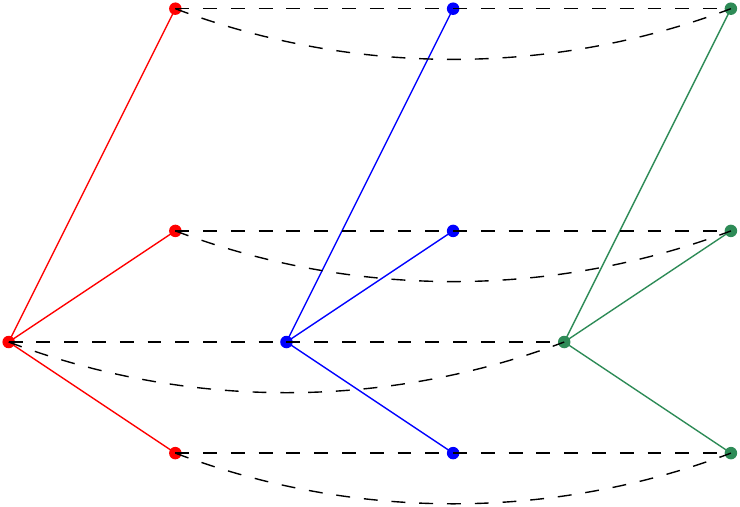}
	\caption{The Cartesian product of a tree with $K_3$}
	\label{figure:product_graph}
\end{figure}

A \emph{divisor} on a graph $G$ is a formal $\mathbb{Z}$-linear sum of the vertices of $G$:
$$\sum_{v\in V}a_v(v),\,\,\,\,a_v\in\mathbb{Z}.$$  The set of all divisors on a graph forms an abelian group, namely the free abelian group generated by the vertices of the graph. 
 The \emph{degree} of a divisor is the sum of the coefficients:
\[\deg\left(\sum_{v\in V}a_v(v)\right)=\sum_{v\in V}a_v.\]  In the language of chip configurations, the degree is the total number of chips present on the graph.  We say that a divisor is \emph{effective} if $a_v\geq 0$ for all $v\in V$, i.e. if no vertex is in debt. 

A \emph{chip-firing move} changes one divisor to another by \emph{firing} a vertex, causing it to donate chips to each neighboring vertex, one for each edge connecting the two vertices.  We say that two divisors are \emph{equivalent} to one another if they differ by a sequence of chip-firing moves, and write $D\sim D'$ if $D$ and $D'$ are equivalent divisors.

Let $D$ be a divisor on a graph $G$.  The \emph{rank} $r(D)$ of $D$ is the largest integer $r\geq 0$ such that, for all effective divisors $F$ of degree $r$, $D-F$ is equivalent to an effective divisor.  (If such an $r$ doesn't exist, we set $r(D)=-1$.) Note that if $D$ has non-negative rank, then it is equivalent to an effective divisor.  The theory of divisors on graphs mirrors the theory of divisors on algebraic curves, as illustrated in the following result.

\begin{theorem}
[The Riemann-Roch Theorem for graphs, \cite{bn}] \label{theorem:rr}

Let $D$ be a divisor on a graph $G$, and let $K$ be the divisor with $\textrm{val}(v)-2$ chips on each vertex $v$ of $G$.  Then
\[r(D)-r(K-D)=\deg(D)-g(G)+1.\]
\end{theorem}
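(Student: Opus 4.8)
The statement is the Riemann--Roch theorem for graphs of Baker and Norine, and I would reproduce the structure of their combinatorial proof. The first move is a formal reduction to a single inequality. Since $\deg K=\sum_{v}(\val(v)-2)=2\abs{E}-2\abs{V}=2g-2$, where $g=g(G)$, the quantity
\[ f(D)=r(D)-r(K-D)-\deg(D)+g-1 \]
satisfies $f(K-D)=-f(D)$ by direct substitution. Hence it suffices to prove $f(D)\geq 0$ for every divisor $D$; applying this to $K-D$ and using the antisymmetry then forces $f\equiv 0$, which is the theorem.

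The whole argument runs on the theory of reduced divisors, which I would develop first: fixing a vertex $q$, every divisor is equivalent to a unique $q$-reduced divisor, computable by Dhar's burning algorithm, and $D$ is equivalent to an effective divisor (equivalently $r(D)\geq 0$) exactly when its $q$-reduced representative is nonnegative at $q$. Unwinding the definition of rank gives the effectivity formula
\[ r(D)+1=\min\set*{\deg(E) : E\geq 0 \text{ and } r(D-E)=-1}, \]
since $r(D)$ drops below $k$ precisely when some effective divisor of degree $k$ cannot be subtracted from $D$ while staying equivalent to an effective divisor.

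The combinatorial heart is a distinguished divisor of degree $g-1$ and rank $-1$. For an acyclic orientation $\mathcal{O}$ of $G$ with a unique source $q$, set $\nu(v)=(\text{indegree of } v \text{ in } \mathcal{O})-1$; then $\deg(\nu)=\abs{E}-\abs{V}=g-1$, and a short reduced-divisor argument (every subset of $V\setminus\{q\}$ contains a vertex that is a local source, hence has more out-of-set edges than its $\nu$-value) shows $\nu$ is $q$-reduced with $\nu(q)=-1$, so $r(\nu)=-1$. Reversing the orientation gives $K-\nu$, of the same type, so one may arrange $r(\nu)=r(K-\nu)=-1$. The key lemma I would then prove is the dichotomy: for such a $\nu$ and every divisor $D$, \emph{exactly one} of $r(D)\geq 0$ or $r(\nu-D)\geq 0$ holds. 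That at most one holds is immediate, for otherwise $\nu$ would be equivalent to a sum of two effective divisors, contradicting $r(\nu)=-1$; the existence of at least one is the substance, established by running Dhar's algorithm on the $q$-reduced form of $D$ to manufacture the required effective divisor. This existence half, together with the verification that $\nu$ and $K-\nu$ are simultaneously of this special type, is the main obstacle.

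Granting the lemma for both $\nu$ and $K-\nu$, the identity drops out cleanly. Writing $a(M)=\min\set*{\deg(E):E\geq 0,\ r(M+E)\geq 0}$, the effectivity formula and the dichotomy for $\nu$ give $r(D)+1=a(\nu-D)$, while the dichotomy for $K-\nu$ gives $r(K-D)+1=a(D-\nu)$; hence, with $M=\nu-D$,
\[ r(D)-r(K-D)=a(M)-a(-M). \]
Finally there is a cost-free inequality $a(-M)\leq a(M)+\deg(M)$: if $M+E\sim F$ with $F\geq 0$ and $\deg(E)=a(M)$, then $-M+F\sim E\geq 0$, so $F$ is an admissible competitor of degree $a(M)+\deg(M)$. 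This yields $r(D)-r(K-D)\geq -\deg(M)=\deg(D)-g+1$, i.e.\ $f(D)\geq 0$, completing the reduction and hence the proof.
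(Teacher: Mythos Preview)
The paper does not prove this theorem at all; it is stated as a cited result from Baker and Norine \cite{bn}, so there is no in-paper proof to compare against. Your sketch is a faithful and correct outline of the original Baker--Norine argument: the antisymmetry reduction to $f(D)\geq 0$, the reduced-divisor machinery, the degree-$(g-1)$ divisor $\nu$ from an acyclic orientation with unique source, the exact-one-of-two dichotomy for $\nu$ and $K-\nu$, and the final $a(M)-a(-M)$ bookkeeping all match their proof, with the acknowledged hard step being the ``at least one'' direction of the dichotomy.
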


The \emph{gonality} $\textrm{gon}(G)$ of a graph $G$ is the smallest degree of a divisor of positive rank.  Note that there always exists an effective divisor $D$ with $r(D)=\gon(G)$, since any divisor of non-negative rank is equivalent to an effective divisor. We can also define gonality in terms of a chip-firing game:  Player 1 places $k$ chips on the graph (for some $k$), and then Player 2 places $-1$ chips on the graph. If Player 1 can perform chip-firing moves to eliminate all debt from the graph, they win; otherwise, Player 2 wins.  The gonality of the graph is then the minimum $k$ such that Player 1 has a winning strategy.  For this reason, we refer to a divisor of positive rank as a \emph{winning divisor}.

As an application of the Riemann-Roch Theorem for graphs, we determine the gonality of any genus $1$ graph.

\begin{lemma}\label{lemma:g1}
If $g(G)=1$, then $\gon(G)=2$.
\end{lemma}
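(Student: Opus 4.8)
The plan is to establish the two inequalities $\gon(G) \le 2$ and $\gon(G) \ge 2$ separately, using the Riemann--Roch theorem for the upper bound and the tree characterization for the lower bound. For the lower bound, recall that by \cite[Lemma 1.1]{bn2}, a graph has gonality $1$ if and only if it is a tree, and a tree has genus $0$; since $g(G) = 1 \neq 0$, the graph $G$ is not a tree, so $\gon(G) \ge 2$. (One should also note $\gon(G) \ge 1$ always holds for a graph with at least one vertex, so gonality is well-defined and positive here; genus $1$ forces at least one edge hence at least one vertex.)

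For the upper bound, I would exhibit a divisor of degree $2$ and positive rank. The natural candidate is the canonical divisor $K$, which by Theorem \ref{theorem:rr} has $\val(v) - 2$ chips on each vertex and degree $\deg(K) = 2g(G) - 2 = 0$. Since $\deg K = 0$ in genus $1$, $K$ itself has degree $0$, which is too small, so instead I would apply Riemann--Roch to a well-chosen degree $2$ divisor $D$. Take $D$ to be any effective divisor of degree $2$ (for instance $2(v)$ for some vertex $v$, or $(v) + (w)$). Then $K - D$ has degree $0 - 2 = -2 < 0$, so $K - D$ cannot be equivalent to an effective divisor, giving $r(K - D) = -1$. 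Plugging into Riemann--Roch:
\[
r(D) - r(K - D) = \deg(D) - g(G) + 1 = 2 - 1 + 1 = 2,
\]
so $r(D) = 2 + r(K-D) = 2 + (-1) = 1$. Thus $D$ has positive rank and degree $2$, whence $\gon(G) \le 2$. Combining with the lower bound gives $\gon(G) = 2$.

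I do not expect any serious obstacle here; the only things to be careful about are (i) confirming $\deg(K) = 2g - 2 = 0$, which follows from the handshake lemma since $\sum_v (\val(v) - 2) = 2|E| - 2|V| = 2(g - 1) = 0$ when $g = 1$; and (ii) making sure the degree $2$ effective divisor $D$ actually exists, which it does as soon as $G$ has a vertex, guaranteed by $g(G) = 1$. The argument is essentially a one-line application of Riemann--Roch once the setup is in place.
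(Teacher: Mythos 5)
Your proposal is correct and follows essentially the same route as the paper: Riemann--Roch applied to a degree $2$ divisor for the upper bound (the paper simply uses $r(K-D)\geq -1$ rather than pinning down $r(K-D)=-1$ via $\deg(K-D)<0$, but this is immaterial), and the tree characterization of gonality $1$ for the lower bound.
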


\begin{proof}
Let $G$ have genus $1$, and let $D$ be a divisor of degree $2$ on $G$.  Then
\[r(D)-r(K-D)=\deg(D)-g(G)+1=2-1+1=2.\]
Since $r(K-D)\geq -1$, we have $r(D)= 2+r(K-D)\geq 1$.  Thus $\gon(G)\leq 2$.  Since $G$ is not a tree, we have $\gon(G)>1$ by \cite[Lemma 1.1]{bn2}, so $\gon(G)=2$.
\end{proof}

To prove that the gonality of a graph is at most an integer $k$, it suffices to exhibit a divisor on $G$ of degree $k$ such that no matter where an opponent places a $-1$, debt may be eliminated from the graph via chip-firing.  It is this strategy we will use to prove Proposition \ref{prop:upperbound}.  (Providing a lower bound on the gonality of a graph is much more difficult, though some methods are available, as described in Section \ref{section:evidence}.)

\begin{proof}[Proof of Proposition \ref{prop:upperbound}]  We will show that there is a winning divisor $D$ on $G\boxempty H$ with $\deg(D)=\gon(G)\cdot\abs{V(H)}$, implying that $\gon(G \boxempty H)\leq \gon(G)\cdot\abs{V(H)}$.  By symmetry, we will also have $\gon(G \boxempty H)\leq \gon(H)\cdot\abs{V(G)}$

Let $F=\sum_{v\in V(G)} b_v(v)$ be a divisor on $G$ with $r(F)>0$ and $\deg(F)=\gon(G)$.  Let 
\[D=\sum_{(v,w)\in V(G)\times V(H)}a_{(v,w)}(v,w)\] be the divisor on $G\boxempty H$ defined by $a_{(v,w)}=b_v$ for all $v\in V(G)$ and $w\in V(H)$.  Note that the degree of $D$ is
\[\sum_{(v,w)\in V(G)\times V(H)} a_{(v,w)}=\sum_{w\in V(H)}\left(\sum_{v\in V(G)} b_v\right)=\sum_{w\in V(H)}\deg(F)=|V(H)|\cdot\deg(F).\]
In other words, $\deg(D)=\textrm{gon}(G)\cdot |V(H)|$.

To see that $D$ is a winning divisor, suppose the $-1$ chip is placed on the vertex $(v,w)$.  We may perform chip-firing moves on the copy $G \boxempty \{w\}$ as if we were playing on $G$ by doing the following: each time we would fire a vertex $v' \in G$, instead fire each vertex of the form $(v',u)$ where $u \in H$.  Since $F$ is a winning divisor on $G$, there is some sequence of chip-firing moves that removes all the debt from $G$, and so this substitution of chip-firing moves furnishes a sequence of chip-firing moves on $G \boxempty H$ that removes all the debt from $G \boxempty \{w\}$, and hence from all of $G \boxempty H$.
Thus, $D$ is a winning divisor, and $\gon(G \boxempty H)\leq \gon(G)\cdot\abs{V(H)}$.  By symmetry we conclude that
\[\gon(G \boxempty H) \leq \min\set{\gon(G)\cdot\abs{V(H)},\gon(H)\cdot\abs{V(G)}}.\]
\end{proof}

\begin{figure}[hbt]
   		 \centering
\includegraphics[scale=0.8]{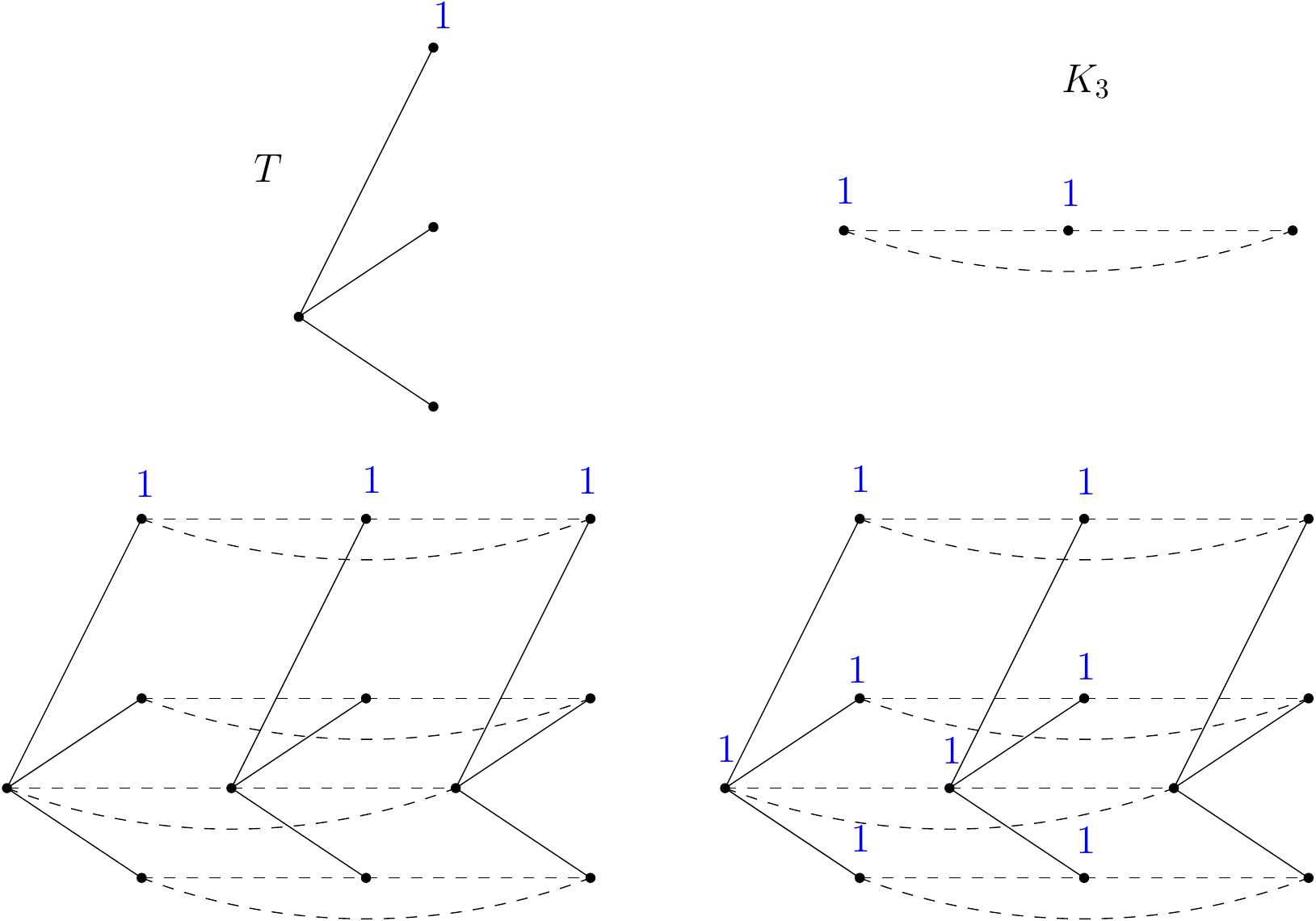}
	\caption{Positive rank divisors on $T$ and $K_3$, each of which yields a positive rank divisor on $T\boxempty K_3$}
	\label{figure:winning_divisor_on_products}
\end{figure}

The construction of $D$ from this proof is illustrated in Figure \ref{figure:winning_divisor_on_products} for the product of a tree $T$ with the complete graph $K_3$.  The top left illustrates a positive rank divisor on $T$ with degree equal to the gonality of $T$; we can build a positive rank divisor on $T\boxempty K_3$ by placing the same chips on each copy of $T$, as illustrated on the bottom left.  Since the number of copies of $T$ is equal to the number of vertices of $K_3$, the divisor on $T\boxempty K_3$ has degree $\textrm{gon}(T)\cdot |V(K_3)|=1\cdot 3=3$.  Similarly on the right we have a positive rank divisor on $K_3$ with degree equal to $\textrm{gon}(K_3)$, yielding a positive rank divisor on $T\boxempty K_3$ of degree $\textrm{gon}(K_3)\cdot |V(T)|=2\cdot 4=6$.  Both these divisors provide an upper bound on $\textrm{gon}(T\boxempty K_3)$, so $\textrm{gon}(T\boxempty K_3)\leq \min\{3,6\}=3$.  (It will follow from Proposition \ref{prop:tree_complete} that in fact $\textrm{gon}(T\boxempty K_3)=3$.)

Some graph products have gonality strictly smaller than the upper bound in Proposition \ref{prop:upperbound}. Indeed, the gap between gonality and expected gonality can be arbitrarily large, as shown in the following result.

\begin{proposition}\label{prop:arbitrarily_large}
For any $n\geq 2$, there exist a product graph $G\cart H$ with \[\min\{\textrm{gon}(G)\cdot |V(H)|,\textrm{gon}(H)\cdot |V(G)|\}-\gon(G\cart H)\geq n-1.\]
\end{proposition}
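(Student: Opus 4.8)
The plan is to construct, for each $n \geq 2$, an explicit family of product graphs where the gap is at least $n-1$. The natural candidate is to take the example already hinted at in the introduction: let $G$ be the genus-$1$ graph on three vertices $v_1, v_2, v_3$ with edge multiset $\{v_1v_2, v_1v_2, v_2v_3\}$, for which $\gon(G) = 2$ by Lemma \ref{lemma:g1}. More generally, I would take $H$ to be a path or similarly simple graph and study $G \cart H$ as $H$ grows, or iterate the product $G \cart G \cart \cdots \cart G$; but the cleanest approach is probably to fix one factor and let the other be a tree on many vertices, or to take products of the small genus-$1$ graph with itself and track how the degree-$5$ winning divisor behaves under further products. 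The key point is that the expected gonality $\min\{\gon(G)\cdot|V(H)|, \gon(H)\cdot|V(G)|\}$ grows linearly in the size of the second factor, while a clever winning divisor — built by ``spreading'' the chips as in Figure \ref{figure:counterexample} — can save a fixed fraction at each copy.

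Concretely, first I would establish the base case: exhibit on $G \cart G$ (where $G$ is the three-vertex genus-$1$ graph) a winning divisor of degree $5$, so that the gap is $6 - 5 = 1 = n-1$ for $n = 2$. This is exactly the configuration displayed in Figure \ref{figure:counterexample}: three equivalent effective divisors of degree $5$ whose supports cover every vertex, so that any single unit of debt placed anywhere can be cancelled. Then, to get an arbitrarily large gap, I would iterate: consider $G^{\cart n}$, the $n$-fold Cartesian product, or alternatively $G \cart T_n$ for a suitable tree $T_n$. The main idea is that each time we multiply by another copy of $G$, we can recursively transport the degree-$5$ savings structure, so that the gap accumulates. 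Specifically, if on $G \cart X$ we have a winning divisor saving $k$ chips over the expected value, then on $G \cart G \cart X$ we should be able to arrange (using the chip-spreading trick on the new $G$-factor, applied fiberwise) a winning divisor saving roughly $k + 1$ chips, by combining the inductive divisor on fibers with the degree-$5$ trick on the base $G \cart G$.

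The key steps in order: (1) verify the degree-$5$ winning divisor on $G \cart G$, checking that the three depicted divisors are genuinely equivalent via explicit chip-firing moves and that their supports cover $V(G \cart G)$; (2) set up the inductive family — I expect $G \cart G \cart \cdots$ ($n$ copies of the small graph, or $n-1$ copies times one more factor) with expected gonality $2 \cdot 3^{n-1}$; (3) build the winning divisor on the $n$-fold product by induction, at each stage applying the covering/spreading argument fiber-by-fiber over the new factor, and show the degree drops by at least $1$ beyond the naive $3 \times$ scaling; (4) conclude that the gap is at least $n-1$. The main obstacle I anticipate is step (3): verifying that the recursive construction really produces a positive-rank (winning) divisor, i.e.\ that for \emph{every} placement of $-1$ debt the combined chip-firing strategy succeeds. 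One has to be careful that firing moves used to cancel debt in one fiber do not destroy the careful balance in neighboring fibers; the cleanest way to handle this is probably to keep the winning divisor of the form ``same configuration on every fiber'' as in the proof of Proposition \ref{prop:upperbound}, and localize all the debt-cancellation to a single fiber isomorphic to the previous product, only using the spreading trick to reduce the per-fiber degree. Making this localization rigorous — showing the saved chip in one fiber can always be routed to wherever the debt lands — is the technical heart of the argument.
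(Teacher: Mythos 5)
There is a genuine gap here, and it is structural rather than just a matter of unfinished details. Your plan measures the accumulated savings against the benchmark $\gon(G)\cdot|V(G^{\cart (n-1)})| = 2\cdot 3^{n-1}$, but the proposition requires a gap against $\min\{\gon(G)\cdot|V(H)|,\gon(H)\cdot|V(G)|\}$ for a two-factor product $G\cart H$. Writing $G^{\cart n}=G\cart X$ with $X=G^{\cart(n-1)}$, that minimum is $\min\{2\cdot 3^{n-1},\,3\gon(X)\}=3\gon(X)$, and your own induction makes $\gon(X)$ strictly smaller than $2\cdot 3^{n-2}$ --- indeed you have no lower bound on $\gon(X)$ at all, so you cannot bound $3\gon(X)-\gon(G\cart X)$ from below. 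The benchmark moves down with you, and saving one chip per factor against the wrong benchmark does not yield a gap of $n-1$. The fallback $G\cart T_n$ with $T_n$ a tree fails outright: there the expected gonality is $\min\{2|V(T_n)|,\,1\cdot 3\}=3$, so the gap is bounded by a constant. Separately, even the inductive step you flag as the ``technical heart'' is unsupported: the degree-$5$ saving on $B_{2,1}\cart B_{2,1}$ exploits the specific shape of that divisor, and there is no argument that an arbitrary minimal winning divisor on $X$ admits a fiberwise ``spread'' saving even one chip when producted with $G$.

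The paper avoids all of this by building a fresh \emph{two-factor} example for each $n$: a multigraph $G_n$ on $n+1$ vertices ($n$ parallel edges between consecutive vertices $v_i,v_{i+1}$ for $i\le n-1$, and a single edge $v_nv_{n+1}$) whose gonality is pinned down \emph{exactly} as $n$ (both bounds are easy for this small graph), so the expected gonality of $G_n\cart G_n$ is exactly $n(n+1)$. It then exhibits one explicit divisor of degree $n^2+1$ (chips on an $n\times n$ corner plus one extra) with two further equivalent effective divisors whose supports jointly cover every vertex, giving positive rank and hence a gap of $n-1$ in one step. If you want to salvage an iterative construction, you would need exact (or at least lower-bounded) gonalities for every intermediate factor, which is precisely the hard direction; the paper's choice of a factor whose gonality is computable exactly is what makes the argument close.
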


\begin{proof}
Given $n\geq 2$, construct a graph $G$ as follows.  Let $G$ have $n+1$ vertices $v_1,\cdots,v_n,v_{n+1}$, where $v_i$ and $v_{i+1}$ are connected by $n$ edges for $1\leq i\leq n-1$, and where $v_n$ and $v_{n+1}$ are connected by $1$ edge.  We claim that $\gon(G)=n$.  Certainly $(v_1)+\cdots+(v_{n-1})+(v_n)$ is a divisor of positive rank:  the only vertex on which $-1$ chips could be placed to introduce debt is $v_{n+1}$, and since $(v_n)\sim (v_{n+1})$ we have $(v_1)+\cdots+(v_{n-1})+(v_n)-(v_{n+1})\sim (v_1)+\cdots+(v_{n-1})$.  On the other hand, there exists no effective positive rank divisor of degree $n-1$, since with so few chips no chips could be moved between any two vertices, save for $v_{n}$ and $v_{n+1}$; and with $n-1$ chips, at least one of $v_1,\cdots,v_{n-1}$ and the pair $\{v_n,v_{n+1}\}$ would not have a chip, and so placing $-1$ chips there creates debt that cannot be eliminated.

Since $\gon(G)=n$ and $|V(G)|=n+1$, the expected gonality of $G\cart G$ is $n(n+1)$.  We now present a divisor of degree $n^2+1$, namely
\[D=(v_{n,n})+\sum_{1\leq i,j\leq n}(v_{i,j}).\]
Thinking of $G\cart G$ as an $(n+1)\times (n+1)$ grid, $D$ places one chip on each vertex of the upper left $n\times n$ corner, except on $v_{n,n}$, where it places two chips.  We claim that $r(D)>0$.  To see this, consider firing all vertices $v_{i,j}$ where $i,j\leq n$.  Most of these chip-firing moves cancel, and the net effect is that for all $i\leq n$, a chip moves from the vertex $v_{i,n}$ to the vertex $v_{i,n+1}$, and a chip moves from the vertex $v_{n,i}$ to the vertex $v_{n+1,i}$.  Call this new divisor $D'$.  Then consider firing every vertex except for $v_{n+1,n+1}$; this transforms $D'$ into $D''$, and moves chips from the vertices $v_{n,n+1}$ and $v_{n+1,n}$ to $v_{n+1,n+1}$. These three divisors are illustrated for the case of $n=4$ in Figure \ref{figure:larger_gaps}.

\begin{figure}[hbt]
   		 \centering
\includegraphics[scale=0.8]{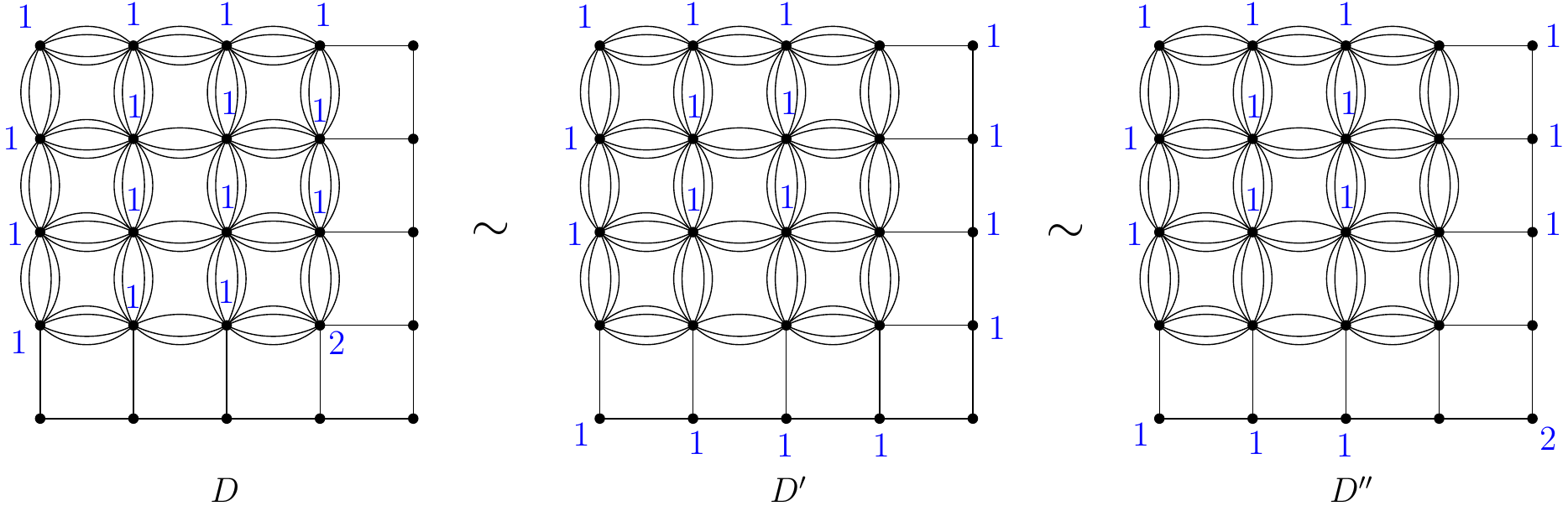}
	\caption{The divisors $D$, $D'$, and $D''$ when $n=4$}
	\label{figure:larger_gaps}
\end{figure}

To see that $r(D)>1$ note that for any vertex $v\in G\cart G$, we have that at least one of $D$, $D'$, and $D''$ places at least one chip on $v$, and that $D$, $D'$, and $D''$ are all effective.  Thus at least one of $D-(v)$, $D'-(v)$, or $D''-(v)$ is effective.  Since all of these are equivalent to $D-(v)$, we conclude that $r(D)>0$.  This means that $\gon(G\cart G)\leq \deg(D)=n^2+1$.  We then have
\[\min\{\textrm{gon}(G)\cdot |V(G)|,\textrm{gon}(G)\cdot |V(G)|\}-\gon(G\cart G)\geq n^2+n-(n^2+1)= n-1,\]
as desired.
\end{proof}

This proof constructed examples of non-simple graph products with lower gonality than expected.  The following example does the same for simple graph products.

\begin{example}\label{example:simple}
Construct $G$ as follows:  start with the complete graph $K_4$, and add four vertices in a path to one vertex.  This graph with labelled vertices, along with a rank one divisor of degree $3$, is pictured on the left in Figure \ref{figure:g_and_g_squared}.  Note that $\gon(G)=3$:  the rank $1$ divisor gives an upper bound, and since $G$ has $K_4$ as a minor it has treewidth at least $3$, which serves as a lower bound on gonality (see Proposition \ref{ref:treewidth}).  The product $G\cart G$ is pictured on the right in Figure \ref{figure:g_and_g_squared}, and its expected gonality is $\gon(G)\cdot |V(G)|=3\cdot 8=24$.

\begin{figure}[hbt]
   		 \centering
\includegraphics[scale=0.8]{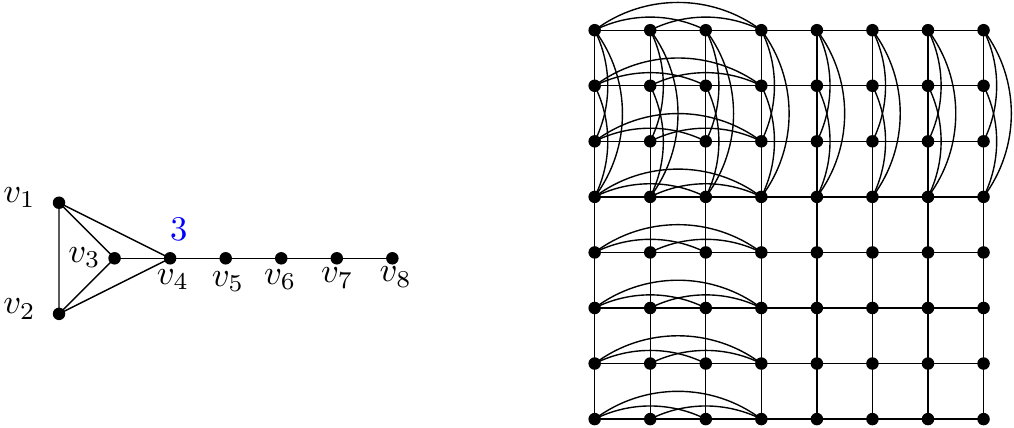}
	\caption{The graph $G$ from Example \ref{example:simple} with a rank $1$ divisor, along with $G\cart G$}
	\label{figure:g_and_g_squared}
\end{figure}

However, the gonality of $G\cart G$ is smaller than $24$.  Let $v_{i,j}$ refer to the vertex $(v_i,v_j)$, and consider
\[D=(v_{4,4})+\sum_{1\leq i, j\leq 4}(v_{i,j})+2(v_{5,5})+2(v_{6,6})+2(v_{7,7}).\]
This divisor is illustrated in the upper left of Figure \ref{figure:counterexample_simple}, where a dotted box indicates $1$ chip on each enclosed vertex.  Note that $\deg(D)=23$.  Chip-firing the upper left $4\times 4$ square of vertices, then the $5\times 5$, then the $6\times 6$, then the $7\times 7$, and finally all vertices but $v_{8,8}$ transforms $D$ iteratively into the other five divisors in Figure \ref{figure:counterexample_simple}.  All six divisors are effective, and together cover each vertex.  As in the previous proof, this implies that $r(D)>0$, so $\gon(G\cart G)\leq 23$, even though the expected gonality is $24$.

\begin{figure}[hbt]
   		 \centering
\includegraphics[scale=0.8]{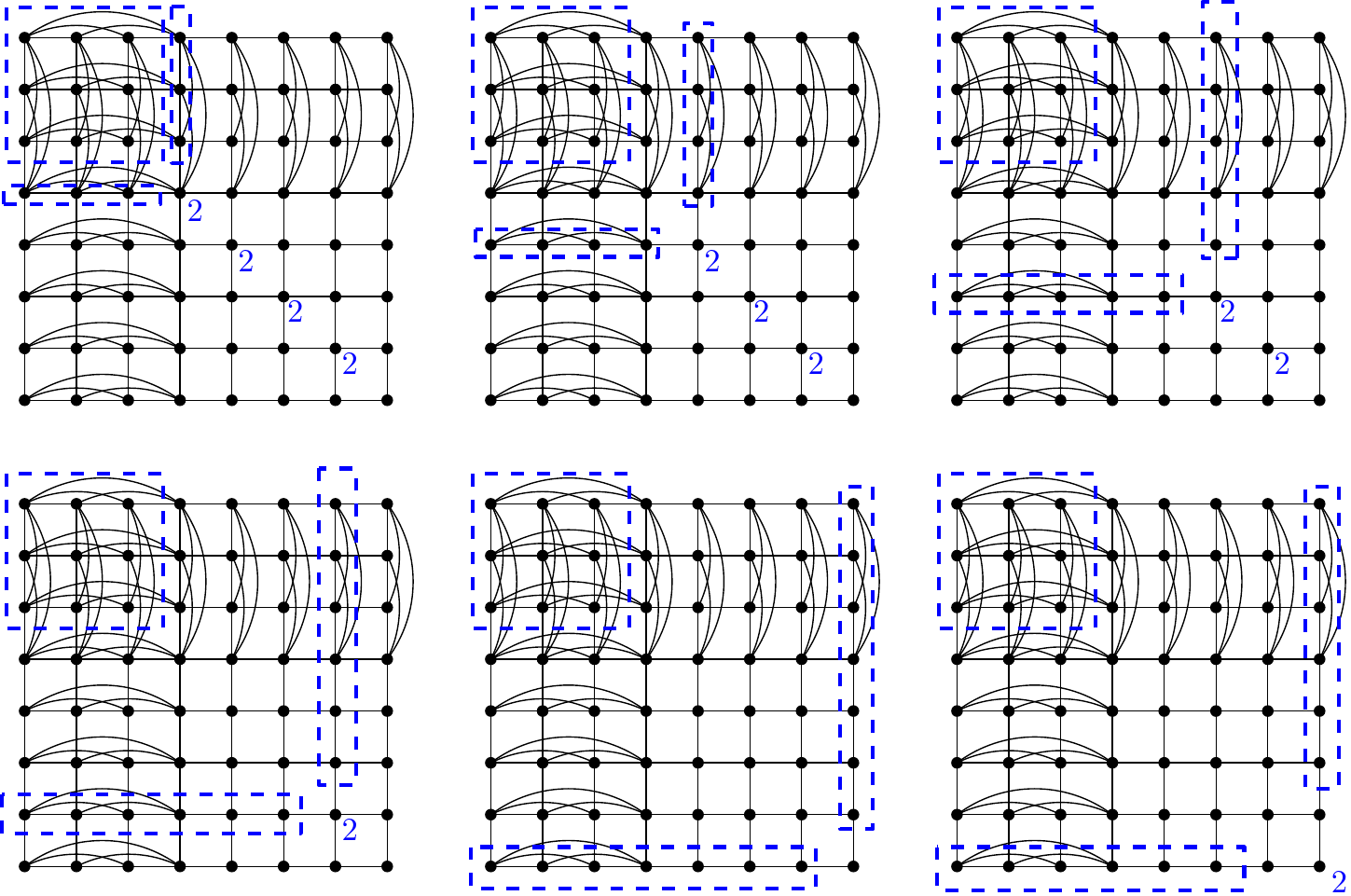}
	\caption{The divisor $D$, and five equivalent divisors; all vertices within a dotted box receive one chip each}
	\label{figure:counterexample_simple}
\end{figure}

This construction can be naturally generalized in two ways.  First, note that we can obtain arbitrarily large gaps between expected and actual gonality by changing $G$ to have more vertices on the path attached to $K_4$.  Each added vertex increases the expected gonality by $3$, but we can get away with adding just $2$ chips to our divisor along the diagonal.

Second, in our construction of $G$ we can replace $K_4$ with any graph $H$ of gonality $\gamma\geq 3$; attaching a path of vertices will not change the gonality.  If $H$ has $m$ vertices and we attach $n$ vertices in a path to obtain $G$, then the expected gonality of $G\cart G$ is $(m+n)\gamma$.  However, mirroring the $D$ construction above gives us a positive rank divisor of degree $m^2+1+2(n-1)=m^2+2n-1$.  Since $\gamma\geq 3$, we can choose $n$ sufficiently large so that the gap $(m+n)\gamma-(m^2+2n-1)=m(\gamma-m)+n(\gamma-2)+1$ is as large as we want.

\end{example}

\section{Graph products with expected gonality}
\label{section:evidence}

In most cases where the gonality of a graph product is known, the inequality from Proposition \ref{prop:upperbound} is in fact an equality.  Let $P_n$ denote the path on $n$ vertices, and let $C_n$ denote the cycle on $n$ vertices.  Since $P_n$ is a tree, $\gon(P_n)=1$; and by Lemma \ref{lemma:g1}, we have $\gon(C_n)=2$.

\begin{itemize}

    \item The \emph{grid graph} $G_{m,n}$ is the product $P_m\boxempty P_n$ of two path graphs. The grid graph has gonality $\min\{m,n\}$ \cite{db}, which is equal to $\min\{|V(P_m)|\gon(P_n),|V(P_n)|\gon(P_m)\}$.
    
    \item The \emph{stacked prism graph} $Y_{m,n}$ is the product $C_m\boxempty P_n$.  For $m\neq 2n$, this graph is known to have gonality $\min\{m,2n\}$ \cite{treewidth}, which is equal to $\min\{|V(C_m)|\gon(P_n),|V(P_n)|\gon(C_m)\}$.
    
    \item  The \emph{toroidal grid graph}  $T_{m,n}$ is the product $C_m\boxempty C_n$ of two cycle graphs.  When $|m-n|\geq 2$, the graph $T_{m,n}$ has gonality $2\min\{m,n\}$ \cite{treewidth}, which is equal to $\min\{|V(C_m)|\gon(C_n),|V(C_n)|\gon(C_m)\}$.
\end{itemize}
In this section we provide some additional instances of $G\cart H$ that satisfy the equation in Question \ref{conjecture:product} by proving that graphs of the form $T\boxempty T'$ and $K_n\boxempty T$ have the expected gonality, where $K_n$ is the complete graph on $n$ vertices and $T$ and $T'$ are trees.  We then prove that the same holds for $K_m\boxempty K_n$ where $\min\{m,n\}\leq 5$, and for graphs of the form $G\cart K_2$ where $g(G)=1$.

The first two arguments involve the  \emph{treewidth} of a graph. We refer the reader to \cite{st} for the definition of treewidth.
In \cite{vddbg}, it was shown that  the {treewidth} of a graph $G$, written $\textrm{tw}(G)$, is a lower bound on its gonality:

\begin{proposition}[Theorem 2.1 in \cite{vddbg}]
\label{ref:treewidth}
For any graph $G$, $\text{gon}(G) \geq \text{tw}(G)$.
\end{proposition}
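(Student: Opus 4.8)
The plan is to convert a winning divisor on $G$ into an explicit tree decomposition of width at most $\gon(G)$, using the theory of reduced divisors from \cite{bn}. Recall that for each vertex $q$ and each divisor class there is a unique \emph{$q$-reduced} representative $D_q$, characterized by $D_q(v)\ge 0$ for all $v\ne q$ together with the superstability condition that every nonempty $A\subseteq V\setminus\set{q}$ contains a vertex $v$ with $D_q(v)$ strictly smaller than the number of edges from $v$ leaving $A$; moreover $D_q$ can be produced from any representative by Dhar's burning algorithm. I would start with an effective divisor $D$ satisfying $r(D)\ge 1$ and $\deg(D)=\gon(G)$, which exists by the definition of gonality. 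The positive-rank hypothesis says that $D-(q)$ is equivalent to an effective divisor for every $q$; since $D_q$ is $q$-reduced and $D_q-(q)\sim D-(q)$, the standard behaviour of reduced divisors forces $D_q(q)\ge 1$. Thus $q\in\supp(D_q)$, and, $D_q$ being effective, $\abs{\supp(D_q)}\le\deg(D_q)=\gon(G)$.

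Next I would track the firing data: write $D_q=D-Lf_q$, where $L$ is the graph Laplacian and $f_q\colon V\to\Z_{\ge 0}$ records how many times each vertex fires in reducing $D$ toward $q$ (so $f_q(q)=0$). The technical core is a monotonicity principle for Dhar's algorithm: moving the base vertex across a single edge $qq'$ changes $f_q$ into $f_{q'}$ by adding the indicator of a set of vertices that ``spills'' toward $q'$, and these spill sets grow monotonically as the base vertex is moved farther away along a path. Consequently the supports $\supp(D_q)$ vary in a controlled, essentially one-vertex-at-a-time, fashion as $q$ ranges over $V$ (and over the intermediate configurations produced during reduction).

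Finally I would assemble the decomposition: take as underlying tree a structure built on the reduced divisors $\{D_q\}$, refined by the intermediate firing steps, and attach to each node the set $\supp(D_q)$ enlarged by at most the single vertex about to receive a spilled chip, so that every bag has size at most $\gon(G)+1$. Then check the three axioms. Every vertex $v$ lies in the bag at its own node. Every edge $uv$ of $G$ lies in a common bag, because superstability of $D_u$ applied to a vertex set split by the edge $uv$ keeps both endpoints present near that node. And for each fixed vertex the set of nodes whose bag contains it is connected, which is exactly the monotone growth of the spill sets from the previous step. This yields $\tw(G)\le\gon(G)$. The main obstacle is precisely this last bundle: the monotonicity principle for Dhar's algorithm under a changing base vertex, together with the bookkeeping needed to keep bags at size $\gon(G)+1$ while still securing the connectivity axiom, is the real content of \cite{vddbg}. (An alternative is to use the bramble characterization of treewidth: given a bramble one would pick a reduced divisor adapted to it and argue that its support is a hitting set, so $\gon(G)$ is at least the bramble order minus one — but making ``adapted'' precise meets the same combinatorial difficulty.)
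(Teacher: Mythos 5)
First, a point of reference: the paper does not prove this proposition at all --- it is imported verbatim as Theorem 2.1 of \cite{vddbg} --- so there is no in-paper argument to compare against, and your attempt has to stand on its own. As written it is an outline rather than a proof, and the gap is the one you name yourself: the ``monotonicity principle'' for Dhar's algorithm under a moving base vertex is asserted, not established, and you concede it is ``the real content of \cite{vddbg}.'' Everything checkable in the sketch is either standard (existence of $q$-reduced representatives, $D_q(q)\ge 1$ when $r(D)\ge 1$, hence $\abs{\supp(D_q)}\le\gon(G)$) or unverified. In particular the assembly of the tree decomposition is not a construction: you never say what the underlying tree is, and the two axioms you gesture at are precisely the ones that fail without the missing lemma. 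For the edge axiom, superstability of $D_u$ only guarantees that every set $A\not\ni u$ contains \emph{some} vertex with fewer chips than out-edges; it does not place both endpoints of a given edge of $G$ into $\supp(D_u)$, nor into any bag of size $\gon(G)+1$. For the connectivity axiom, already on a long cycle $C_n$ with $D=2(v_1)$ one computes $D_{v_q}=(v_q)+(v_{2-q})$ (indices mod $n$), so a fixed vertex $w$ lies in $\supp(D_{v_q})$ for exactly two, generally far-apart, base points $q$; whether the bags at the nodes between them contain $w$ depends entirely on the unspecified ``intermediate firing steps,'' and nothing in the sketch forces it. The auxiliary claim that $f_{q'}-f_q$ is the indicator of a single set whenever $qq'$ is an edge also needs proof: a priori it is only a nonnegative integer-valued function.

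For the record, the published proof in \cite{vddbg} does not build a tree decomposition; it runs through the Seymour--Thomas bramble characterization, the route you relegate to a parenthetical. Roughly: given a bramble $\mathcal{B}$ of order $\tw(G)+1$ and an effective positive-rank divisor $D$ of degree $\gon(G)$, one uses reduced divisors and Dhar's algorithm to make an extremal choice of an effective $D'\sim D$ such that $\supp(D')$ together with one additional vertex meets every element of $\mathcal{B}$, whence $\tw(G)+1\le\deg(D)+1$. That hitting-set argument replaces both the monotonicity bundle and the bag bookkeeping in your plan; if you want a self-contained proof, it is the argument to reconstruct. As it stands, your proposal does not yet constitute a proof.
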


One way to determine the treewidth of a graph is by use of \emph{brambles}.  A collection $\mathcal{B}=\{B_i\}$ of connected subgraphs of a graph $G$ is called a bramble if every pair of subgraphs $B_i$ and $B_j$ in $\mathcal{B}$ either intersect in a vertex, or contain two vertices that share an edge.  If $B_i\cap B_j$ is nonempty for all $i$ and $j$, then the bramble is called a \emph{strict bramble}.  The \emph{order} a bramble (or a strict bramble) $\mathcal{B}$, denoted $||\mathcal{B}||$, is the cardinality of the smallest collection of vertices $S\subset V(G)$ such that $S\cap B_i$ is nonempty for all $B_i\in V(G)$.  A famous theorem due to Seymour and Thomas says that the treewidth of a graph is equal to one less than the largest order of any bramble of that graph \cite{st}.  By \cite[\S 2]{treewidth}, treewidth is lower bounded by the maximum order of a strict bramble, meaning that the order of any strict bramble is a lower bound on gonality; an earlier proof that strict brambles provide a lower bound on gonality appears in \cite{db}.

\begin{proposition}\label{prop:tree_tree} If $T$ and $T'$ are trees with $m$ and $n$ vertices, respectively, then $\textrm{gon}(T\boxempty T')=\min\{m,n\}$.
\end{proposition}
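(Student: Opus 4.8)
The plan is to establish the two inequalities $\gon(T\boxempty T')\leq\min\{m,n\}$ and $\gon(T\boxempty T')\geq\min\{m,n\}$ separately. The upper bound is immediate from Proposition \ref{prop:upperbound}: since $T$ and $T'$ are trees, $\gon(T)=\gon(T')=1$, so $\gon(T\boxempty T')\leq\min\{1\cdot n,\,1\cdot m\}=\min\{m,n\}$. All the work therefore goes into the matching lower bound, and for this I would invoke Proposition \ref{ref:treewidth} together with the strict-bramble machinery described just before the statement: it suffices to exhibit a strict bramble in $T\boxempty T'$ of order $\min\{m,n\}$.

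Assume without loss of generality that $n=\min\{m,n\}$, so $|V(T')|=n\leq m=|V(T)|$. The natural thing to do is to build a bramble out of the $n$ copies of $T$ inside $T\boxempty T'$, one for each vertex of $T'$. For each vertex $w\in V(T')$, let $B_w$ be the subgraph $T\boxempty\{w\}$, which is connected (it's a copy of $T$). Any two such copies $B_w,B_{w'}$ with $w,w'$ adjacent in $T'$ contain the edge joining $(v,w)$ to $(v,w')$ for every $v$, hence they "touch." For non-adjacent $w,w'$ this doesn't immediately work, so I would instead thicken each $B_w$: take a spanning structure that connects all the copies, for instance fix a vertex $v_0\in V(T)$ and let $B_w = \bigl(T\boxempty\{w\}\bigr)\cup\bigl(\{v_0\}\boxempty T'\bigr)$, the union of the $w$-th copy of $T$ with a single fixed copy of $T'$. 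Each $B_w$ is connected, and any two of them share the entire copy $\{v_0\}\boxempty T'$, so $\{B_w : w\in V(T')\}$ is in fact a strict bramble with $n$ elements. The order of this bramble is the size of the smallest vertex set hitting all $B_w$; since the $B_w$ all contain $\{v_0\}\boxempty T'$ a single vertex suffices, giving order $1$ — too small. The fix is to make the $B_w$ pairwise \emph{disjoint} away from forced edges: drop the common copy of $T'$ and instead argue that two disjoint copies $T\boxempty\{w\}$, $T\boxempty\{w'\}$ still form a bramble (not strict) because $T'$ connected means there's a path $w=w_0,w_1,\dots,w_k=w'$, and consecutive copies share edges — but a bramble only requires each \emph{pair} to touch, so this fails unless $T'$ is complete.

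The genuine approach, then, is to choose the bramble more cleverly so that the hitting number is exactly $n$. I would take, for each vertex $w$ of $T'$, the set $B_w$ to be a copy of $T$ "grown along a path in $T'$": pick a spanning path-like ordering or, better, for each $w$ let $B_w$ consist of the copy $T\boxempty\{w\}$ together with, for every other vertex $w'$ of $T'$, the single edge-path in $\{v_0\}\boxempty T'$ from $w$ to $w'$ — i.e. $B_w = \bigl(T\boxempty\{w\}\bigr)\cup\bigl(\{v_0\}\boxempty T'\bigr)$ again, but now I hit it with the observation that to meet all $n$ copies $T\boxempty\{w\}$ one needs, for each $w$, a vertex whose $T'$-coordinate is $w$ — no single vertex lies in two distinct copies — unless one uses the shared copy $\{v_0\}\boxempty T'$; but a vertex of $\{v_0\}\boxempty T'$ has the form $(v_0,w)$ and lies in $B_w$ only via that shared copy, meeting \emph{every} $B_w$. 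So again order $1$. I'm going in circles, and this is exactly the main obstacle: a naive "copies of $T$" bramble either fails to be a bramble or collapses in order. The resolution I would pursue is a \emph{Cartesian-product bramble}: it is a standard fact (and provable directly) that $\tw(T\boxempty T') \geq \min\{|V(T)|,|V(T')|\}$ because $T\boxempty T'$ contains the grid $P_m\boxempty P_n$ as a subgraph — indeed any tree on $m$ vertices contains $P_?$... no, a tree need not contain a long path. The correct standard fact is $\tw(G\boxempty H)\geq \max\{\tw(G)\cdot?,\dots\}$; more usefully, $\tw(T\boxempty T')\geq \min\{m,n\}$ follows from the bramble consisting of all sets $R_{i} = \{(v,w): v\in V(T),\ w\in L_i\}$ where $L_1,\dots,L_n$ is any enumeration-into-singletons...

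I'll state the plan cleanly. The hard part — and the part I'd actually write carefully — is producing a strict bramble of order $n$ in $T\boxempty T'$. I would do this by taking the $n$ subgraphs $B_w := T\boxempty\{w\}$ for $w\in V(T')$ and observing that together with connectivity of $T'$ one can \emph{route}: replace each $B_w$ by $B_w$ plus a fixed path $Q\subseteq T'$ through all vertices is impossible (trees lack Hamilton paths), so instead use that in a tree every vertex is connected to a fixed leaf-to-leaf structure, and take $B_w = \bigl(\{v_0\}\boxempty T'\bigr) \cup \bigl(T\boxempty\{w\}\bigr)$; this is a strict bramble (common intersection $\{v_0\}\boxempty T'$) of order equal to $\min$ over hitting sets $S$ of $\max_w$-type constraints, and the key lemma to prove is that $\|\{B_w\}\| = n$: any $S$ hitting all $B_w$ either contains a vertex of $\{v_0\}\boxempty T'$ (then it meets all $B_w$, giving $1$ — so this bramble is wrong). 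Given that every construction through the shared copy collapses, the only workable strict bramble has the $B_w$ pairwise intersecting \emph{without} a common subgraph, which is impossible for a \emph{strict} bramble unless... Therefore I would abandon strict brambles and use a general bramble together with the Seymour–Thomas theorem (treewidth $=$ max bramble order $-1$): take $B_{(i,j)}$ ranging over a suitably chosen $\min\{m,n\}$-family. Concretely, pick a vertex $v^\ast$ of $T$ and vertices $w_1,\dots,w_n$ exhausting $V(T')$; for $1\le k\le n$ set $B_k = T\boxempty\{w_k\}$; these pairwise touch only when $w_k\sim w_{k'}$, so this is a bramble iff $T'=K_n$. So — final answer — I would prove $\gon\ge \min\{m,n\}$ via treewidth by showing $\tw(T\boxempty T')\ge \min\{m,n\}$ using the known subgraph $P_{d_T+1}\boxempty P_{d_{T'}+1}$...

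Let me just commit: the plan is (1) upper bound from Proposition \ref{prop:upperbound}; (2) lower bound via Proposition \ref{ref:treewidth}, by constructing an explicit bramble of order $\min\{m,n\}$ in $T\boxempty T'$ — taking $n=\min\{m,n\}$, fix a path $v_1,v_2,\ldots$ is not available, so fix instead \emph{any} vertex $r\in V(T)$ and let the bramble be $\{\,B_w : w\in V(T')\,\}$ where $B_w$ is the union of $T\boxempty\{w\}$ with all "vertical edges" incident to it, i.e. $B_w = \bigl(T\boxempty\{w\}\bigr)\cup\{(v,u):\ v\in V(T),\ u\sim w \text{ in }T'\}$; consecutive copies touch automatically, and for non-adjacent $w,w'$ connectivity of $T'$ is used to... this still only gives a bramble when the copies are pairwise touching. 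The honest main obstacle is precisely that the product of two trees is \emph{not} highly connected and a bramble of order $n$ genuinely requires the grid structure; so the real proof goes: embed a spanning tree argument — actually $T\boxempty T'$ contains $K_{2}\boxempty K_2$-type cycles, and the treewidth lower bound of $\min\{m,n\}$ for $T\boxempty T'$ should be proved by the \emph{same} bramble used for grids $P_m\boxempty P_n$, lifted along arbitrary trees: order the vertices of $T'$ as $w_1,\dots,w_n$ so that each $w_k$ ($k\ge 2$) has a neighbor among $w_1,\dots,w_{k-1}$ (a "tree order"), similarly order $V(T)$; then for $1\le k\le n$ let $B_k = \{(v_i,w_j) : j\ge k,\ \text{or}\ (j<k \text{ and } i\ge k)\}$ restricted to its induced connected subgraph — this is the standard "L-shaped" grid bramble, and one checks it's a bramble of order $n$ using the two tree-orders to guarantee connectivity of each $B_k$. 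This is the step I expect to be the main obstacle, because verifying connectivity of each $B_k$ and that every pair $B_k,B_\ell$ touches requires care with the tree orders rather than the clean path case; the order-counting ($\|\mathcal B\|=n$) is then a short pigeonhole argument. With that bramble in hand, Proposition \ref{ref:treewidth} gives $\gon(T\boxempty T')\ge \tw(T\boxempty T')\ge n = \min\{m,n\}$, matching the upper bound and completing the proof.
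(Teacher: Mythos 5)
Your upper bound is exactly the paper's: Proposition \ref{prop:upperbound} plus $\gon(T)=\gon(T')=1$. For the lower bound you correctly identify the right tool, namely a (strict) bramble certifying $\tw(T\boxempty T')\geq\min\{m,n\}$ together with Proposition \ref{ref:treewidth}, but you never arrive at a bramble that works, and the construction you finally commit to has a genuine gap. The L-shaped sets $B_k=\{(v_i,w_j): j\geq k,\ \text{or } (j<k \text{ and } i\geq k)\}$ require the suffix $\{w_k,\dots,w_n\}$ of your tree order to induce a connected subgraph of $T'$, which is false in general: if $T'$ is a star with center $w_1$, then $\{w_2,\dots,w_n\}$ is an independent set, so $V(T)\times\{w_2,\dots,w_n\}$ falls apart into $n-1$ disjoint copies of $T$, and the piece $\{v_i:i\geq k\}\times\{w_1,\dots,w_{k-1}\}$ is disconnected for the same reason on the $T$ side. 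A tree order makes \emph{prefixes} connected, not suffixes, so the claim that one checks connectivity using the two tree orders does not go through as stated.

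The idea you are missing is to index the bramble by \emph{pairs} rather than by a single coordinate: for every $v\in V(T)$ and every $v'\in V(T')$, take the cross $(\{v\}\boxempty T')\cup(T\boxempty\{v'\})$. Each cross is connected because the two copies meet at $(v,v')$, and any two crosses intersect in the two vertices $(v,w')$ and $(w,v')$, so this is a strict bramble. The difficulty you kept running into, that a one-parameter family of rows either fails to pairwise touch or collapses to order $1$ once you glue in a common copy of $T'$, disappears here because each element carries its own row \emph{and} its own column, and no single vertex is shared by all elements. The order computation is then the short pigeonhole argument you anticipated: a set $S$ with $\abs{S}\leq\min\{m,n\}-1$ misses at least one of the $m$ pairwise disjoint subgraphs $\{v\}\boxempty T'$ and at least one of the $n$ pairwise disjoint subgraphs $T\boxempty\{v'\}$, hence misses their union, which is an element of the bramble. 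This gives order at least $\min\{m,n\}$, hence $\gon(T\boxempty T')\geq\tw(T\boxempty T')\geq\min\{m,n\}$, matching the upper bound.
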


\begin{proof}  By Proposition \ref{prop:upperbound}, we have $\textrm{gon}(T\boxempty T')\leq \min\{m,n\}$ since the gonality of any tree is $1$.  For a lower bound,
we construct a strict bramble on $T\boxempty T'$ of order $\min\{m,n\}$, thus giving us a lower bound of $\min\{m,n\}$ on its gonality.  For each $v\in V(T)$ and $v'\in V(T')$, include the union  $(\{v\}\boxempty T')\cup (T\boxempty \{v'\})$ in the set $\mathcal{B}$.  Then $\mathcal{B}$ is a strict bramble:  $\{v\}\boxempty T'$ intersects $T\boxempty \{v'\}$ at $(v,v')$, so every element of $\mathcal{B}$ is connected; and any two elements of the bramble intersect, since
\[\left((\{v\}\boxempty T')\cup (T\boxempty \{v'\})\right)\cap\left((\{w\}\boxempty T')\cup (T\boxempty \{w'\})\right)=\{(v,w'),(w,v')\}. \]

Now, let $S\subseteq V(T\boxempty T')$ be a set of size $\min\{m,n\}-1$.  Since there are $m$ pairwise disjoint graphs of the form $\{v\}\boxempty T'$ and $n$ pairwise disjoint graphs of the form $T\boxempty \{v'\}$, at least one of each such graph fails to intersect $S$, meaning that their union, which is an element of $\mathcal{B}$, also fails to intersect $S$.  It follows that $||\mathcal{B}||\geq \min\{m,n\}$.  Thus, $\textrm{gon}(T\boxempty T')\geq\textrm{tw}(T\boxempty T')\geq ||\mathcal{B}||\geq\min\{m,n\}$.  We conclude that $\textrm{gon}(T\boxempty T')=\min\{m,n\}$.
\end{proof}

\begin{proposition}\label{prop:tree_complete} If $T$ is a tree with at least two vertices, then $\textrm{gon}(K_n\boxempty T)=n$.
\end{proposition}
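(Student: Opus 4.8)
The plan is to prove the two inequalities $\gon(K_n\boxempty T)\le n$ and $\gon(K_n\boxempty T)\ge n$ separately. Throughout I assume $n\ge 2$; the case $n=1$ is trivial since $K_1\boxempty T=T$ has gonality $1$. For the upper bound I would apply Proposition \ref{prop:upperbound} directly: since $T$ is a tree, $\gon(T)=1$, and $\gon(K_n)=n-1$, so $\gon(K_n\boxempty T)\le\min\{(n-1)\cdot|V(T)|,\ 1\cdot n\}$. As $|V(T)|\ge 2$ and $n\ge 2$, we have $(n-1)|V(T)|\ge 2(n-1)\ge n$, so this minimum equals $n$.

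For the lower bound I would argue through treewidth, using Proposition \ref{ref:treewidth}: it suffices to show $\tw(K_n\boxempty T)\ge n$. Pick any edge $w_1w_2$ of $T$. The subgraph of $K_n\boxempty T$ induced on the vertex set $V(K_n)\times\{w_1,w_2\}$ is isomorphic to $K_n\boxempty K_2$, and since treewidth is monotone under taking subgraphs it is enough to prove $\tw(K_n\boxempty K_2)\ge n$. I would do this by exhibiting $K_{n+1}$ as a minor of $K_n\boxempty K_2$: writing the two copies of $K_n$ as $a_1,\dots,a_n$ and $b_1,\dots,b_n$ with $a_i$ adjacent to $b_i$, take as branch sets the singletons $\{a_1\},\dots,\{a_n\}$ together with $\{b_1,\dots,b_n\}$. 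These sets are disjoint and connected (the last one induces a copy of $K_n$), and every pair of them is joined by an edge ($a_ia_j$ for $i\ne j$, and $a_ib_i$ for each $i$), so they witness a $K_{n+1}$ minor. Hence $\tw(K_n\boxempty K_2)\ge\tw(K_{n+1})=n$, so $\tw(K_n\boxempty T)\ge n$, and therefore $\gon(K_n\boxempty T)\ge n$; combined with the upper bound this gives $\gon(K_n\boxempty T)=n$. (The same lower bound can be phrased purely with a bramble: the $n$ singletons $\{(i,w_1)\}$ together with the copy $K_n\boxempty\{w_2\}$ form a bramble of order $n+1$, since any vertex set meeting all of the singletons must contain all of $V(K_n)\times\{w_1\}$ and then still misses $K_n\boxempty\{w_2\}$; the Seymour--Thomas theorem then gives $\tw\ge n$.)

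The main obstacle is the lower bound in the regime where $T$ is small. The naive analogue of the strict bramble used in Proposition \ref{prop:tree_tree}, namely the unions $(\{i\}\boxempty T)\cup(K_n\boxempty\{w\})$ over all $i\in V(K_n)$ and $w\in V(T)$, is indeed a strict bramble, but a hitting set need only meet every ``column'' $\{i\}\boxempty T$ or every ``row'' $K_n\boxempty\{w\}$, so its order is merely $\min\{n,|V(T)|\}$ --- too weak as soon as $|V(T)|<n$ (for instance $T=P_2$). Restricting attention to the subgraph $K_n\boxempty K_2$ sitting over a single edge of $T$ and using its $K_{n+1}$ minor (equivalently, the non-strict bramble above) circumvents this, since that structure already forces treewidth $n$ no matter how few vertices $T$ has.
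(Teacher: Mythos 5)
Your proposal is correct and follows essentially the same route as the paper: the upper bound via Proposition \ref{prop:upperbound}, and the lower bound by finding $K_n\boxempty K_2$ inside $K_n\boxempty T$, exhibiting $K_{n+1}$ as a minor (your branch sets are exactly the paper's ``collapse one copy of $K_n$''), and invoking treewidth monotonicity with Proposition \ref{ref:treewidth}. The extra bramble phrasing and the remark about why the strict bramble from the tree-times-tree case is too weak are nice but not needed.
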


This is a generalization of \cite[Corollary 5.2]{gon3}, which proved $\gon(K_3\cart T)=3$ whenever $T$ is a tree with at least two vertices.

\begin{proof}
We know $\textrm{gon}(K_n\boxempty T)\leq \min\set{(n-1) \cdot \abs{V(T)},1\cdot n}=n$ by Proposition \ref{prop:upperbound}.  By Proposition \ref{ref:treewidth}, it's enough to show that $\textrm{tw}(K_n\boxempty T)\geq n$.  Since $T$ has at least two vertices, we know that $K_n\boxempty K_2$ is a minor (indeed, an induced subgraph) of $K_n\boxempty T$.  The graph $K_n\boxempty K_2$ in turn has $K_{n+1}$ as a minor, obtained from collapsing one copy of $K_n$ to a single vertex.  Since $K_{n+1}$ has treewidth $n$ and treewidth is minor monotonic, we know that $\textrm{tw}(K_n\boxempty T)\geq n$.  This completes the proof.
\end{proof}

\begin{figure}[hbt]
   		 \centering
\includegraphics[scale=1]{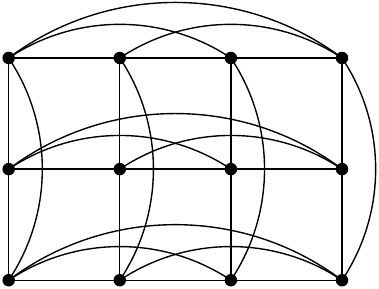}
	\caption{The $3\times 4$ rook's graph}
	\label{figure:rooks_example}
\end{figure}

Our next results involve the $m\times n$ \emph{rook's graph}, which is the product $K_m\boxempty K_n$. The $3\times 4$ rook's graph is pictured in Figure \ref{figure:rooks_example}. If the equation in Question \ref{conjecture:product} is satisfied, the graph $K_m\boxempty K_n$ will have gonality $\min\{(m-1)n,m(n-1)\}$. Unfortunately, to prove this it will not suffice to use treewidth as a lower bound, since for rook's graphs there appears to be a gap between treewidth and gonality.  In the case of the $n\times n$ rook's graph, we have $\textrm{tw}(K_n\boxempty K_n)=\frac{n^2}{2}+\frac{n}{2}-1$ for $n\geq 3$ \cite{lucena}; for large $n$ this is about half as large as the expected gonality of $n(n-1)$.  More generally, it follows from work in \cite{pathwidth} that for $n\geq m\geq 2$, we have
\[\textrm{tw}(K_m\boxempty K_n)\leq\begin{cases}
\frac{m}{2}n+\frac{m}{2}-1 &\textrm{if $m$ is even}\\
\lceil\frac{m}{2}\rceil n -1 &\textrm{if $m$ is odd;}
\end{cases}\] in particular, those authors proved that the above formula computes a number called the \emph{pathwidth} of the rook's graph, and pathwidth serves as an upper bound on treewidth. For large $m$ and $n$, this upper bound on treewidth is about half as large as $(m-1)n$, which is the expected gonality of the $m\times n$ rook's graph with $m\leq n$.

We will make frequent use of the following result, which is a weaker version of Dhar's burning algorithm \cite{dhar}.

\begin{lemma}\label{lemma:burning}
Let $D$ be an effective divisor on a graph $G$, and let $v\in V(G)$ with no chips from $D$.  Let a burning process propagate through $G$ as follows:  set $v$ on fire, and let any edge incident to a burning vertex burn.  If at any point a vertex has more burning edges incident to it than it has chips from $D$, let that vertex burn.  If the whole graph burns, then the debt in $D-(v)$ cannot be eliminated through chip firing, so $r(D)=0$.
\end{lemma}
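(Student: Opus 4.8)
The plan is to prove the statement directly by contradiction, using only the definition of linear equivalence via the graph Laplacian. Since $D$ is effective, $r(D)\ge 0$ automatically, so it suffices to show $r(D)<1$; taking the test divisor $F=(v)$ in the definition of rank, this amounts to showing that $D-(v)$ is \emph{not} equivalent to any effective divisor.

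So suppose for contradiction that $D-(v)\sim E$ with $E$ effective. By the definition of $\sim$, there is a vector $x\in\Z^{V(G)}$ with $E=D-(v)-Lx$, where $L$ is the Laplacian of $G$ and $-Lx$ records the net effect of firing each vertex $w$ exactly $x_w$ times; since $L$ annihilates the all-ones vector, I may subtract a constant from $x$ and assume $\min_{w}x_w=0$. If $x=0$ then $E=D-(v)$, which has coefficient $-1$ at $v$ (as $D$ has no chips on $v$), contradicting effectivity. Otherwise let $M=\max_w x_w\ge 1$ and $S=\set{w:x_w=M}$, a nonempty proper subset of $V(G)$.

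First, $v\notin S$: otherwise $E_v=D_v-1-(Lx)_v=-1-(Lx)_v$, and $(Lx)_v=\sum_{u\sim v}e_{uv}(x_v-x_u)\ge 0$ since $x_v=M\ge x_u$ for every neighbor $u$, forcing $E_v\le -1<0$. Next, I claim every $w\in S$ satisfies $D_w\ge e(w,V(G)\setminus S)$, where $e(w,A)$ denotes the number of edges from $w$ into $A$: indeed $(Lx)_w=\sum_{u\sim w}e_{uw}(x_w-x_u)$, in which neighbors $u\in S$ contribute $0$ and neighbors $u\notin S$ contribute $e_{uw}(M-x_u)\ge e_{uw}$, so $(Lx)_w\ge e(w,V(G)\setminus S)$; combining with $0\le E_w=D_w-(Lx)_w$ gives the claim. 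Thus $S$ is a nonempty set, not containing $v$, such that no vertex of $S$ can ever catch fire: by induction on the burning process, whenever the burnt set $T$ is contained in $V(G)\setminus S$, each $w\in S$ has at most $e(w,T)\le e(w,V(G)\setminus S)\le D_w$ burning edges incident to it and so does not ignite, keeping the burnt set inside $V(G)\setminus S$. Hence the fire never reaches $S$, contradicting the hypothesis that all of $G$ burns, and we conclude $r(D)=0$.

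The one place that requires care is the choice of $S$: it must be taken to be the set of vertices fired the \emph{maximal} number of times, so that the differences $x_w-x_u$ appearing in $(Lx)_w$ all have the right sign, and the possibility $v\in S$ must be ruled out separately before concluding that $S$ is fireproof. The remainder is routine Laplacian bookkeeping together with the elementary observation that a set in which every vertex has at least as many chips as boundary edges is never ignited. (Equivalently, the hypothesis forces $D-(v)$ to be $v$-reduced, and one could instead finish by invoking uniqueness of $v$-reduced representatives; the argument above avoids that machinery.)
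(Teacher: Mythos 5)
Your proof is correct. Note that the paper itself gives no proof of this lemma: it is stated as ``a weaker version of Dhar's burning algorithm'' with a citation to the literature, so any self-contained argument necessarily goes beyond what the paper records. What you have written is the standard proof of Dhar's criterion, and every step checks out: writing $E=D-(v)-Lx$ with $\min_w x_w=0$, ruling out $x=0$ directly, taking $S$ to be the set of maximally-fired vertices, verifying $v\notin S$ (else $E_v\le -1$), and showing each $w\in S$ carries at least as many chips as it has edges leaving $S$, so that the invariant ``the burnt set stays inside $V(G)\setminus S$'' is preserved throughout the process. The one point worth being pedantic about is the strictness of the inequalities, and you have it right: a vertex ignites only when the burning edges \emph{strictly} exceed its chips, which is exactly what fails for $w\in S$ since $e(w,T)\le e(w,V(G)\setminus S)\le D_w$. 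Your closing remark is also accurate --- the hypothesis that the whole graph burns is equivalent to $D$ (hence $D-(v)$) being $v$-reduced, and uniqueness of $v$-reduced representatives gives a one-line alternative finish --- but your Laplacian argument avoids importing that machinery, which is in the spirit of how the paper uses the lemma.
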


In some cases this lemma can be used to provide a lower bound on gonality.  We will use it to show the following result, which implies the well-known fact that $\textrm{gon}(K_n)\geq n-1$.

\begin{lemma}\label{lemma:burnination}
Let $K_n$ be a complete graph on $n$ vertices, and let $D$ be an effective divisor of degree at most $n-2$.  Let $v$ be any vertex on which $D$ has no chips.  Then running the burning process from Lemma \ref{lemma:burning} burns the whole graph.
\end{lemma}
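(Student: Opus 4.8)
The plan is to argue that the fire spreads one vertex at a time until the whole graph $K_n$ is consumed. We start the fire at a vertex $v$ with no chips; since $v$ is burning, all $n-1$ edges incident to $v$ burn, and each of these edges leads to a distinct vertex of $K_n$. Now track the set $W$ of currently burning vertices as it grows. The key observation is a counting argument: if $W$ is the set of burning vertices and $u \notin W$, then $u$ is incident to exactly $|W|$ burning edges (one to each vertex of $W$, since $K_n$ is complete), and $u$ catches fire as soon as $|W|$ exceeds the number of chips $D$ places on $u$.

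First I would handle the base case: after igniting $v$, every other vertex sees exactly one burning edge. Any vertex with zero chips burns immediately. If every vertex outside $\{v\}$ has at least one chip, then $D$ already places at least $n-1$ chips on $V(K_n)\setminus\{v\}$, contradicting $\deg(D)\le n-2$. So at least one more vertex, say $v_2$, burns, and now $|W|=2$. Then I would iterate: suppose at some stage the burning set $W$ has size $k$ with $1 \le k \le n-1$, and suppose the fire has stalled, i.e. every vertex $u \notin W$ has strictly more chips than $k-1$, hence at least $k$ chips. There are $n-k$ such vertices, so $\deg(D) \ge k(n-k) \ge 1\cdot(n-1) = n-1$ (using $1\le k\le n-1$, so $k(n-k)$ is minimized at the endpoints), again contradicting $\deg(D)\le n-2$. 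Therefore the fire never stalls before $W = V(K_n)$, and the whole graph burns.

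The main thing to be careful about is the bookkeeping on "burning edges versus chips": in the burning process a vertex burns when its number of incident burning edges \emph{exceeds} its chip count, so a vertex with $c$ chips burns once it has $c+1$ burning neighbors, i.e. once $|W| \ge c+1$, equivalently $c \le |W|-1$. Thus at a stalled configuration with $|W|=k$, every unburnt vertex has $c \ge k$, which is exactly what feeds the inequality $\deg(D)\ge k(n-k)$. The only genuine subtlety is confirming that $k(n-k)\ge n-1$ for all integers $k$ with $1\le k\le n-1$; this is immediate since $k(n-k)-(n-1) = (k-1)(n-1-k) \ge 0$ on that range. I do not expect any serious obstacle here — the result is essentially a clean pigeonhole argument on the structure of the complete graph, and the degree bound $n-2$ is exactly tight for this to go through.
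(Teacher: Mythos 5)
Your proof is correct and is essentially the same argument as the paper's: both derive a contradiction from a stalled burning configuration by noting that each unburned vertex must carry at least as many chips as there are burning vertices, forcing $\deg(D)\geq k(n-k)\geq n-1$. The only cosmetic difference is that you parametrize by the number $k$ of burning vertices and verify the quadratic inequality via the factorization $(k-1)(n-1-k)\geq 0$, whereas the paper parametrizes by the number of unburned vertices and invokes concavity; these are equivalent.
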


\begin{proof}  Suppose for the sake of contradiction that the burning process terminates before the whole graph burns, say with $u$ unburned vertices where $1\leq u\leq n-1$. Each unburned vertex has $n-u$ burning edges incident to it, meaning that there must be at least $u(n-u)$ chips on the graph.  As a function of $u$, the expression $u(n-u)$ is concave down, meaning that it achieves its minimum on the interval $1\leq u\leq n-1$ at a boundary point. (We will frequently make use of such a concavity argument over the next several results.) Plugging in the boundary points $u=1$ and $u=n-1$ yields $1\cdot (n-1)=n-1$ and $(n-1)(n-(n-1))=n-1$, so at minimum there are $n-1$ chips on the graph.  This is a contradiction to $\deg(D)\leq n-2$, so we conclude that the whole graph burns.
\end{proof}

  This lets us prove the following result, which is a step towards computing the gonality of small rook's graphs.  We can describe a vertex of $K_m\boxempty K_n$ as $(i,j)$, where $1\leq i\leq m$ and $1\leq j\leq n$.  The set of all vertices in $K_m\boxempty K_n$ with a fixed $i$ will be called a row, and the set of all vertices in $K_m\boxempty K_n$ with a fixed $j$ will be called a column.  Note that each row is a copy of $K_n$, and each column is a copy of $K_m$.

\begin{lemma}\label{lemma:rooks_lemma}
Let $G=K_m\boxempty K_n$ be a rook's graph where $2\leq m\leq n$, and let $D$ be a divisor on $G$ with $n(m-1)-1$ chips.  Then there exists a vertex $v\in V(G)$ such that running the burning process from Lemma \ref{lemma:burning} starting at $v$ results in at least two whole rows and at least whole two columns of $G$ being on fire.
\end{lemma}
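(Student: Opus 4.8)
The plan is to run Dhar's burning algorithm (Lemma~\ref{lemma:burning}) from a carefully chosen starting vertex $v$: one lying in a row that carries very few chips, so that the entire row burns immediately and the fire then cascades through the rest of $G$. Since $\deg(D)=n(m-1)-1$, the average number of chips in a row is $n-1-\tfrac1m<n-1$, so some row $i_0$ carries at most $n-2$ chips; such a row has at least two chip-free vertices, and I take $v$ to be one of them. Likewise the average number of chips in a column is $m-1-\tfrac1n<m-1$, so some column $j_1$ carries at most $m-2$ chips.

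The first step is an auxiliary burning fact in the spirit of Lemma~\ref{lemma:burnination}: \emph{in $K_N$, if at least one vertex is on fire and the divisor has at most $N-2$ chips in total, then all of $K_N$ burns} (the chips sitting on an already-burning vertex do not affect how the fire spreads, so one may pretend the burning vertex is chip-free). This follows from the same argument as Lemma~\ref{lemma:burnination}: listing the not-yet-burned vertices in order of increasing chip count, the $k$-th of them is incident to at least $k$ burning edges when it is processed, and a failure to propagate would force $t(N-t)\le N-2$ for some $1\le t\le N-1$, which is impossible by concavity. Applying this to the copy of $K_n$ on row $i_0$ (edges leaving the row only speed up the fire) shows that, starting from $v$, all of row $i_0$ burns; then $(i_0,j_1)$ is on fire, so applying the fact again to the copy of $K_m$ on column $j_1$ shows that all of column $j_1$ burns.

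Next comes the cascade. With all of row $i_0$ and all of column $j_1$ burning, fix any row $i$ carrying at most $n-1$ chips. Inside row $i$ the vertex $(i,j_1)$ is already burning, and every other vertex $(i,j)$ has the burning neighbour $(i_0,j)$ in its column; ordering the $n-1$ not-yet-burned vertices of row $i$ by chip count and propagating, the $k$-th of them sees at least $k+1$ burning edges, and a failure to burn the whole row would force $(n-\ell)(\ell+1)\le n-1$ for some $1\le\ell\le n-1$, again impossible. So every row with at most $n-1$ chips burns completely, and symmetrically every column with at most $m-1$ chips burns completely. It remains to count: at most $\lfloor(n(m-1)-1)/n\rfloor=m-2$ rows can carry $n$ or more chips, so at least two rows carry at most $n-1$ chips and hence burn; and since $m\le n$ we have $(n(m-1)-1)/m=n-\tfrac{n+1}{m}<n-1$, so at most $n-2$ columns can carry $m$ or more chips, and at least two columns carry at most $m-1$ chips and hence burn. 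This produces the two whole rows and two whole columns on fire.

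The step I expect to be the main obstacle is arranging the cascade to terminate with \emph{two} full rows and \emph{two} full columns rather than just one of each. A single burnt row already ignites every sufficiently light row, but on its own it only forces one light column to burn completely; it is precisely to fix the column threshold that one first burns the entire column $j_1$. The bookkeeping is correspondingly tight: it is the ``$-1$'' in $\deg(D)=n(m-1)-1$, together with the hypothesis $m\le n$, that keeps the number of chip-heavy rows (respectively columns) below $m-1$ (respectively $n-1$), leaving room for two chip-light ones.
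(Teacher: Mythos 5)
Your argument is correct and is essentially the paper's own proof: both locate a chip-light line by pigeonhole, burn it entirely via the concavity/stall argument of Lemma~\ref{lemma:burnination} (extended to allow the ignition vertex to carry chips), and then use the extra burning edge contributed by a fully burned perpendicular line to show that any row with at most $n-1$ chips and any column with at most $m-1$ chips must burn completely, with $m\le n$ guaranteeing two of each. The remaining differences are cosmetic --- you burn a row before a column and obtain the second burned row and column by directly counting chip-heavy lines, whereas the paper burns a column first and argues the second line by a global contradiction using the same inequality $(n-1)m\ge(m-1)n$ --- apart from one harmless slip: the average number of chips per row is $n-\frac{n+1}{m}$, which is at most (not equal to) $n-1-\frac{1}{m}$, though the bound you actually use is correct.
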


\begin{proof}
By the Pigeonhole principle, one of the $n$ columns must have fewer than $m-1$ chips.  Choose $v$ to be a vertex in this column that has no chips on it, and run the burning process.  Then this whole column burns by Lemma \ref{lemma:burnination}.  Since $n(m-1)-1\leq (n-1)m-1$, one of the $m$ rows must have fewer than $n-1$ chips.  Since a whole column is burning, some vertex in this row is on fire, which by another application of Lemma \ref{lemma:burnination} means that whole row must be on fire.

Suppose for the sake of contradiction that there are no more rows that burn entirely.  An unburned row has $u$ unburned vertices, where $1\leq u\leq n-1$. Each unburned vertex has at least $n-u+1$ burning edges coming into it ($(n-u)$ from the same row, and $1$ from the burning row), meaning that the row has at least $n-u+1$ chips on each unburned vertex.  This means the whole row has $u(n-u+1)$ chips on it. As a function of $u$, this is concave down, and so is minimized on the interval $1\leq u\leq n-1$ at the boundary points.  Plugging in $u=1$ and $u=n-1$ yields $1\cdot(n-1+1)=n$ and $(n-1)(n-(n-1)+1)=2n-2$, respectively; since $n\geq 2$ we have $n\leq 2n-2$, so each unburned row has at least $n$ chips on it.  There are $m-1$ unburned rows, meaning that there must be $(m-1)n$ chips on the graph, a contradiction since we have only placed $n(m-1)-1$ chips.  Thus a second row must burn entirely.

An identical argument shows that if no second column burns entirely, then the graph has at least $(n-1)m$ chips on it; since $(n-1)m\geq (m-1)n$, this yields the same contradiction, so a second column must burn entirely.
\end{proof}

We will also use a result from \cite{spencer}. A \emph{sourceless partial orientation} on a graph $G=(V,E)$ is a choice of orientations on some subset of the edges $E$, such that every vertex has at least one incoming edge.  Given a sourceless partial orientation $\mathcal{O}$, the divisor $D_{\mathcal{O}}$ is the chip configuration with $\textrm{indeg}_\mathcal{O}(v)-1$ chips on the vertex $v$, where $\textrm{indeg}_\mathcal{O}(v)$ is the number of edges oriented towards $v$ in $\mathcal{O}$.

\begin{lemma}\label{lemma:spencer}
Let $D$ be a divisor on $G$ with $\deg(D)\leq g-1$.  Then $r(D)\geq 0$ if and only if $D\sim D_{\mathcal{O}}$ where $\mathcal{O}$ is  a sourceless partial orientation.
\end{lemma}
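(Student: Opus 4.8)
The reverse implication is immediate: if $\mathcal{O}$ is a sourceless partial orientation, then every vertex $v$ has $\mathrm{indeg}_\mathcal{O}(v)\geq 1$, so $D_\mathcal{O}$ is effective, and $D\sim D_\mathcal{O}$ forces $r(D)\geq 0$. The content is the forward implication, and the plan is as follows. Assume $r(D)\geq 0$; replacing $D$ by an equivalent divisor we may assume $D$ is effective, and in fact that $D$ is the $q$-reduced divisor in its class for some base vertex $q$ (the $q$-reduced form of a divisor of nonnegative rank is effective). We must produce a sourceless partial orientation $\mathcal{O}$ with $D_\mathcal{O}\sim D$.

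First I would run Dhar's burning algorithm (Lemma~\ref{lemma:burning}) starting from $q$. Since $D$ is $q$-reduced the whole graph burns, and, breaking ties arbitrarily, this records a total order $t\colon V(G)\to\{0,1,\dots,\abs{V(G)}-1\}$ with $t(q)=0$, where $t(v)$ is the time $v$ catches fire. For $v\neq q$ let $\beta(v)$ be the number of edges joining $v$ to vertices burned strictly before it; because $v$ caught fire we have $\beta(v)\geq D(v)+1$, and since each edge is counted in exactly one $\beta(v)$ (for its later endpoint, which is never $q$) we get $\sum_{v\neq q}\beta(v)=\abs{E(G)}$. Now build $\mathcal{O}$ by selecting, for each $v\neq q$, exactly $D(v)+1$ of the $\beta(v)$ edges counted at $v$, orienting them toward $v$, and leaving every other edge unoriented for now. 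Then $\mathrm{indeg}_\mathcal{O}(v)=D(v)+1\geq 1$ for all $v\neq q$, and the number of still-unoriented edges is $\abs{E(G)}-\sum_{v\neq q}(D(v)+1)=g-\deg(D)+D(q)$, which is at least $D(q)+1$ by the hypothesis $\deg(D)\leq g-1$. The final step is to orient $D(q)+1$ of these leftover edges toward $q$: this gives $\mathrm{indeg}_\mathcal{O}(q)=D(q)+1\geq 1$, so $\mathcal{O}$ is sourceless, and $D_\mathcal{O}(v)=\mathrm{indeg}_\mathcal{O}(v)-1=D(v)$ for every $v$, i.e.\ $D_\mathcal{O}=D$.

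The main obstacle is precisely this last step: a priori none of the leftover edges need be incident to $q$, since the slack $\beta(v)-D(v)-1$ could be concentrated far from $q$. Handling this requires care in how the construction is arranged --- exploiting the freedom in the burning order and in which back-edges are selected at each $v$, so that edges at $q$ are left unselected whenever possible, together with a judicious choice of $q$ (for instance a vertex where $D$ vanishes, if one exists). Alternatively one can bypass the greedy argument: reduce first to the case $\deg(D)=g-1$, where the slack is tightest but the general case follows by uncriticizing additional chips, and in that case invoke the cycle--cocycle reversal system for (partial) orientations together with the standard correspondence between degree-$(g-1)$ divisors of rank $-1$ and acyclic orientations to move within the class $[D]$ to a representative of the form $D_\mathcal{O}$ with $\mathcal{O}$ having no source. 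This orientation-theoretic bookkeeping is where the real work lies; the degree count above is routine.
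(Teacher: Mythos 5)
The paper does not actually prove this lemma: it is imported from the cited reference \cite{spencer} (Backman's work on Riemann--Roch theory for graph orientations), so there is no in-paper argument to compare yours against. Judged on its own, your reverse implication is correct, but the forward direction has a genuine gap --- one you candidly flag yourself --- and it is not a gap your sketched remedies close. Concretely, the greedy construction can fail outright: take $G$ with vertices $q,a,b$, one edge $qa$, and three parallel edges between $a$ and $b$, with $D=0$ (so $g=2$ and $\deg D=0\leq g-1$). The zero divisor is $q$-reduced, the burning order is forced to be $q,a,b$, and $a$'s only back-edge is $qa$, so your construction must orient $qa$ away from $q$; the two leftover edges both join $a$ to $b$, and $q$ is left a source with no unoriented incident edge available to repair it. The lemma is still true for this example (orient $a\to q$, $b\to a$, $a\to b$), but the correct orientation sends into $a$ an edge from the \emph{later}-burned vertex $b$, which your scheme forbids: restricting each vertex's incoming edges to back-edges of the burning order is too rigid. ``Exploiting freedom in the burning order'' cannot help here, since the order is forced; and while a different base vertex happens to work in this example, you give no argument that a good choice of $q$ always exists.

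Your fallback --- reduce to $\deg D=g-1$ and invoke the cycle--cocycle reversal system together with the correspondence between degree-$(g-1)$ divisors of rank $-1$ and acyclic orientations --- points at the actual proof in the literature, but as written it is a citation rather than an argument, and the proposed reduction is itself shaky: deleting an oriented edge into $v$ from a sourceless $\mathcal{O}$ representing $D+(v)$ only yields a sourceless orientation for $D$ when $\mathrm{indeg}_{\mathcal{O}}(v)\geq 2$. This lemma is a substantive theorem whose proof genuinely requires the orientation machinery; the degree count is, as you say, the routine part, and everything else is missing.
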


It follows that given a divisor $D$ of nonnegative rank and degree at most $g-1$, we can find an equivalent divisor $D'$ such that every vertex $v$ has at most $\deg(v)-1$ chips, and such that no two adjacent vertices $v$ and $v'$ have exactly $\deg(v)-1$ and $\deg(v')-1$ chips, respectively.  We are now ready to prove our main theorem regarding rook's graphs.

\begin{theorem}\label{theorem:rooks}
Let $2\leq m\leq n$ and $m\leq 5$.  Then $\textrm{gon}(K_m\boxempty K_n)=(m-1)n$.
\end{theorem}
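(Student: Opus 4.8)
The upper bound $\gon(K_m \boxempty K_n) \le (m-1)n$ is immediate from Proposition \ref{prop:upperbound}, so the entire content is the matching lower bound. The plan is to argue by contradiction: suppose there is an effective divisor $D$ of positive rank with $\deg(D) \le (m-1)n - 1$, and derive a contradiction using the burning machinery together with the structural result from Lemma \ref{lemma:spencer}. First I would note that $g(K_m \boxempty K_n)$ is large compared to $(m-1)n$ (since each of the $n$ columns is a $K_m$ contributing $\binom{m}{2}$ edges and each of the $m$ rows is a $K_n$ contributing $\binom{n}{2}$ edges), so $\deg(D) \le g-1$ and Lemma \ref{lemma:spencer} applies: we may replace $D$ by an equivalent divisor in which every vertex $v$ carries at most $\deg_G(v) - 1 = (m-1) + (n-1) - 1 = m+n-3$ chips, with no two adjacent vertices simultaneously saturated.

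Next I would invoke Lemma \ref{lemma:rooks_lemma}: there is a vertex $v$ from which Dhar's burning process ignites at least two full rows and at least two full columns. The heart of the argument is to push this further and show that, under the hypothesis $m \le 5$, the burning must in fact consume the entire graph, which by Lemma \ref{lemma:burning} contradicts $r(D) > 0$. The mechanism is the same concavity estimate used repeatedly in the preceding lemmas: once $k$ rows and $\ell$ columns are burning, an unburned vertex lying in an unburned row and unburned column receives many burning edges (roughly $k$ from its column plus $\ell$ from its row plus the within-row/within-column contributions from already-burned neighbors), forcing it to hold many chips; summing these lower bounds over all unburned vertices and comparing with $(m-1)n - 1$ forces a contradiction unless everything burns. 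The restriction $m \le 5$ is exactly what makes this counting work out: with few rows, even a handful of burning columns sends enough fire into each remaining row that the chip budget $(m-1)n-1$ is insufficient to stop it, whereas for large $m$ the bound degrades (consistent with the treewidth gap discussed before the theorem). I would organize this as a short case analysis on $m \in \{2,3,4,5\}$ — or better, a unified inductive "spread" argument: show that whenever at least two rows and two columns burn but some row is unburned, a further row must burn (and symmetrically for columns), by the concavity bound; iterating, all rows and all columns burn, hence all of $G$ burns.

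The main obstacle I anticipate is making the chip-counting in this spreading step tight enough to cover all of $m \le 5$ simultaneously. Lemma \ref{lemma:rooks_lemma} already extracts two rows and two columns "for free" from Pigeonhole, but to ignite the third, fourth, and fifth rows one needs to account carefully for the chips that the \emph{burned} rows and columns are permitted to carry — and here the Lemma \ref{lemma:spencer} normalization is essential, because it caps each vertex at $m+n-3$ chips and forbids adjacent saturated pairs, which is what prevents the adversary from hoarding all $(m-1)n-1$ chips in the not-yet-burned region. I would expect the argument to require tracking separately the chips in burned-row/burned-column intersections, burned-row/unburned-column strips, and the fully-unburned block, bounding each with the degree cap and the concavity inequality, and checking that the totals exceed $(m-1)n-1$ in every configuration with an unburned row. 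If a clean uniform inequality proves elusive, the fallback is to handle $m=2,3,4,5$ as four explicit cases, each a finite concavity computation of the type already carried out in Lemmas \ref{lemma:burnination} and \ref{lemma:rooks_lemma}.
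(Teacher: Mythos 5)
Your overall architecture matches the paper's: upper bound from Proposition \ref{prop:upperbound}, then a contradiction argument that normalizes $D$ via Lemma \ref{lemma:spencer}, ignites two rows and two columns via Lemma \ref{lemma:rooks_lemma}, and spreads the fire to the whole graph by concavity counting. However, there is a genuine gap at $m=5$. Your proposal rests on the claim that the Lemma \ref{lemma:spencer} normalization (each vertex capped at $m+n-3$ chips, no two adjacent saturated vertices) together with the concavity estimates is enough to force the whole graph to burn, whether organized as a unified ``spread'' induction or as four explicit cases. That claim is false for $m=5$: the paper exhibits a degree-$19$ divisor on $K_5\boxempty K_5$ of the form $D_{\mathcal{O}}$ for a sourceless partial orientation $\mathcal{O}$ (Figure \ref{figure:5x5}) such that \emph{no} choice of starting vertex burns the whole graph --- three mutually adjacent vertices carrying $n+1=6$ chips each survive every burning process. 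So your proposed unified spread argument, and equally a case analysis that only uses the Lemma \ref{lemma:spencer} conditions, would get stuck exactly where the unburned set is a triple of mutually adjacent vertices each holding $n+1$ chips.

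The missing idea is an additional, $m=5$-specific preprocessing of the divisor: if $D$ places $n+1$ chips on three mutually adjacent vertices, one fires all three of those vertices, which (after accounting for the fact that only $n-4$ chips live elsewhere) yields an equivalent effective divisor with at most $n-1$ chips on every vertex. Only after securing the condition ``no three mutually adjacent vertices each have $n+1$ chips'' (at the cost of possibly losing the orientation normalization, which the paper tracks as an either/or hypothesis) does the case analysis on one, two, or three partially unburned rows close. A smaller issue: you propose applying Lemma \ref{lemma:spencer} uniformly, but for $m=2$ the degree condition $\deg(D)\leq g-1$ can fail (e.g.\ $K_2\boxempty K_2$), so that case must be handled directly from Lemma \ref{lemma:rooks_lemma}, as the paper does.
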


\begin{proof}  By Proposition \ref{prop:upperbound} we have $\textrm{gon}(K_m\boxempty K_n)\leq \min\{(m-1)n,m(n-1)\}=(m-1)n$.  Thus we must show $\textrm{gon}(K_m\boxempty K_n)\geq (m-1)n$.  This is equivalent to showing that no effective divisor of degree $n(m-1)-1$ has positive rank.  Choose $D$ to be an arbitrary effective divisor of this degree. If $m=2$, choose $v$ according to Lemma \ref{lemma:rooks_lemma}. Starting the burning process at $v$ burns both rows, and thus the entire graph, so the debt cannot be eliminated in $D-(v)$.  It follows that $r(D)<1$, and since $D$ was arbitrary we have  $\textrm{gon}(K_2\boxempty K_n)\geq (2-1)n=n$.  (Since $K_2$ is a tree, this result also follow from Proposition \ref{prop:tree_complete}.)

For $m\geq 3$, we will use the same proof idea, although we will be slightly more careful with our divisor $D$. Using the fact that $n\geq m\geq 3$, we have 
\begin{align*}g(K_m\boxempty K_n)\,=\,&m{\binom{n}{2}}+n{\binom{m}{2}}-mn+1
\\\,\geq\,& mn+mn-mn+1
\\\,=\, & mn-1>n(m-1)-1.
\end{align*}
Thus we have that $\deg(D)\leq g(K_m\boxempty K_n)-1$, meaning we may apply Lemma \ref{lemma:spencer} to find a sourceless partial orientation $\mathcal{O}$ with $D\sim D_{\mathcal{O}}$.  Replace $D$ with this new divisor.  Since the degree of every vertex in the graph is $n+m-2$, we now have that $D$ places no more than $n+m-3$ chips on any vertex, and that no two adjacent vertices both have $n+m-3$ chips.

Our strategy for $m\geq 3$ is as follows: we will show that there exists a vertex $v$ such that, running the burning process from Lemma \ref{lemma:burning} starting at $v$, the entire graph burns.  By that lemma this will imply that the debt in $D-(v)$ cannot be eliminated, so $r(D)<1$.  When $m=5$ we may need to further modify our divisor via chip-firing before we can find this $v$, but we will still obtain the desired result.

For $m=3$ and $m=4$, choose $v$ according to Lemma \ref{lemma:rooks_lemma} based on our divisor $D$. Starting the burning process at $v$, we know by Lemma \ref{lemma:burnination} that at least two rows and two columns will burn.

Let $m=3$, so we have $2n-1$ chips on the graph, and no vertex has more than $n$ chips. We know the whole graph burns except possibly for some vertices in a single row. Suppose for the sake of contradiction that there are $u>0$ unburned vertices. Then we have $u\leq n-2$ since two columns are burning; and we have $2\leq u$ since if $u=1$ we would need $n+1$ chips on the sole unburned vertex. Every unburned vertex must have $n-u+2$ chips on it,  which means there must be at least $u(n-u+2)$ chips on this row, where $2\leq u\leq n-2$; note that these bounds imply that $n\geq 4$.  This number of chips in concave down in $u$, and so obtains its minimum on the interval $2\leq u\leq n-2$ at a boundary point. Thus its minimum on this interval is either $2(n-2+2)=2n$ or $(n-2)(n-(n-2)+2)=4n-8$, and $4n-8\geq 2n$ since $n\geq 4$.  This means we must have more than $2n-1$ chips, a contradiction.  Thus the whole graph burns, and $r(D)<1$.

Let $m=4$.  We have the following: there are $3n-1$ chips on the graph; no vertex has more than $n+1$ chips; and no two adjacent vertices have $n+1$ chips each.  The whole graph burns except possibly for vertices in at most two rows. Suppose for the sake of contradiction that the  whole graph does not burn. We will split into two cases:  where the unburned vertices are all in one row, and where they are spaced out over two rows.

\begin{itemize}

\item \noindent\textbf{Case 4.1}: Suppose there are $u>0$ unburned vertices, all in the same row.  Note that we cannot have $u=1$, since a sole unburned vertex would need $n+2$ chips. We also cannot have $u=2$, since the two unburned vertices would each need $n+1$ chips, but they are adjacent since they are in the same row.  Thus $3\leq u\leq n-2$. It follows that $n-2\geq 3$, so $n\geq 5$.  Each unburned vertex is incident to $n-u+3$ burning edges, so there must be at least $u(n-u+3)$ chips on the graph.  For $3\leq u\leq n-2$ this is minimized either when $u=3$ or when $u=n-2$, which yield $3n$ and $5n-10$.  For $n\geq 5$ these are both strictly larger than $3n-1$, a contradiction.

\item \noindent\textbf{Case 4.2}: Suppose there are unburned vertices in two rows, without loss of generality the first and second rows.  Let $u_1$ and $u_2$ be the number of unburned vertices in these rows, respectively; we have $1\leq u_i\leq n-2$, and without loss of generality $u_1\leq u_2$. Since there are $3n-1$ chips on the graph, we have that the first row has at most $\floor{\frac{3n-1}{2}}$ chips. Each of the $u_1$ unburned vertices in this row has at least $n-u_1+2$ burning edges incident to it, so this row must have at least $u_1\cdot(n-u_1+2)$ chips.  The same argument as in the $m=3$ case shows that for $2\leq u_1\leq n-2$, the minimum value of this function is either $2n$ or $4n-8$.  Both of these exceed $\floor{\frac{3n-1}{2}}$ since $n\geq 4$.  Thus it must be the case that $u_1=1$, so there is a single unburned vertex $v_1$ in this row. Since it did not burn, it must have $n-1+2=n+1$ chips on it, the maximum number allowed.

Consider the second row, which has $u_2$ unburned vertices.  Since there are $n+1$ chips on $v_1$, there are at most $2n-2$ chips on this row.  Each unburned vertex has $n-u_2+3$ burning edges incident to it, except possibly for one in the same column as $v_1$, which would have $n-u_2+2$ burning edges.  This means there are at least $(u_2-1)(n-u_2+3)+(n-u_2+2)=-u_2^2+(n+3)u_2-1$ chips on this row.  Plugging in $u_2=2$ yields $2n+1$, and plugging in $u_2=n-2$ yields $5n-11$.  Both of these are larger than $2n-2$ for $n\geq 4$, and one of them is the minimum value of $-u_2^2+(n+3)u_2-1$ for $2\leq u\leq n-2$; it follows that $u_2=1$.  Call the one unburned vertex in the second row $v_2$. It must have $n+1$ chips on it, and it must be in the same column as $v_1$; otherwise $v_1$ and $v_2$ would each require $n+2$ chips.  But this means we have two adjacent vertices with $n+1$ chips each, which is not allowed in $D$, giving us a contradiction.
\end{itemize}
In both cases, we have reached a contradiction, so the entire graph $K_4\boxempty K_n$ burns. It follows that $r(D)<1$.

Finally, let $m=5$. We have the following: there are $4n-1$ chips on the graph; no vertex has more than $n+2$ chips; and no two adjacent vertices have $n+2$ chips each.  Before choosing $v$ and running the burning process, we will modify our divisor $D$ slightly so that we may make an additional assumption on it:  we would like to be able to assume that no three mutually adjacent vertices have $n+1$ chips each.  Suppose $D$ \emph{does} place $n+1$ chips on three vertices $v_1$, $v_2$, and $v_3$, all mutually adjacent (so either all three are in the same row, or all three are in the same column).  Since there are $4n-1$ chips on the graph, there are $n-4$ chips off of the vertices $v_1$, $v_2$, and $v_3$; in particular, no other vertex has more than $n-4$ chips on it.  Perform three chip-firing moves by firing $v_1$, $v_2$, and $v_3$.  These vertices each end up with $0$ chips; the vertices in the same row or column as all three vertices each gain three chips; and the vertices that share a row or column with a single $v_i$ each gain $1$ chip.  This new divisor has no more than $n-4+3=n-1$ chips on each vertex, and so satisfies 
three conditions:  no vertex has more than $n+2$ chips; no two adjacent vertices have $n+2$ chips each; and no three mutually adjacent vertices have $n+1$ chips each.  Either our starting divisor $D$ satisfied these three conditions, or it failed in the third one and we may replace it with this new divisor.  In any case, we may assume that our divisor $D$ satisfies all three conditions.  We may also assume that at least one of the following two conditions holds: either $D= D_{\mathcal{O}}$ for some sourceless partial orientation; or $D$ has at most $n-1$ chips on each vertex.

As usual, pick $v$ according to Lemma \ref{lemma:rooks_lemma}.  The whole graph burns except possibly for vertices in at most three rows. Suppose for the sake of contradiction the whole graph does not burn.  We will consider three cases, namely when unburned vertices are spread out over one, two, or three rows.

\begin{itemize}

\item \noindent\textbf{Case 5.1}:  Suppose the unburned vertices are all contained in the same row, and let $u$ be the number of unburned vertices.  Each unburned vertex has $n-u+4$ burning edges incident to it, meaning that the row has at least $u(n-u+4)$ chips.   First consider $4\leq u\leq n-2$, so $n\geq 6$. The function $u(n-u+4)$ on this interval is minimized at an endpoint, and so has minimum $\min\{6n-12,4n\}$; this is larger than $4n-1$ for $n\geq 6$, so we know that $u\leq 3$.  If $u=1$, then the one unburned vertex must have $n+3$ chips, which is not allowed.  If $u=2$, then the two unburned vertices (which are adjacent) must each have $n+2$ chips, which is not allowed.  If $u=3$, then the three unburned vertices (which are mutually adjacent) must each have $n+1$ chips, which is not allowed.  Thus we have reached a contradiction.

\item \noindent\textbf{Case 5.2}:  Suppose the unburned vertices are spread out over the first two rows. Let $u_1$ and $u_2$ denote the number of unburned vertices in these rows, where $u_1\leq u_2$.

The first row has at most $\floor{\frac{4n-1}{2}}=2n-1$ chips. Each unburned vertex in the first row has at least $n-u_1+3$ incident burning edges, so the first row has at least $u_1(n-u_1+3)$ chips.  For $2\leq u_1\leq n-2$, this has minimum $\min\{2n+2,5n-10\}$, which is greater than $2n-1$ for $n\geq 5$.  Thus $u_1=1$.  The one unburned vertex $v_1$ must have $n-1+3=n+2$ chips on it (at least $n+2$ so as not to burn, and at most $n+2$ since this is the maximum allowed).

The second row then has at most $4n-1-(n+2)=3n-3$ chips.  The number of  burning edges incident to unburned vertices in this row is at least $u_2(n-u_2+4)-1$, meaning the row has at least that many chips.  For $3\leq u_2\leq n-2$, this has minimum $\min\{3n+2,6n-13\}$, which is larger than $3n-3$ for $n\geq 5$.  Thus we have $u_2=1$ or $u_2=2$. In either case, $v_1$ shares a column with an unburned vertex in the second row; otherwise $v_1$ would need $n-1+4=n+3$ chips, which is not allowed.

If $u_1=1$, call the unburned vertex in the second row $v_2$.  Then $v_1$ and $v_2$ need $n+2$ chips each so as not to burn, but this is not allowed since they are adjacent to each other.  If $u_2=2$, call the unburned vertices in the second row $v_2$ and $v_3$, where $v_2$ shares a column with $v_1$.
The vertices $v_1$, $v_2$, and $v_3$ must have at least $n+2$, $n+1$, and $n+2$ chips, respectively.  Either $D$ is of the form $D_{\mathcal{O}}$ for some sourceless partial orientation $\mathcal{O}$; or it is a divisor with at most $n-1$ vertices on each vertex.  The second clearly does not hold, and so $D=D_{\mathcal{O}}$.  We know that $\mathcal{O}$ must have every neighbor of $v_1$ and $v_3$ oriented towards them; but this means that at most $n+3-2=n+1$ of $v_2$'s neighbors can be oriented to it, meaning it can have at most $n$ chips, a contradiction.

\item \noindent\textbf{Case 5.3}:  Suppose the unburned vertices are spread out over the first three rows. Let $u_1$, $u_2$, and $u_3$ denote the number of unburned vertices in these rows, where $u_1\leq u_2\leq u_3$. Nearly identical arguments to those from Case 4.2 show that each row has exactly one unburned vertex, call them $v_1$, $v_2$, and $v_3$.  Each has at least $(n-1)+2=n+1$ incident burning edges, so each has at least $n+1$ chips.  If any of $v_1$, $v_2$, or $v_3$ is in its own column, it would need to have $n+3$ chips not to burn, which is more than is allowed on a single vertex.  Thus all three must be in a single column.  But we cannot have three mutually adjacent vertices each with $n+1$ chips, a contradiction.

\end{itemize}
In every one of our three cases, we have reached a contradiction.  Thus the entire graph $K_5\boxempty K_n$ burns, and so $r(D)<1$.  This completes the proof.
\end{proof}

In the course of this proof, we have showed that given a divisor $D$ of degree $n(m-1)-1$ coming from a sourceless partial orientation on $K_m\boxempty K_n$ (where $m\leq n$ and $m\leq 4$), we can choose a vertex $v$ such that running the burning process from Lemma \ref{lemma:burning} starting at $v$ makes the whole graph burn. However, as hinted at by our careful pre-processing of $D$ when $m=5$, this is \textbf{not} true for all rook's graphs.  Consider the divisor of degree $(5-1)\cdot 5-1=19$ pictured on the rook's graph $K_5\boxempty K_5$ on the left in Figure \ref{figure:5x5} (only the vertices of the graph are illustrated).  This divisor does arise from a sourceless partial orientation, namely the partial orientation with the directed edges pictured in the middle and right images in Figure \ref{figure:5x5}; the oriented edges are spread out over two copies of the graph for visibility.  However, no matter our choice of vertex $v$, the whole graph does not burn when we start the burning process from $v$, since the three vertices with $6$ chips each remain unburned.  Although our proof manages to fill in this gap for $K_5\boxempty K_n$, the combinatorics becomes more complicated as we increase $m$,  and so new techniques will need to be developed to push our results for rook's graphs further.

\begin{figure}[hbt]
   		 \centering
\includegraphics[scale=1]{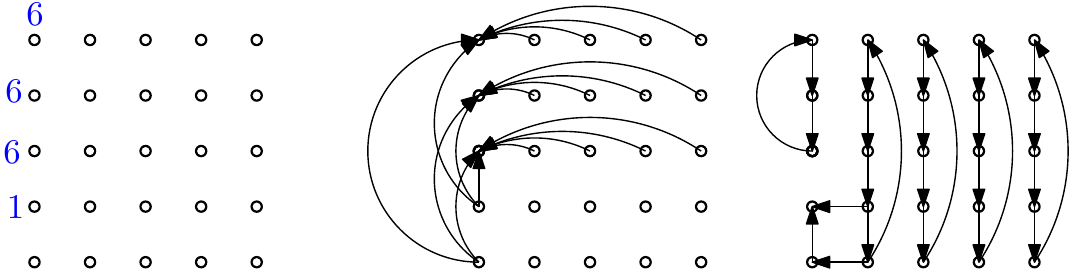}
	\caption{A divisor $D$ on $K_5\boxempty K_5$, and a sourceless partial orientation $\mathcal{O}$ such that $D=D_{\mathcal{O}}$}
	\label{figure:5x5}
\end{figure}

The final result of this section determines the gonality of products of the form $G\cart K_2$, where $G$ is a graph of genus $1$.

\begin{theorem}\label{theorem:genus_1_345}
If $G$ is a graph of genus $1$, then $\textrm{gon}(G\boxempty K_2)=\min\{|V(G)|,4\} $.
\end{theorem}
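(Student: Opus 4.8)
The upper bound is immediate from Proposition \ref{prop:upperbound}: since $g(G)=1$ forces $\gon(G)=2$ by Lemma \ref{lemma:g1}, and $\gon(K_2)=1$, we get $\gon(G\boxempty K_2)\le\min\{2\cdot 2,\,1\cdot|V(G)|\}=\min\{|V(G)|,4\}$. Write $n=|V(G)|$. Since $g(G)=|E(G)|-|V(G)|+1=1$, the graph $G$ is \emph{unicyclic}: a single cycle $C$ (possibly just a pair of parallel edges) with trees hanging off the vertices of $C$. Throughout, realize $G\boxempty K_2$ as two copies $G_1,G_2$ of $G$ joined by a perfect matching $M$, and note $g(G\boxempty K_2)=3n-2n+1=n+1$. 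If $n=2$ then $G=C_2$, so $G\boxempty K_2$ contains a cycle and is not a tree; hence $\gon(G\boxempty K_2)\ge 2=\min\{2,4\}$ by \cite[Lemma 1.1]{bn2}. So assume $n\ge 3$, put $k=\min\{n,4\}-1\in\{2,3\}$, and note $k\le n-1$. It suffices to show no effective divisor of degree $k$ has positive rank: then, since adding a chip to a winning divisor keeps it winning, there is no winning divisor of degree $\le k$, so $\gon(G\boxempty K_2)>k=\min\{n,4\}-1$, which together with the upper bound gives $\gon(G\boxempty K_2)=\min\{|V(G)|,4\}$.

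Let $D$ be effective with $\deg(D)=k$; we may assume $r(D)\ge 0$. As $\deg(D)=k\le n-1=g(G\boxempty K_2)-2$, Lemma \ref{lemma:spencer} lets us replace $D$ by the divisor $D_{\mathcal O}$ of a sourceless partial orientation $\mathcal O$; this $\mathcal O$ orients $2n+k$ of the $3n$ edges of $G\boxempty K_2$, so $D(w)\le\val(w)-1$ for every vertex $w$, no two adjacent vertices both attain this bound, and at most $k$ vertices carry chips. We now argue $r(D)<1$ by choosing a suitable chip-free vertex $v$ and running Dhar's burning algorithm of Lemma \ref{lemma:burning} from $v$ — iterating it, that is, firing the unburned set and re-burning, until the whole graph burns; by Lemma \ref{lemma:burning} this shows $r(D-(v))<0$, hence $r(D)<1$.

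To pick $v$: since $\deg(D)=k\le 3$, at least one copy, say $G_1$, carries at most one chip. If $G_1$ is chip-free, ignite any vertex of $G_1$: all of $G_1$ burns (it is connected and chip-free), hence all of $M$ burns, feeding one burning edge into every vertex of $G_2$; every chip-free vertex of $G_2$ then burns, and it remains to consume the $\le k$ chipped vertices of $G_2$. Here one uses that a chipped vertex $w=(x,2)$ has its matching edge burning together with each of its chip-free $G_2$-neighbors, that $D(w)\le\deg_G(x)$, the no-two-adjacent-tight-vertices condition, and that $G$ is connected with $n\ge 3$ vertices (so the $\le k$ chipped vertices form a proper subset of $G_2$, and hence some chipped vertex always has an already-burned neighbor), to cascade the fire through all chipped vertices — or, in the stubborn configurations, to fire the unburned set once or twice more and then finish. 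If instead both copies carry a chip (so $k\ge 2$ and the split is $1{+}1$ or $1{+}2$), ignite a chip-free vertex of the copy bearing a single chip, chosen far from the chips (on $C$ or deep in a chip-free branch), and run the analogous analysis; with only $k\le 3$ chips spread over the two copies there is always a large chip-free connected region whose ignition crosses $M$ and, after at most a couple of firing rounds, engulfs the whole graph. In every case $r(D)<1$, completing the lower bound.

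The technical heart — and the step I expect to be the main obstacle — is controlling the burning when the chips cluster near the cycle $C$ or on mutually adjacent vertices: for instance two chips on a single valence-$3$ vertex, or the configuration $D=2(x_2,2)+(x_1,2)$ when $C$ is the double edge $x_1x_2$, where a single burning pass provably stalls on $\{(x_1,2),(x_2,2)\}$ and one must fire that pair (producing an equivalent, more spread-out divisor) before re-burning succeeds. In such cases one either shows by a short edge-counting argument on $\mathcal O$ — in the style of the $m=5$ analysis in the proof of Theorem \ref{theorem:rooks} — that the offending configuration is not realizable by any sourceless partial orientation of $G\boxempty K_2$ (so that $r(D)=-1$ to begin with), or one bounds the number of firing rounds needed using the unicyclic shape of $G$. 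Since for $n\in\{3,4\}$ there are only finitely many unicyclic $G$ (and $C_3\boxempty K_2$ has treewidth $3$, so $\gon(C_3\boxempty K_2)\ge 3$ directly by Proposition \ref{ref:treewidth}), it may be cleanest to settle those few small graphs by hand and run the general burning argument only for $n\ge 5$, where the extra vertices leave room to start the fire far from the chips.
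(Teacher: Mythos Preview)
Your outline has the right shape, but it is not a proof: the burning argument that carries the lower bound is asserted rather than executed. You write that the fire will ``cascade \ldots\ through all chipped vertices --- or, in the stubborn configurations, [one fires] the unburned set once or twice more,'' and your final paragraph explicitly flags the hard cases (chips clustered on or near the cycle, e.g.\ $D=2(x_2,2)+(x_1,2)$ when $C$ is a double edge) as obstacles you ``expect'' can be handled by one of two methods you do not carry out. That is exactly where the content is: when all three chips sit on $C''$, or when two sit on a high-valence cycle vertex, the fire from $G_1$ can stall in several genuinely different ways, and each needs its own argument. A proposal that names the obstacle and lists possible attacks is not yet a proof.

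There is also a technical slip in your use of Lemma~\ref{lemma:burning}. That lemma only says that if the whole graph burns in a \emph{single} pass from $v$, then the debt in $D-(v)$ cannot be eliminated. Iterating --- firing the unburned set and re-burning --- computes the $v$-reduced form of $D$, but firing a set not containing $v$ can deposit chips on $v$; so ``the graph eventually burns'' does not by itself yield $r(D-(v))<0$. You would need to track that $v$ never acquires a chip across the iterations, or else arrange matters so that a single pass suffices.

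The paper's proof differs from yours in precisely these two places. Rather than invoking sourceless partial orientations (whose constraint $D(w)\le\val(w)-1$ is weak here, since cycle vertices with attached trees can have high valence in $G\boxempty K_2$), it preprocesses $D$ by explicit chip-firing moves tailored to the unicyclic structure: it arranges that $D$ has a chip on $C'\cup C''$, no two chips on any $2$-valent vertex, no chips on a pair of parallel off-cycle vertices, and no three chips on a $3$-valent cycle vertex. It then runs a \emph{single} burning pass from a chip-free vertex of $C'$ and carries out a concrete two-case split on whether $C''$ catches fire, with a short separate treatment of non-simple $G$ and of the three graphs with $|V(G)|\le 3$. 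Your suggestion to settle small $n$ by hand is in fact what the paper does; but the case analysis for $n\ge 4$ still has to be written out in full, and the sourceless-orientation framing does not appear to shorten it.
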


 We remark that in some instances this result follows from a treewidth argument, such as with $C_5\cart K_2$; but not in others, such as with $C_4\cart K_2$.  Our proof will work in all cases.

\begin{proof}
 Any genus $1$ graph has gonality $2$ by Lemma \ref{lemma:g1}, so by Proposition \ref{prop:upperbound} we have $\textrm{gon}(G\boxempty K_2)\leq \min\{|V(G)|\cdot 1,2\cdot 2\}=\min\{|V(G)|,4\}$.  It remains to show that $\textrm{gon}(G\boxempty K_2)\geq\min\{|V(G)|,4\} $.
 
 First assume that $|V(G)|\leq 3$.  There are three possibilities for $G$:  either $G=K_3$; or $G$ has two vertices, joined by two edges; or $G$ is the previous graph with a third vertex attached by a single edge to one of the other two vertices.  If $G=K_3$, then $\gon(K_3\cart K_2)=3$ by Proposition \ref{prop:tree_complete}.  If $G$ has two vertices joined by two edges, then $\gon(G\cart K_2)\geq 2$, since $G\cart K_2$ is not a tree; thus $\gon(G\cart K_2)=2$.  Finally, if $G$ is the third possible graph, suppose for the sake of contradiction that $\gon(G\cart K_2)=2$.  We illustrate all effective divisors of degree $2$ on $G\cart K_2$ in Figure \ref{figure:list_of_divisors}, up to the natural symmetry of switching  the two copies of $G$. For one member $D$ of each equivalence class we label a vertex $v$ such that the debt in $D-(v)$ cannot be eliminated according to Lemma \ref{lemma:burning}, a contradiction to $\gon(G\cart K_2)=2$.  Thus $\gon(G\cart K_2)=3=|V(G)|$.

\begin{figure}[hbt]
   		 \centering
\includegraphics[scale=0.8]{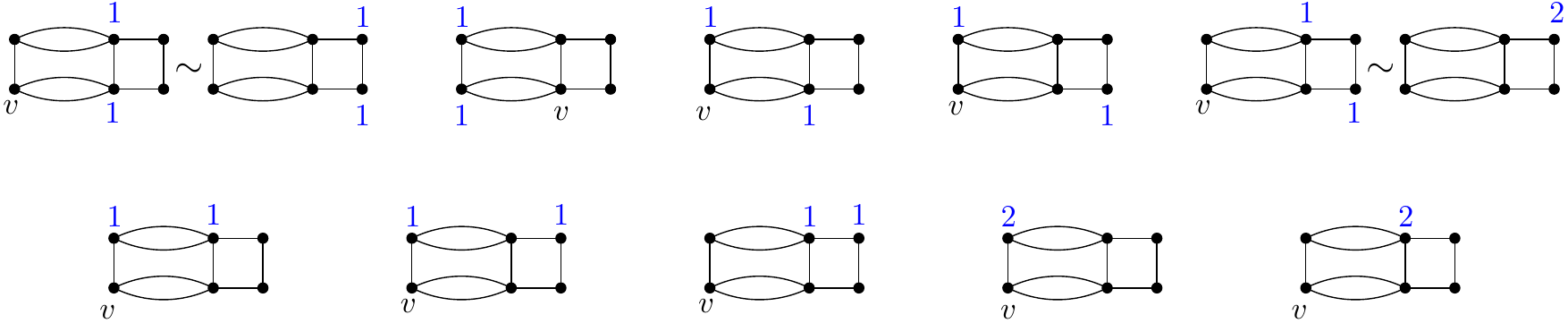}
	\caption{Divisors on $G\cart K_2$ of degree $2$, none have which have positive rank}
	\label{figure:list_of_divisors}
\end{figure}

Having dealt with the case of $|V(G)|\leq 3$, we now assume that $|V(G)|\geq 4$.  We may view $G\boxempty K_2$ as the union of two copies of $G$, with matching vertices connected by edges; we will refer to these two copies of $G$ as $G'$ and $G''$, and any vertices $v'$ and $v''$ corresponding to the same vertex $v$ of $G$ as \emph{parallel vertices}.  Let $C$ denote the unique cycle of $G$, and let $C'$ and $C''$ denote the corresponding cycles in $G'$ and $G''$, respectively.

Suppose for the sake of contradiction that there exists an effective divisor $D$ on $G\boxempty K_2$ of degree $3$ with positive rank.  Since $D$ has positive rank, we may assume that $D$ places at least one chip on  $C'\cup C''$ (indeed, on any vertex of $C'\cup C''$ we choose).  Then, if $D$ has two chips on any $2$-valent vertex, fire that vertex; this will not result in another $2$-valent vertex having two chips, since the two chips go to different vertices and the third chip is on a vertex of $C'\cup C''$, all of whose vertices have valence greater than 2.  After this, suppose that $D$ puts at least one chip on both of two parallel vertices $v'$ and $v''$ corresponding to a vertex $v$ not on the cycle $C$ of $G$. Then we may chip-fire those two vertices, together with all vertices not in the same component of $(G\boxempty K_2)-\{v',v''\}$ as the cycles $C'$ and $C''$.  Applying this process enough times, we will move the two chips to $C'\cup C''$.  This process is illustrated in Figure \ref{figure:moving_parallel}. Finally, if at this point any vertex of $C'\cup C''$ with valence $3$ has exactly $3$ chips, we will fire that vertex; this does not interfere with any of our other assumed properties, since any $3$-valent vertex on $C'\cup C''$ is incident only to other vertices on $C'$ and $C''$. Thus we may assume that $D$ places at least one chip  on $C'\cup C''$; that $D$ does not place $2$ chips on a vertex of valence $2$;  that no two parallel vertices away from $C'\cup C''$ both have a chip; and that no vertex of $C'\cup C''$ of valence $3$ has $3$ chips.

\begin{figure}[hbt]
   		 \centering
\includegraphics[scale=0.8]{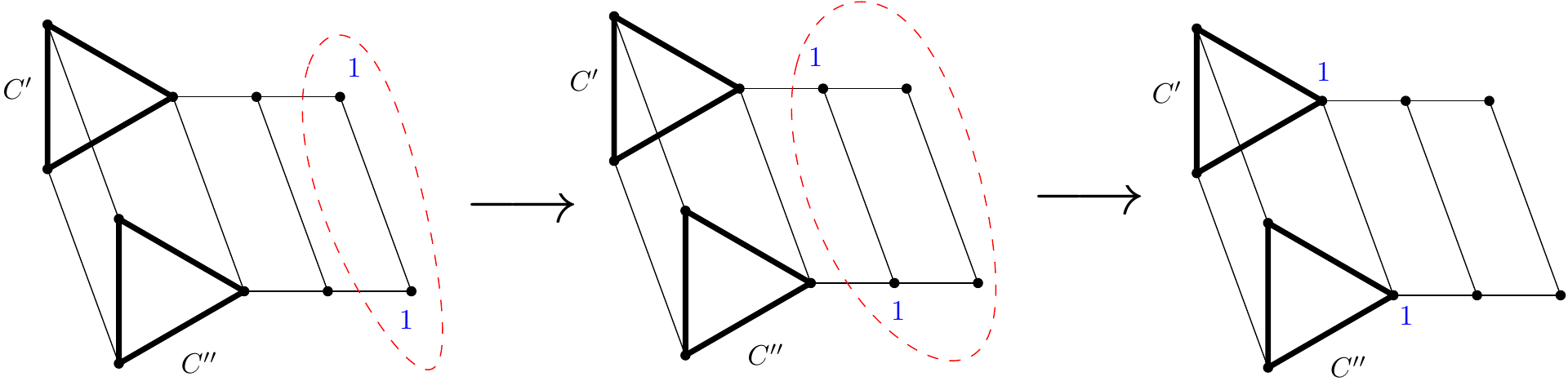}
	\caption{Moving chips on parallel vertices towards $C'$ and $C''$; chip-firing the circled vertices yields the next configuration}
	\label{figure:moving_parallel}
\end{figure}

Since $\deg(D)=3$, one of $G'$ and $G''$ has at most one chip on it; say it is $G'$.  Choose any vertex on $C'$ that does not have a chip on it, and run the burning process from Lemma \ref{lemma:burning} starting from that vertex.  Since $C'$ has at most one chip on it, the whole cycle $C'$ will burn.  We now deal with two cases:  where $C''$ burns solely based on $C'$ burning, and when it does not.  For the moment, assume that $G$ is a simple graph.

\begin{itemize}
    \item Assume $C''$ burns.  Let $v'$ and $v''$ be parallel vertices not on $C'\cup C''$.  They cannot both have a chip, so once both of them receive a burning edge (besides $v'v''$), then one burns and the second will burn unless it has $2$ chips on it. Thus fire will spread through the whole graph, unless some vertex off of $C'$ and $C''$ has $2$ chips on it. If such a vertex exists, call it $v''$, and note that $v''$ cannot be $2$-valent since it has two chips. There is one chip on $C'\cup C''$, say on the vertex $w$.  Since $\deg(D)=3$, there are no chips off of $v''$ and $w$.  Note that $(G\boxempty K_2)-\{v''\}$ is connected, and since $w$ burns, so does all of $(G\boxempty K_2)-\{v''\}$. Then we know $v''$ will burn as well:  it has $2$ chips, and at least $3$ incident burning edges, as illustrated in Figure \ref{figure:tadpole_exception}.  Thus the whole graph burns.

\begin{figure}[hbt]
   		 \centering
\includegraphics[scale=0.8]{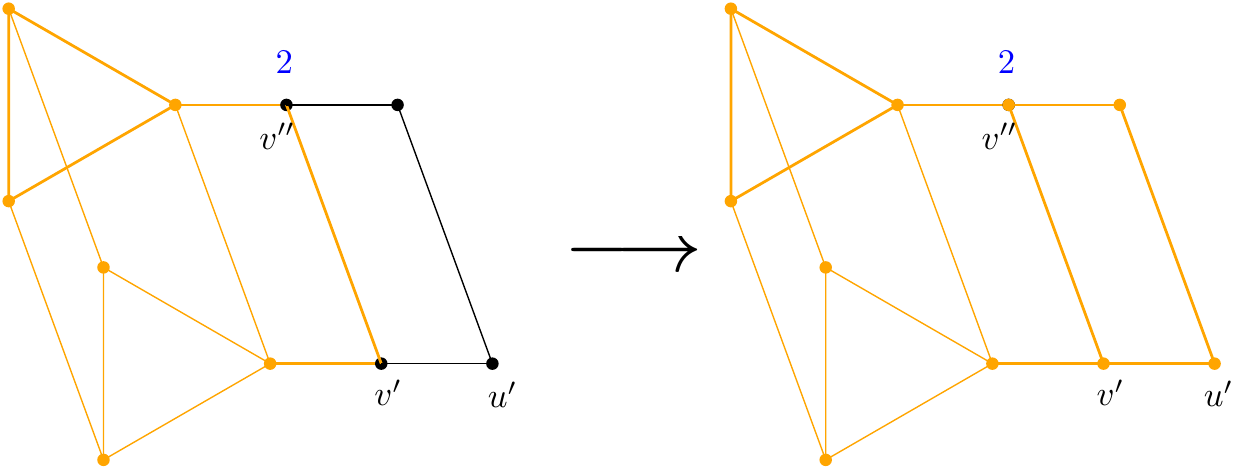}
	\caption{Even with $2$ chips on $v''$, the whole graph eventually burns.  It is important we've assumed a chip is on $C'\cup C''$: otherwise $u'$ could have a chip and the fire could be blocked.}
	\label{figure:tadpole_exception}
\end{figure}

    \item  Assume $C''$ does not immediately burn based on $C'$ burning.   We claim that then all $3$ chips must be on $C''$, and that $G$ is not simply a cycle.  If every vertex of $C''$ has a chip, then indeed $C''$ has all three chips (and we know that $C$ is a triangle, meaning $G\supsetneq C$ since $G$ has at least $4$ vertices).  If on the other hand at least one vertex of $C''$ lacks a chip, then that vertex burns due to an edge from $C'$, and the fire will spread both ways around $C''$, starting from that vertex.  Any configuration short of having $3$ chips on a single vertex result in all of $C''$ burning. Thus since $C''$ does not burn, we know there must be $3$ chips on a vertex $v''$ in $C''$. Since $D$ does not place $3$ chips on any $3$-valent vertex of $C'\cup C''$, we know that $v''$ has valence at least $4$, so it corresponds to a vertex $v$ in $G$ of valence at least $3$; it follows that $G\supsetneq C$.

    Note that $(G\boxempty K_2)-C''$ is connected.  Since $C'$ is on fire, and since there are no chips off of $C''$, all of $(G\boxempty K_2)-C''$ burns.  If each vertex in $C''$ has a chip, then some vertex has an additional burning edge coming from a vertex in $G''-C''$, so that vertex in $C''$ will burn, and from there all $C''$ burns.  If a single vertex $v''$ in $C''$ has $3$ chips, then at this point the whole graph except for $v''$ is burning; as $\deg(v'')\geq 4$, this vertex burns as well.
\end{itemize}
In both cases, the entire graph burns, which by Lemma \ref{lemma:burning} contradicts $r(D)>0$.  

Now assume that $G$ is not a simple graph. It follows that $C$ must be a cycle of length $2$ connecting two vertices.  The argument from the first case carries through when $G$ is not simple.  The second case falls through when $G$ is not simple precisely when the three chips are placed as follows: if there is one chip on each vertex $u''$ and $w''$ of the cycle $C''$, and one chip on a vertex vertex $v'$ on $G'$ that is not on $C'$, but is instead incident to a vertex of $C'$.  Indeed, the whole graph may not burn based on this divisor $D$. If we have this divisor $D$ on $G\cart K_2$, choose a vertex $v$ of $G\cart K_2$  in the following way:  if  $(G\cart K_2)-\{v',v''\}$ has vertices off of $ C''$ incident to $v''$, choose $v$ to be such a vertex; if no such vertex exists, then since $|V(G)|\geq 4$ we may choose $v$ off of $C'\cup C''$ on a component of $(G\cart K_2)-(C'\cup C'')$ not containing $v''$.  The first case leads to $v'$ and then all of $C''$ burning, and the second case leads to all of $C'\cup C''$ and then $v'$ burning.  These two cases are illustrated in Figure \ref{figure:g1_not_simple}.  Thus we have reached our desired contradiction for non-simple graphs as well.
\end{proof}

\begin{figure}[hbt]
   		 \centering
\includegraphics[scale=1]{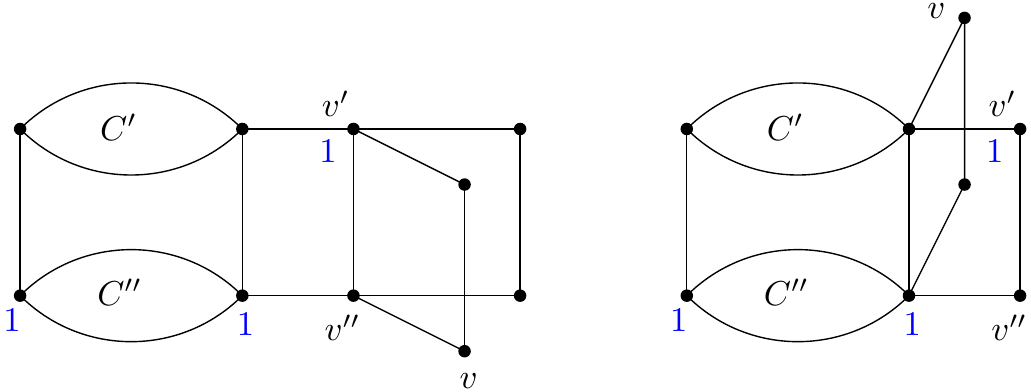}
	\caption{Two special cases for choosing $v$ when $G$ is not simple}
	\label{figure:g1_not_simple}
\end{figure}

We close this section by remarking that there are several open conjectures that would follow if the products in question have the expected gonality from Question \ref{conjecture:product}: that the gonality of the stacked prism $Y_{2n,n}=P_{n}\boxempty C_{2n}$ is $2n$, and the gonality of the toroidal grids $T_{n,n}=C_n\cart C_n$ and $T_{n,n+1}=C_n\cart C_{n+1}$ is $2n$ \cite{treewidth}; that the gonality of the $m \times n \times l$ grid $P_m\cart P_n\boxempty P_l$ is $mnl/\max\set{m,n,l}$  \cite{db}; and that the gonality of the $n$-dimensional cube $Q_n=(K_2)^{\boxempty n}$ is $2^{n-1}$  \cite{vddbg}.  In fact, most of these conjectures would follow from a weaker version of the result, namely that the equation in  Question \ref{conjecture:product} holds if one of the graphs is a path.

\section{The gonality conjecture for products of graphs}
\label{section:gonalityconjecture}

Using our upper bound from Proposition \ref{prop:upperbound}, we will show in this section that  $G\boxempty H$ satisfies the inequality in Conjecture \ref{conjecture:gonality} for any graphs $G$ and $H$ with two or more vertices each.  We first state the following useful lemma.

\begin{lemma}\label{lemma:gon_g}
Let $G$ be a graph of genus $g$.  Then $\textrm{gon}(G)\leq g+1$.  Moreover, if $g\geq 2$, then $\textrm{gon}(G)\leq g$.
\end{lemma}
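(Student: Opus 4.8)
The whole argument runs through the Riemann--Roch theorem for graphs (Theorem~\ref{theorem:rr}) applied to the canonical divisor $K$. For the first bound, take \emph{any} divisor $D$ on $G$ with $\deg(D)=g+1$. Then $r(D)-r(K-D)=\deg(D)-g+1=2$, and since every rank is at least $-1$ we get $r(D)\geq 2+(-1)=1$. Hence $D$ is a winning divisor of degree $g+1$ and $\gon(G)\leq g+1$; this already handles all $g$, and the bound is sharp for $g=0$ (a tree) and for $g=1$ by Lemma~\ref{lemma:g1}.

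For the sharper bound when $g\geq 2$, the plan is to exhibit an effective divisor $D$ with $\deg(D)=g$ and $r(D)\geq 1$. The key observation is that $K$ itself is equivalent to an effective divisor: applying Riemann--Roch to $D=K$ gives $r(K)=\deg(K)-g+1+r(0)=(2g-2)-g+1+0=g-1$, where $r(0)=0$ because the zero divisor is effective while no divisor of negative degree can be equivalent to an effective one. Since $g\geq 2$ we have $r(K)\geq 1$, so $K\sim\widetilde{K}$ for some effective divisor $\widetilde{K}$, and $\deg(\widetilde{K})=2g-2\geq g$.

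Now choose an effective divisor $D\leq\widetilde{K}$ (coordinatewise) with $\deg(D)=g$; such a $D$ exists precisely because $0\leq g\leq\deg(\widetilde{K})$, so one can simply delete $g-2$ chips from $\widetilde{K}$. Then $\widetilde{K}-D$ is effective, so $r(K-D)=r(\widetilde{K}-D)\geq 0$. Feeding this back into Riemann--Roch, $r(D)-r(K-D)=\deg(D)-g+1=1$, hence $r(D)=1+r(K-D)\geq 1$. So $D$ is a winning divisor of degree $g$, and $\gon(G)\leq g$.

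The proof is short; the only place the hypothesis $g\geq 2$ enters is the inequality $\deg(\widetilde{K})=2g-2\geq g$, which is exactly what guarantees there is enough ``room'' inside the effective representative of $K$ to carve out a sub-divisor of the target degree $g$. I do not anticipate a serious obstacle, though one should be slightly careful that $r(0)=0$ and that rank is invariant under the equivalence $K\sim\widetilde{K}$ (so that $r(K-D)=r(\widetilde{K}-D)$), both of which are immediate from the definitions.
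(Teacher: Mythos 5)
Your proof is correct. The first bound is argued exactly as in the paper: apply Riemann--Roch to a degree $g+1$ divisor and use $r(K-D)\geq -1$. For the sharper bound when $g\geq 2$, however, you take a genuinely different route. The paper quotes the fact that $K$ has rank $g-1$ and then invokes \cite[Lemma 2.7]{baker} (removing a vertex drops rank by at most one) to subtract $g-2$ vertices from $K$ one at a time, landing on a divisor of degree $g$ and rank at least $1$. You instead derive $r(K)=g-1$ yourself from Riemann--Roch (using $r(0)=0$), pass to an effective representative $\widetilde{K}$, carve out an effective sub-divisor $D\leq\widetilde{K}$ of degree $g$, and apply Riemann--Roch a second time, using $r(K-D)=r(\widetilde{K}-D)\geq 0$ to conclude $r(D)\geq 1$. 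Both arguments are sound and both hinge on $\deg(K)=2g-2\geq g$ when $g\geq 2$; yours has the advantage of being self-contained modulo Riemann--Roch (no external rank-drop lemma is needed), at the modest cost of the extra observations that $r(0)=0$, that rank is an invariant of the equivalence class, and that a degree-$g$ effective sub-divisor of $\widetilde{K}$ exists. This is essentially the classical ``special divisor'' proof that a genus-$g$ curve has gonality at most $g$, transported to graphs.
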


\begin{proof}
For the first claim we use the following standard Riemann-Roch argument.  Let $D$ be any effective divisor of degree $g+1$ on $G$.  Since $r(K-D)\geq -1$,  Theorem \ref{theorem:rr} tells us that $r(D)+1\geq r(D)-r(K-D)=\deg(D)+1-g=g+1+1-g=2$, so $r(D)\geq 1$.  This means $\textrm{gon}(G)\leq \deg(D)=g+1$.

Now assume $g\geq 2$.  By \cite{bn}, the  divisor $K$ has degree $2g-2$ and rank $g-1$.  By \cite[Lemma 2.7]{baker}, given a divisor $D$ of rank $r\geq 0$, for any vertex $v$ the divisor $D-(v)$ has rank at least $r-1$.  Since $g\geq 2$, we can iteratively subtract $g-2$ vertices from $K$ to obtain a divisor of degree $2g-2-(g-2)=g$ that has rank at least $g-1-(g-2)=1$.  It follows that $\textrm{gon}(G)\leq g$.
\end{proof}

 It will also be helpful to have some notation for non-simple graphs with $2$ or $3$ vertices.  Any such graph must have $K_2$, $P_3$, or $K_3$ as its underlying simple graph.  For $n\geq 2$, the \emph{banana graph} $B_n$ is the graph with two vertices and $n$ edges between them.  For $m,n\geq 1$ with $\max\{m,n\}\geq 2$, the \emph{double banana graph} $B_{m,n}$ is the graph with three vertices, the first two connected by $m$ edges and the second two connected by $n$ edges.  For  For $\ell,m,n\geq 1$ with $\max\{\ell,m,n\}\geq 2$, the \emph{banana loop graph} $L_{\ell,m,n}$ is a graph with three vertices, where the numbers of edges between the three pairs of vertices are $\ell$, $m$, and $n$.  Several examples of these graphs are illustrated in Figure \ref{figure:multigraphs}, along with divisors of rank $1$.  One can verify that $\gon(B_n)=2$; that $\gon(B_{m,n})=2$ if $\min\{m,n\}=1$ or $m=n=2$, and $\gon(B_{m,n})=3$ otherwise; and that $\gon(L_{\ell,m,n})=2$ if two of $\ell,m,$ and $n$ are equal to $1$, and $\gon(L_{\ell,m,n})=3$ otherwise.  We have already seen $B_{2,1}\cart B_{2,1}$ in Figure \ref{figure:counterexample}, which illustrated that $\gon(B_{2,1}\cart B_{2,1})\leq 5$.

\begin{figure}[hbt]
   		 \centering
\includegraphics[scale=1]{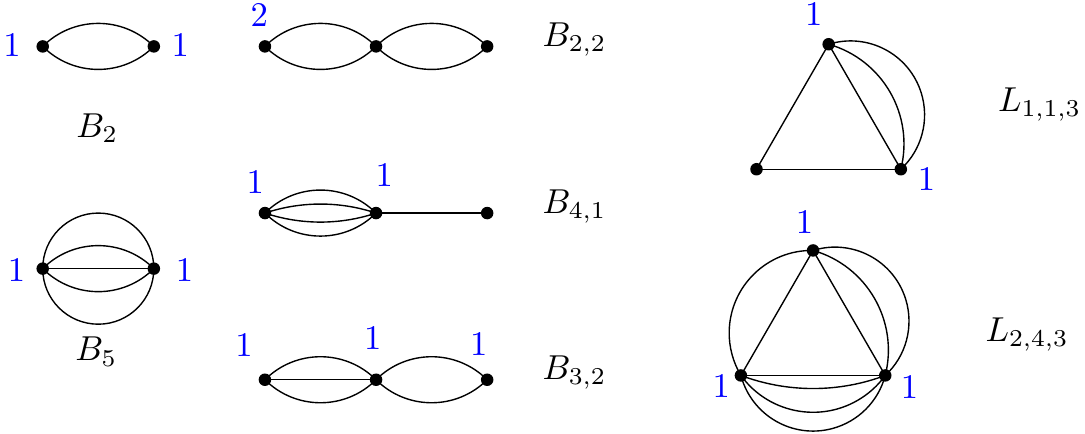}
	\caption{Several multigraphs with at most $3$ vertices, each with a divisor of rank $1$}
	\label{figure:multigraphs}
\end{figure}

We are now ready to prove that any nontrivial graph product $G\boxempty H$ satisfies $\textrm{gon}(G\boxempty H)\leq\floor{\frac{g(G\boxempty H)+3}{2}}.$

\begin{proof}[Proof of Theorem \ref{theorem:main}]
Let $G$ and $H$ be graphs, where $G$ has $v_1$ vertices and $e_1$ edges, and  $H$ has $v_2$ vertices and $e_2$ edges, where $v_1,v_2\geq 2$. The product graph $G\boxempty H$ then has genus $e_1v_2+e_2v_1-v_1v_2+1$. Without loss of generality we will assume that $e_2v_1\leq e_1v_2$, which implies that $e_2v_1\leq \frac{e_1v_2+e_2v_1}{2}$.

Assume for the moment that  either $v_2\geq 4$, or $g(H)\geq 2$.  By Proposition \ref{prop:upperbound} we have $\textrm{gon}(G\boxempty H)\leq v_1\textrm{gon}(H)$.  By Lemma \ref{lemma:gon_g} we have $v_1\textrm{gon}(H)\leq v_1\left(g(H)+1\right)=v_1(e_2-v_2+2)=e_2v_1-v_1v_2+2v_1$.  If $v_2\geq 4$, then we have
\begin{align*}
    \frac{g(G\boxempty H)+3}{2}-\textrm{gon}(G\boxempty H)\,\geq\, &\frac{g(G\boxempty H)+3}{2}-(e_2v_1-v_1v_2+2v_1)
    \\\,=\,& \frac{e_1v_2+e_2v_1-v_1v_2+4}{2}-e_2v_1+v_1v_2-2v_1
    \\\,=\,& \left( \frac{e_1v_2+e_2v_1}{2}-e_2v_1\right)+\frac{v_1v_2}{2}-2v_1+2
    \\\,\geq\,& \frac{v_1v_2}{2}-2v_1+2
    \\\,=\,&v_1\left(\frac{v_2}{2}-2\right)+2
    \\\,\geq\,&v_1\left(\frac{4}{2}-2\right)+2= 2.
\end{align*}  On the other hand, if $g(H)\geq 2$, then by Lemma \ref{lemma:gon_g} we have $\gon(H)\leq g(H)$.  It follows that $\gon(G\cart H)\leq v_1\gon(H)\leq v_1g(H)=e_2v_1-v_1v_2+v_1$.  Thus we have
\begin{align*}
    \frac{g(G\boxempty H)+3}{2}-\textrm{gon}(G\boxempty H)\,\geq\, & \frac{e_1v_2+e_2v_1-v_1v_2+4}{2}-e_2v_1+v_1v_2-v_1
    \\\,=\,& \left( \frac{e_1v_2+e_2v_1}{2}-e_2v_1\right)+\frac{v_1v_2}{2}-v_1+2
    \\\,\geq\,& \frac{v_1v_2}{2}-v_1+2
    \\\,=\,&v_1\left(\frac{v_2}{2}-1\right)+2
    \\\,\geq\,&v_1\left(\frac{2}{2}-1\right)+2= 2.
\end{align*}
In both of these cases, we have that $\textrm{gon}(G\boxempty H)<\frac{g(G\boxempty H)+3}{2}$, and in fact that $\textrm{gon}(G\boxempty H)<\lfloor \frac{g(G\boxempty H)+3}{2}\rfloor$, since the gap between $\textrm{gon}(G\boxempty H)$ and $\frac{g(G\boxempty H)+3}{2}$ is at least $2$.

We may now assume $v_2<4$, and that $g(H)\leq 1$. We might be tempted to say that by symmetry, $v_1<4$ as well; however, we already used the symmetry of switching $G$ and $H$ when we assumed $e_2v_1\leq e_1v_2$, so we have no control on $v_1$ at the moment. Since $v_2<3$, we know that $H$ is $K_2$, $P_3$, $K_3$, $B_n$, $B_{m,n}$, or $L_{\ell,m,n}$ for some integers $\ell$, $m$ and $n$. Note that $g(B_n)=n-1$, $g(B_{m,n})=m+n-2$, and $L_{\ell,m,n}=\ell+m+n-2$.  Since we've assumed $g(H)\leq 1$, the only non-simple possibilities for $H$ are $B_2$ and $B_{2,1}$.  Thus we have $H\in \{K_2,P_3,K_3,B_2,B_{2,1}\}$.  We will handle $K_3$ and $B_{2,1}$ together, and the other three cases separately.

Let $H=K_2$. Then $v_2=2$ and $e_2=1$. We have $g(G\boxempty K_2)=e_1v_2+e_2v_1-v_1v_2+1=2e_1+v_1-2v_1+1=2e_1-v_1+1$, so  $\frac{g(G\boxempty K_2)+3}{2}=e_1-\frac{v_1}{2}+2$.  By Proposition \ref{prop:upperbound} we have $\textrm{gon}(G\boxempty K_2)\leq\min\{v_1,2\textrm{gon}(G)\}$.  From the bound $\textrm{gon}(G\boxempty K_2)\leq v_1$, we deduce
\begin{align*}
    \frac{g(G\boxempty K_2)+3}{2}-\textrm{gon}(G\boxempty K_2)\,\geq\, & e_1-\frac{v_1}{2}+2-v_1
    \\\,=\, & e_1-\frac{3v_1}{2}+2.
\end{align*}
From the bound $\textrm{gon}(G\boxempty K_2)\leq 2\textrm{gon}(G)\leq 2(g(G)+1)=2e_1-2v_1+4$, we deduce
\begin{align*}
    \frac{g(G\boxempty K_2)+3}{2}-\textrm{gon}(G\boxempty K_2)\,\geq\, & e_1-\frac{v_1}{2}+2-2e_1+2v_1-4
    \\\,=\, & -e_1+\frac{3v_1}{2}-2.
\end{align*}
At least one of $e-\frac{3v_1}{2}+2$ and $-e+\frac{3v_1}{2}-2$ is nonnegative, so we have   $ \frac{g(G\boxempty K_2)+3}{2}-\textrm{gon}(G\boxempty K_2)\geq 0$.  Since $\floor{0}=0$ and since $ \left\lfloor\frac{g(G\boxempty K_2)+3}{2}-\textrm{gon}(G\boxempty K_2)\right\rfloor= \left\lfloor\frac{g(G\boxempty K_2)+3}{2}\right\rfloor-\textrm{gon}(G\boxempty K_2)$, we have $\textrm{gon}(G\boxempty K_2)\leq\floor{\frac{g(G\boxempty K_2)+3}{2}}$.

Let $H=P_3$,  so that $v_2=3$ and $e_2=2$. If $G$ is a tree, then since $P_3$ is a tree we know by Proposition \ref{prop:tree_tree} that $\textrm{gon}(G\boxempty P_3)=\min\{3,v_1\}$. Since any tree has one more vertex than it has edges, we then have $\left\lfloor\frac{g(G\boxempty P_3)+3}{2}\right\rfloor=\left\lfloor\frac{e_1v_2+e_2v_1-v_1v_2+4}{2}\right\rfloor=\left\lfloor\frac{3(v_1-1)+2v_1-3v_1+4}{2}\right\rfloor=\left\lfloor\frac{2v_1+1}{2}\right\rfloor=v_1\geq\min\{3,v_1\}$, so the gonality conjecture holds.  Thus we may assume that $G$ is not a tree.   We have $g(G\boxempty P_3)=e_1v_2+e_2v_1-v_1v_2+1=3e_1+2v_1-3v_1+1=3e_1-v_1+1$, so    $\frac{g(G\boxempty P_3)+3}{2}=\frac{3e_1}{2}-\frac{v_1}{2}+2$.  By Proposition \ref{prop:upperbound} we have $\textrm{gon}(G\boxempty P_3)\leq\min\{v_1,3\textrm{gon}(G)\}\leq v_1$.   
Then we have
\begin{align*}
    \frac{g(G\boxempty P_3)+3}{2}-\textrm{gon}(G\boxempty P_3)\,\geq\, & \frac{3e_1}{2}-\frac{v_1}{2}+2-v_1
    \\\,\geq\, & \frac{3}{2}(e_1-v_1)+2.
\end{align*} 
Since $G$ is not a tree, we know $e_1-v_1\geq 0$, so we have $\frac{g(G\boxempty P_3)+3}{2}-\textrm{gon}(G\boxempty P_3)\geq 2$; it follows that $\textrm{gon}(G\boxempty P_3)<\floor{\frac{g(G\boxempty P_3)+3}{2}}$.

Let $H=K_3$, so that $v_2=3$ and $e_2=3$.  We have $g(G\boxempty K_3)=e_1v_2+e_2v_1-v_1v_2+1=3e_1+3v_1-3v_1+1=3e_1+1$, so   $\frac{g(G\boxempty K_3)+3}{2}=\frac{3e_1}{2}+2$.  By Proposition \ref{prop:upperbound} we have $\textrm{gon}(G\boxempty K_3)\leq\min\{2v_1,3\textrm{gon}(G)\}$. 
The bound $\textrm{gon}(G\boxempty K_3)\leq 2v_1$ implies
\begin{align*}
    \frac{g(G\boxempty K_3)+3}{2}-\textrm{gon}(G\boxempty K_3)\,\geq\, & \frac{3e_1}{2}+2-2v_1.
\end{align*} 
The bound $\textrm{gon}(G\boxempty K_3)\leq 3\textrm{gon}(G)\leq 3(g(G)+1)=3e_1-3v_1+6$ implies
\begin{align*}
    \frac{g(G\boxempty K_3)+3}{2}-\textrm{gon}(G\boxempty K_3)\,\geq\, & \frac{3e_1}{2}+2-3e_1+3v_1-6.
    \\=& -\frac{3e_1}{2}-4+3v_1
    \\\,\geq\, & -\frac{3e_1}{2}-2+2v_1,
\end{align*}
where we use the fact that $v_1\geq 2$.  At least one of $\frac{3e_1}{2}+2-2v_1$ and $-\frac{3e_1}{2}-2+2v_1$ is nonnegative, so we have  $\frac{g(G\boxempty K_3)+3}{2}-\textrm{gon}(G\boxempty K_3)\geq 0$. As when $H=K_2$, it follows that $\textrm{gon}(G\boxempty K_3)\leq \floor{\frac{g(G\boxempty K_3)+3}{2}}$.  Note that the exact same arguments suffice to show that $\textrm{gon}(G\boxempty B_{2,1})\leq \floor{\frac{g(G\boxempty B_{2,1})+3}{2}}$, since $B_{2,1}$ also has $3$ vertices, $3$ edges, and gonality equal to $2$.

Finally, let $H=B_2$.  We have $v_2=2$ and $e_2=2$, so $g(G\cart H)=e_1v_2+e_2v_1-v_1v_2+1=2e_1+2v_1-2v_1+1=2e_1+1$, meaning $\frac{g(G\cart H)+3}{2}=e_1+2$.  We have $\gon(G\cart B_2)\leq\min\{2\gon(G),2v_1\}$, so $\gon(G\cart B_2)\leq 2v_1$, and thus
\[ \frac{g(G\boxempty B_2)+3}{2}-\textrm{gon}(G\boxempty B_2)\,\geq\,  e_1+2-2v_1.\]
The bound $\gon(G\cart B_2)\leq 2\gon(G)\leq 2(g(G)+1)=2e_1-2v_1+4$ gives us
\[ \frac{g(G\boxempty B_2)+3}{2}-\textrm{gon}(G\boxempty B_2)\,\geq\,  e_1+2-(2e_1-2v_1+4)=-e_1-2+2v_1.\]
At least one of $e_1+2-2v_1$ and $-e_1-2+2v_1$ is nonnegative.  It follows that $\gon(G\cart B_2)\leq \floor{\frac{g(G\cart B_2)+3}{2}}$.
\end{proof}

\section{Products with gonality equal to $\floor{\frac{g+3}{2}}$}\label{section:equality}

We now determine which nontrivial graph products $G\boxempty H$  satisfy $\textrm{gon}(G\boxempty H)=\lfloor \frac{g(G\boxempty H)+3}{2}\rfloor$.     We start with the following lemma, which determines the gonality of several graph products not yet dealt with.

\begin{lemma}\label{lemma:odds_and_ends}
We have $\gon(B_2\cart B_2)=4$, and $\gon(B_{2,1}\cart K_3)=6$.
\end{lemma}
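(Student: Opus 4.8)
The plan is to get the upper bounds for free from Proposition~\ref{prop:upperbound} and to establish the matching lower bounds by Dhar's burning process (Lemma~\ref{lemma:burning}). Since $\gon(B_2)=\gon(B_{2,1})=\gon(K_3)=2$ (all three graphs have genus $1$, so Lemma~\ref{lemma:g1} applies), Proposition~\ref{prop:upperbound} gives $\gon(B_2\cart B_2)\le 2\cdot 2=4$ and $\gon(B_{2,1}\cart K_3)\le\min\{2\cdot 3,\,2\cdot 3\}=6$. So it remains to show that no effective divisor of degree $3$ on $B_2\cart B_2$, and no effective divisor of degree $5$ on $B_{2,1}\cart K_3$, has positive rank; since any positive-rank divisor is equivalent to an effective one of the same degree, this is enough.

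For $B_2\cart B_2$: this graph has $4$ vertices, each of valence $4$, and is a $4$-cycle with every edge doubled; its automorphism group is the dihedral group of order $8$. Up to this symmetry there are only four effective divisors of degree $3$, according to the underlying partition and the relative positions of the loaded vertices: $(3,0,0,0)$, $(2,1,0,0)$ with the two loaded vertices adjacent, $(2,1,0,0)$ with them opposite, and $(1,1,1,0)$. For each I would pick a chip-free vertex and run the burning process of Lemma~\ref{lemma:burning}, checking directly that the whole graph burns. For instance, for $(3,0,0,0)$ start at the vertex opposite the loaded one: its two neighbours are chip-free and each receives two burning edges, so they burn; this feeds four burning edges into the loaded vertex, which has only three chips, so it burns too. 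The other three cases are equally short. Hence every degree-$3$ effective divisor has rank $0$, so $\gon(B_2\cart B_2)\ge 4$.

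For $B_{2,1}\cart K_3$: write the vertices as $(x,i)$ with $x\in\{a,b,c\}$ and $i\in\{1,2,3\}$, so that the three rows $R_a,R_b,R_c$ each induce a copy of $K_3$, rows $R_a$ and $R_b$ are joined by a double matching, $R_b$ and $R_c$ by a single matching, and $R_a,R_c$ are non-adjacent; the valences are $4$, $5$, and $3$ respectively, and $g(B_{2,1}\cart K_3)=10$. Suppose for contradiction that $D$ is effective of degree $5$ with $r(D)\ge 1$. Since $\deg D=5\le g-1=9$, Lemma~\ref{lemma:spencer} lets us replace $D$ by an equivalent divisor coming from a sourceless partial orientation, so that $(a,i)$ carries at most $3$ chips, $(b,i)$ at most $4$, $(c,i)$ at most $2$, and no two adjacent vertices are simultaneously at their maxima. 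Then I would run the burning process, taking the starting vertex in whichever row is ``lightest''. The key mechanism is the $K_3$-burning principle behind Lemma~\ref{lemma:burnination}: a row containing at most one chip burns completely as soon as one of its vertices ignites; once $R_c$ burns it sends a burning edge into every $(b,i)$, after which $R_b$ burns unless its chips are piled up, and once $R_b$ burns it sends two burning edges into every $(a,i)$, after which $R_a$ burns. Because only $5$ chips are available at least one row is chip-poor, and the cases split cleanly according to the distribution $(s_a,s_b,s_c)$ of chips among the rows. The only configurations that resist this are those with two adjacent vertices of $R_c$ both carrying $2$ chips — precisely the ones excluded by Lemma~\ref{lemma:spencer}. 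In every remaining case the whole graph burns, so by Lemma~\ref{lemma:burning} the debt in $D-(v)$ cannot be cleared, contradicting $r(D)\ge 1$; hence $\gon(B_{2,1}\cart K_3)\ge 6$.

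I expect the main obstacle to be the bookkeeping in the $B_{2,1}\cart K_3$ case. The concavity arguments that streamline the analysis in Section~\ref{section:upperbound} do not apply here, because the three rows have different valences and the matchings between consecutive rows have different multiplicities; so one really has to track the fire row by row, and it takes some care to confirm that, after the Lemma~\ref{lemma:spencer} reduction, no pathological pile-up of chips survives — possibly requiring one more chip-firing move in a borderline configuration, in the spirit of the $m=5$ preprocessing in the proof of Theorem~\ref{theorem:rooks}.
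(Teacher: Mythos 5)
Your treatment of $B_2\cart B_2$ is correct and matches the paper's argument in substance (the paper gives a single uniform burning argument rather than your four-case enumeration, but both work). The upper bounds via Proposition~\ref{prop:upperbound} and the reduction via Lemma~\ref{lemma:spencer} are also the paper's route. The gap is in the $B_{2,1}\cart K_3$ lower bound, specifically in your claim that ``the only configurations that resist this are those with two adjacent vertices of $R_c$ both carrying $2$ chips --- precisely the ones excluded by Lemma~\ref{lemma:spencer}.'' This is false. Consider $D=2(a,1)+3(b,1)$ in your notation: the vertex $(a,1)$ carries $2\le \val-1=3$ chips, $(b,1)$ carries $3\le \val-1=4$ chips, and neither is at its maximum, so $D$ survives the sourceless-partial-orientation reduction. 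Yet $(a,1)$ has exactly $2$ edges leaving the pair $\{(a,1),(b,1)\}$ and $(b,1)$ has exactly $3$, so even if the entire rest of the graph burns, each of these two vertices sees no more burning edges than it has chips, and the pair never ignites --- from \emph{any} chip-free starting vertex. A single pass of Lemma~\ref{lemma:burning} therefore cannot dispose of this divisor. A second family also slips through: a divisor with one chip on each vertex of $R_c$ (e.g.\ $(c,1)+(c,2)+(c,3)+(b,1)+(b,2)$) leaves every $R_c$-vertex with exactly one burning edge and one chip at the end of the process, so $R_c$ never burns.

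The paper closes these holes with exactly the kind of extra work you only gesture at. It shows that $2(u_i)+3(v_i)$ is linearly equivalent to $(u_j)+(v_j)+(u_k)+(v_k)+(w_i)$ and that the latter burns completely if one starts the fire at $w_j$ or $w_k$ (note the starting vertex matters here: starting inside the empty copy of the doubled edge fails), and it rules out both forms \emph{before} running the main case analysis. It also preprocesses the $w$-row by chip-firing when all three $w$-vertices carry a chip, or when two of them carry two chips each, so that the final step of burning the $w$-copy of $K_3$ goes through. Your hedge about ``possibly requiring one more chip-firing move in a borderline configuration'' points in the right direction, but the borderline configurations are not the ones you identify, and without naming and handling them the row-by-row burning argument does not yield $\gon(B_{2,1}\cart K_3)\ge 6$.
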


These graphs are the rightmost pair of graphs in the bottom row of Figure \ref{figure:multigraphs_with_equality}.

\begin{proof}
As usual, Proposition \ref{prop:upperbound} furnishes the desired upper bound on these gonalities.  We will argue lower bounds using Lemma \ref{lemma:burning}.

Suppose for the sake of contradiction that there exists an effective divisor $D$ of degree $3$ and positive rank on $B_2\cart B_2$.  Since $B_2\cart B_2$ has four vertices, we can choose $v$ with no chips on it, and run the burning process from Lemma \ref{lemma:burning}.  Since two edges connect each vertex to each of its neighbor, a vertex could only be safe from a burning neighbor if it had $2$ or more chips.  Thus as the fire spreads around the underlying cycle $C_4$ of the graph, at most one vertex is not burned; but then there are $4$ incident burning edges for that vertex, and at most $3$ chips on the graph, so the whole graph burns.  This contradicts $r(D)>0$, so $\gon(B_2\cart B_2)\geq 4$.

Suppose for the sake of contradiction that there exists an effective divisor $D$ of degree $5$ and positive rank on $B_{2,1}\cart K_3$.  Label the vertices of $B_{2,1}$ as $u$, $v$, and $w$, where $\val(u)=2$, $\val(v)=3$, and $\val(w)=1$; and let $C$ be the cycle connecting $u$ and $v$. Refer to the three copies of $B_{2,1}$ as $G_1$, $G_2$, and $G_3$; and to their vertices as $u_i$, $v_i$, and $w_i$ and cycle as $C_i$ in accordance with our labelling on $G$.  We will refer to the three copies of $K_3$ as $u$-$K_3$, $v$-$K_3$, and $w$-$K_3$, depending on which vertices they are composed of.  This notation is illustrated in Figure \ref{figure:labels}.

 \begin{figure}[hbt]
   		 \centering
\includegraphics[scale=1]{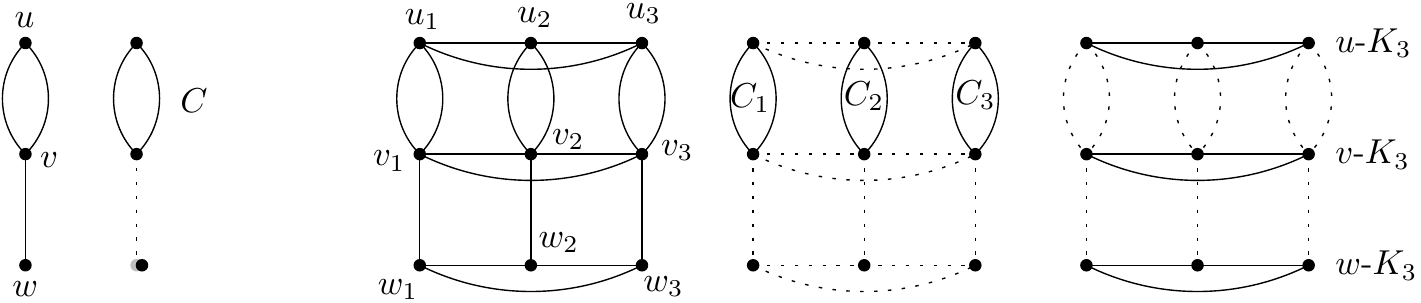}
	\caption{Our labels on $B_{2,1}$ and $B_{2,1}\cart K_3$} 
	\label{figure:labels}
\end{figure}

First we rule out several possible cases for $D$, namely if $D$ is of the form $(u_i)+(v_i)+(u_j)+(v_j)+(w_k)$ where $i$, $j$, and $k$ are all distinct;  and if $D$ is of the form $2(u_i)+3(v_i)$.  It turns out a divisor of the first form is equivalent to a divisor of the second form, as illustrated in Figure \ref{figure:b21k3_divisors}.  Neither divisor has positive rank:  if $D=(u_i)+(v_i)+(u_j)+(v_j)+(w_k)$, we can run the burning process from $w_i$ or $w_j$, and the whole graph burns; and if $D=2(u_i)+3(v_i)$, then it is equivalent to a divisor of the previous form, which does not have positive rank.  Thus we may assume that $D$ does not have either of those forms.

 \begin{figure}[hbt]
   		 \centering
\includegraphics[scale=0.9]{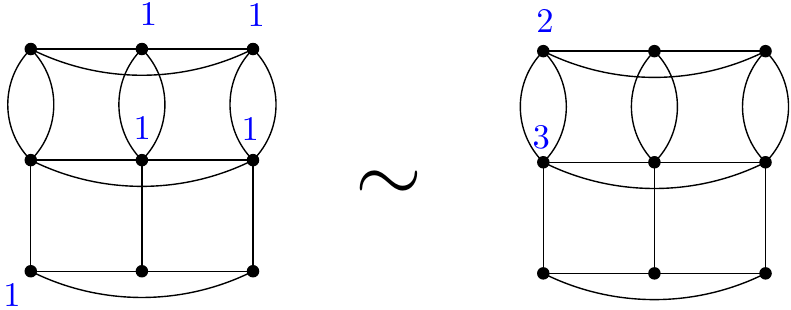}
	\caption{Two types of divisors, equivalent to one another, that we rule out separately} 
	\label{figure:b21k3_divisors}
\end{figure}

Since $g(B_{2,1}\cart K_3)=10>5$, we may assume by Lemma \ref{lemma:spencer} that $D=D_{\mathcal{O}}$ for some sourceless partial orientation $\mathcal{O}$, implying that $D$ places at most $\val(x)-1$ chips on a vertex $x$. If $D$ places a chip on each of $w_1$, $w_2$, and $w_3$, then fire these three vertices to move one chip each to $v_1$, $v_2$, and $v_3$; and if $D$ places at least two chips on two of $w_1,w_2,$ and $w_3$ (but no chip on the third), fire those two vertices to move chips away. Replacing our $D$ with this new divisor, we still have that it places at $\val(x)-1$ chips on a vertex $x$, since there were at most $1$ or $2$ chips off of $w$-$K_3$, depending on which $w$-configuration we were handling.

Since there are three copies of $C$, at least one of them, say $C_1$, has at most one chip.  Choose a vertex on $C_1$ without a chip and run the burning process. Certainly all of $C_1$ burns.  Let $b$ be the number of vertices of $C_2\cup C_3$ that burn.  We now deal with several cases.

\begin{itemize}
    \item Suppose $b=0$.  Then each of the four vertices of $C_2\cup C_3$ have at least one chip.  Since there are only five chips at least one of $v_2$ or $v_3$ must only have one chip, meaning that we cannot have the $w$-$K_3$ burn.  With at most one chip not on $C_2\cup C_3$, the only way to prevent the $w$-$K_3$ from burning is to place $1$ chip on $w_1$.  But then $D$ is of the form $(u_i)+(v_i)+(u_j)+(v_j)+(w_k)$ where $i$, $j$, and $k$ are all distinct, a contradiction.
    \item  Suppose $b=1$; say it is $u_2$ that burns.  Counting up burning edges and noting that no other vertex of $C_2\cup C_3$ burns, we know $v_2$ has three chips, $u_3$ has two chips, and $v_3$ has one chip.  This exceeds our five chips, a contradiction.  The same argument works for any other vertex of $C_2\cup C_3$.
    \item  Suppose $b=2$.  If $u_i$ and $v_j$ burn for $i\neq j$, then $v_i$ and $u_j$ each need $3$ chips, which is impossible.  If $u_i$ and $u_j$ burn, then $v_i$ and $v_j$ each need $3$ chips, which is impossible; a similar contradiction occurs if $v_i$ and $v_j$ burn.  If $u_i$ and $v_i$ burn, then each of $u_i$ and $v_j$ needs at least $2$ chips.  In order to prevent $v_i$ from burning, it must have a third chip:  there is no way to use $1$ chip to prevent $w_i$ from burning.  But then $D$ is of the form $2(u_i)+3(v_i)$, a contradiction.
    
    \item  Suppose $b=3$.  If $u_i$ does not burn, then since all vertices incident to it are burning, it must have $4$ chips, a contradiction since it can have at most $\val(u_i)-1=3$ chips.  If $v_i$ does not burn, then it must have $4$ chips, which is not yet a contradiction since $\val(v_i)-1=4$.  However, since $C_j$ and $C_k$ are burning for $j\neq k$, $1$ chip is not enough to prevent $w$-$K_3$ from burning.  This means $w_i$ is burning, so $v_i$ needs $5$ chips, a contradiction.
\end{itemize}
Thus we have that $b=4$, so all of $C_1\cup C_2\cup C_3$ burns.  We know at least one $w_i$ has no chips, so that vertex burns; and the other $w_j$ and $w_k$ now have $2$ incoming burning edges each.  By our construction of $D$ we know they can't both have $2$ chips, so one burns; and then the other burns as well, since it can't have more than $2$ chips.  We conclude the whole graph burns, contradicting $r(D)>0$.  We conclude that $\gon(B_{1,2}\cart K_3)\geq 6$.
\end{proof}

We are now ready to state the main theorem of this section. 

\begin{theorem}\label{theorem:equality}
Let $G$ and $H$ be graphs with at least two vertices each.  Then $\textrm{gon}(G\boxempty H)=\floor{\frac{g(G\boxempty H)+3}{2}}$ if and only if $G\boxempty H$ is $K_2\boxempty K_2$, $K_2\boxempty P_3$, $P_3\boxempty P_3$, $K_3\boxempty K_3$,  $B_{1,2}\cart K_3$, $B_2\cart B_2$, or $O\boxempty K_2$ where $O$ is a genus $1$ graph with $3\leq |V(O)|\leq 5$.
\end{theorem}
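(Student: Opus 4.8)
The plan is to exploit the machinery from Theorem~\ref{theorem:main} as a sieve: in that proof we showed that for \emph{most} nontrivial products $G\boxempty H$ the gap $\frac{g(G\boxempty H)+3}{2}-\gon(G\boxempty H)$ is at least $2$, so such products cannot have $\gon(G\boxempty H)=\floor{\frac{g(G\boxempty H)+3}{2}}$. The first step is therefore to revisit each branch of that argument and extract exactly which configurations of $(G,H)$ produce a gap of $0$ or~$1$. Recall the normalization $e_2v_1\le e_1v_2$. The computations there show: if $v_2\ge 4$ the gap is $\ge 2$; if $g(H)\ge 2$ the gap is $\ge 2$. So for equality to be possible we must have $v_2\le 3$ and $g(H)\le 1$, i.e.\ $H\in\{K_2,P_3,K_3,B_2,B_{2,1}\}$, and symmetrically, once the normalization is broken, we must still have $G$ constrained through the case analysis. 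I would go case by case through $H\in\{K_2,P_3,K_3,B_2,B_{2,1}\}$ exactly as in Theorem~\ref{theorem:main}, but this time solving for when the displayed lower bounds on the gap are $\le 1$ rather than just $\ge 0$; this pins down the edge/vertex counts of $G$ and hence (after a short finite check) the finitely many candidate products listed in the statement.

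Concretely: for $H=K_2$ the gap bounds are $e_1-\tfrac{3v_1}{2}+2$ and $-e_1+\tfrac{3v_1}{2}-2$, whose sum is $0$; the gap can only be small when both are near $0$, forcing $e_1\approx \tfrac{3v_1}{2}$, and combined with $e_1\ge v_1-1$ and the requirement that the gap is an integer $\le 1$ this isolates small genus-$0$ or genus-$1$ graphs $G$—this is where $O\boxempty K_2$ with $O$ of genus $1$ and $3\le|V(O)|\le 5$ emerges (together with the already-handled $K_2\boxempty K_2$, $K_2\boxempty P_3$, and needing to double-check what happens when $G$ is a tree or has genus $1$ with $|V(O)|$ outside $\{3,4,5\}$, where Theorem~\ref{theorem:genus_1_345} shows $\gon(O\boxempty K_2)=\min\{|V(O)|,4\}$ and one compares directly with $\floor{(g+3)/2}=e_1-\tfrac{v_1}{2}+2$). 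For $H=P_3$ with $G$ not a tree the gap is already $\ge 2$, and for $G$ a tree the explicit formula $\gon=\min\{3,v_1\}$ versus $\floor{(g+3)/2}=v_1$ gives equality exactly when $v_1=3$, i.e.\ $G=P_3$ (giving $P_3\boxempty P_3$; note $K_2\boxempty P_3$ also arises from the $H=K_2$ branch). For $H=K_3$ and $H=B_{2,1}$ the two gap bounds sum to $0$ after using $v_1\ge 2$, so equality forces the extremal case $v_1=2$; then $G\in\{K_2,B_2,B_3,\dots\}$ and a finite check singles out $K_3\boxempty K_3$ and $B_{2,1}\boxempty K_3$ (using Lemma~\ref{lemma:odds_and_ends} for the latter, and Theorem~\ref{theorem:rooks} or Proposition~\ref{prop:tree_complete} for the former to confirm the exact gonality). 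For $H=B_2$, again the two bounds sum to $0$, $v_1=2$ is forced, $G\in\{K_2,B_2,\dots\}$, $K_2\boxempty B_2$ has genus $1$ hence is excluded (it is the genus-1 case, not equality, since $\gon=2=\floor{(1+3)/2}$—actually this \emph{does} give equality and must be placed under $O\boxempty K_2$ with $O=B_2$... wait, $B_2\boxempty K_2 = C_4$-like; I would double check: $B_2$ has genus $1$, so $B_2\boxempty K_2$ falls under "$O\boxempty K_2$, $O$ genus $1$" only if $|V(B_2)|=2\ge 3$ fails—so it is genuinely separate and must be checked by hand), and $G=B_2$ gives $B_2\boxempty B_2$, handled by Lemma~\ref{lemma:odds_and_ends}.

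The two halves of the iff then go as follows. For ($\Leftarrow$): for each of the seven listed families, compute $g(G\boxempty H)$ and $\floor{(g+3)/2}$ explicitly, and compute $\gon(G\boxempty H)$ using the results already available—Proposition~\ref{prop:upperbound} for the upper bound, and Proposition~\ref{prop:tree_tree}, Proposition~\ref{prop:tree_complete}, Theorem~\ref{theorem:rooks}, Theorem~\ref{theorem:genus_1_345}, and Lemma~\ref{lemma:odds_and_ends} for the exact values—then check the equality numerically. For instance, $\gon(K_2\boxempty K_2)=\gon(C_4)=2=\floor{(1+3)/2}$; $\gon(P_3\boxempty P_3)=3=\floor{(4+3)/2}$; $\gon(K_3\boxempty K_3)=4=\floor{(7+3)/2}$; and for $O$ of genus $1$ with $|V(O)|\in\{3,4,5\}$, $\gon(O\boxempty K_2)=\min\{|V(O)|,4\}$ while $g(O\boxempty K_2)=2e_1-v_1+1$ with $e_1=v_1$ (genus 1), so $\floor{(g+3)/2}=\floor{(v_1+3)/2}$, which equals $\min\{v_1,4\}$ precisely for $v_1\in\{3,4,5\}$ (a two-line check: $v_1=3\Rightarrow 3=3$, $v_1=4\Rightarrow 3\ne 4$—hmm, this needs care, so I would recompute carefully, possibly the correct count of such $O$ is what produces the claimed total of $11$ nonsimple and $12$ simple graphs in the paper's count). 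For ($\Rightarrow$): this is exactly the sieve above. \textbf{The main obstacle} I anticipate is the $H=K_2$ branch: unlike the others it does not force $v_1$ to be bounded directly, so one must carefully run through $G$ a tree (then $G\boxempty K_2$ is a tree-by-$K_2$, genus $v_1-1$, easy), $G$ of genus $1$ (the $O\boxempty K_2$ family, requiring Theorem~\ref{theorem:genus_1_345} and a delicate comparison for $|V(O)|\le 2$ and $|V(O)|\ge 6$), and $G$ of genus $\ge 2$ (where $\gon(G)\le g(G)$ from Lemma~\ref{lemma:gon_g} should push the gap up, but this needs to be verified to be $\ge 2$, or else produce more sporadic examples which would contradict the theorem statement—so getting this bound tight is essential). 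A secondary subtlety is bookkeeping: several candidate products appear in more than one branch (e.g.\ $K_2\boxempty P_3$), and one must make sure the final list is exactly the seven families with no omissions and no spurious extras, cross-checking against the claimed count of $23$ total graphs.
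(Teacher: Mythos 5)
Your overall strategy is exactly the paper's: re-run the case analysis of Theorem \ref{theorem:main} as a sieve, keeping only those $(G,H)$ for which the displayed lower bounds on the gap $\frac{g+3}{2}-\gon$ fail to reach $1$, and then pin down the survivors using Propositions \ref{prop:tree_tree} and \ref{prop:tree_complete}, Theorems \ref{theorem:rooks} and \ref{theorem:genus_1_345}, and Lemma \ref{lemma:odds_and_ends}. Your identification of the $H=K_2$, $g(G)\geq 2$ branch as the delicate one, and of $\gon(G)\leq g(G)$ from Lemma \ref{lemma:gon_g} as the tool that pushes the gap to at least $1$ there, matches the paper.

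However, several of your computations are wrong as written and would derail the sieve if followed literally. First, for $O$ of genus $1$ on $v_1$ vertices one has $e_1=v_1$, so $g(O\boxempty K_2)=2e_1-v_1+1=v_1+1$ and $\floor{\frac{g+3}{2}}=\floor{\frac{v_1}{2}}+2$, not $\floor{\frac{v_1+3}{2}}$; with the correct formula the comparison with $\gon(O\boxempty K_2)=\min\{v_1,4\}$ gives equality precisely for $v_1\in\{3,4,5\}$ (e.g.\ $v_1=4$ yields $\floor{2}+2=4=\min\{4,4\}$), which resolves the mismatch you flagged at $v_1=4$. Second, $K_3\boxempty K_3$ has genus $10$ and gonality $6$ by Theorem \ref{theorem:rooks}, so the check is $6=\floor{\frac{13}{2}}$, not ``$4=\floor{\frac{7+3}{2}}$.'' Third, for $H=K_3$ the gap bounds only force $v_1<\frac{7}{2}$, i.e.\ $v_1\in\{2,3\}$, not $v_1=2$; if $v_1=2$ were forced you would lose both $K_3\boxempty K_3$ and $B_{2,1}\boxempty K_3$ from your own list. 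One must then separately bound $e_1$ (the paper shows $v_1=2$ forces $e_1=1$ and $v_1=3$ forces $e_1\leq 3$, leaving exactly $K_2$, $P_3$, $K_3$, $B_{2,1}$ as candidates, with $P_3$ eliminated since $K_3$ is not a tree). Finally, $K_2\boxempty B_2$ has genus $3$ and gonality $\min\{2,4\}=2<3=\floor{\frac{3+3}{2}}$, so your vacillation there resolves to ``no equality,'' consistent with its absence from the list. With these corrections the argument closes and coincides with the paper's proof.
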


Before we prove this theorem, we enumerate the possible graphs $O$ of genus $1$ with between $3$ and $5$ vertices.  Such an $O$ consists of a single cycle with $c\geq 2$ vertices, with up to $5-c$ other vertices connected to this cycle without forming a new cycle; note that $c=2$ if and only if $G$ is not simple.  There end up being $8$ such simple graphs, all illustrated in the top row of Figure \ref{figure:genus_1_345}:  the cycles $C_3$, $C_4$, and $C_5$; the so-called \emph{tadpole graphs} $T_{3,1}$, $T_{3,2}$, and $T_{4,1}$; the so-called \emph{bull graph} $B$; and the so-called \emph{cricket graph} $K$.  Note that the cycle graph $C_3$ is equal to the complete graph $K_3$.  There are $9$ non-simple graphs, pictured on the second row of Figure \ref{figure:genus_1_345}.

\begin{figure}[hbt]
   		 \centering
\includegraphics[scale=0.8]{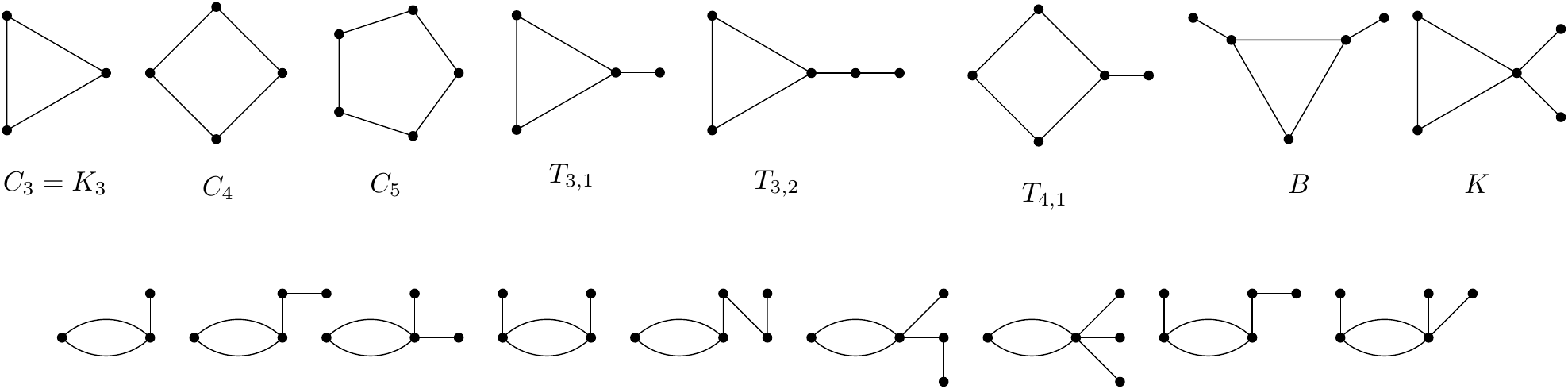}
	\caption{The graphs of genus $1$ with between $3$ and  $5$ vertices, simple and non-simple}
	\label{figure:genus_1_345}
\end{figure}


We illustrate the content of Theorem \ref{theorem:equality} with Table \ref{table:products} and with Figure \ref{figure:multigraphs_with_equality}, which respectively show all simple and non-simple nontrivial graph products $G\boxempty H$ satisfying $\textrm{gon}(G\boxempty H)=\floor{\frac{g(G\boxempty H)+3}{2}}$.  We include references in the table to results from this paper that imply the claimed gonality, although many of these gonalities were previously known.  In particular, the gonality of the $m\times n$ grid was proved to be $\min\{m,n\}$ in \cite{db}; and the gonality of $C_m\boxempty P_n$ was proved to be $\min\{m,2n\}$ for $m\neq 2n$ in \cite{treewidth}, giving the gonality of the $2\times 3$ rook's graph and the $5$-prism.

\begin{table}
\begin{center}
\begin{tabular}{ |c|c|c|c|c| } 
  \hline
 Product & Name or description & Graph & Gonality & Source for gonality\\ 
 \hline
 $K_2\boxempty K_2$ & $2\times 2$ grid graph & \includegraphics{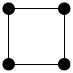} & 2& Proposition \ref{prop:tree_tree} \\ 
 \hline
$K_2\boxempty P_3$ & $2\times 3$ grid graph& \includegraphics{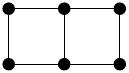} & 2& Proposition \ref{prop:tree_tree}\\ 
 \hline
  $P_3\boxempty P_3$ & $3\times 3$ grid graph & \includegraphics{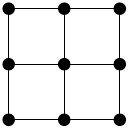} & 3& Proposition \ref{prop:tree_tree} \\ 
 \hline
 $K_3\boxempty K_3$ & $3\times 3$ rook's graph & \includegraphics{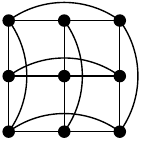} & 6& Theorem \ref{theorem:rooks}\\
 \hline
 $K_3\boxempty K_2$ & $2\times 3$ rook's graph & \includegraphics{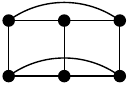} & 3& Proposition \ref{prop:tree_complete}, Theorem \ref{theorem:rooks}\\ 
 \hline
  $C_4\boxempty K_2$ & $4$-prism, or $3$-cube & \includegraphics[scale=0.5]{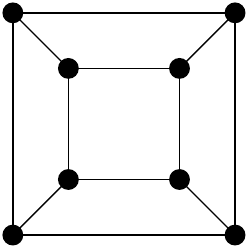} & 4& Theorem \ref{theorem:genus_1_345}\\ 
 \hline
  $C_5\boxempty K_2$ & $5$-prism & \includegraphics[scale=0.5]{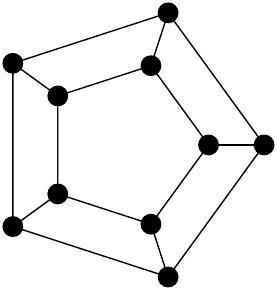} & 4& Theorem \ref{theorem:genus_1_345}\\ 
 \hline
   $ T_{3,1}\boxempty K_2$ & $3$-prism with flap & \includegraphics[scale=0.5]{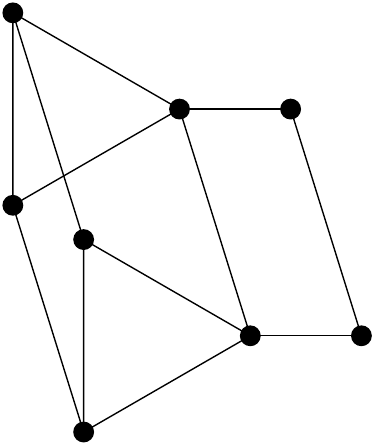} & 4& Theorem \ref{theorem:genus_1_345}\\ 
 \hline
   $ T_{3,2} \boxempty K_2$ & $3$-prism with long flap  & \includegraphics[scale=0.5]{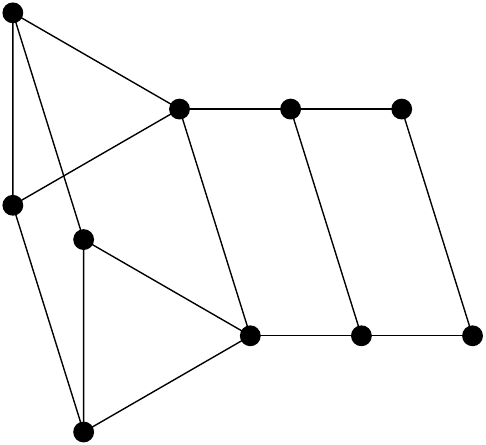} & 4& Theorem \ref{theorem:genus_1_345}\\ 
 \hline
   $ T_{4,1}\boxempty K_2$ & $4$-prism with flap & \includegraphics[scale=0.5]{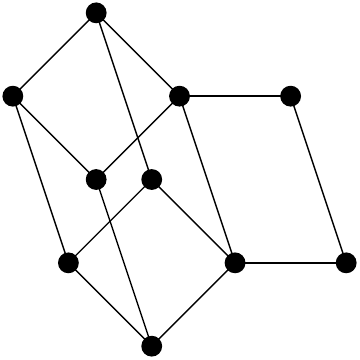} & 4& Theorem \ref{theorem:genus_1_345}\\ 
 \hline
   $B \boxempty K_2$ & $3$-prism with two flaps & \includegraphics[scale=0.5]{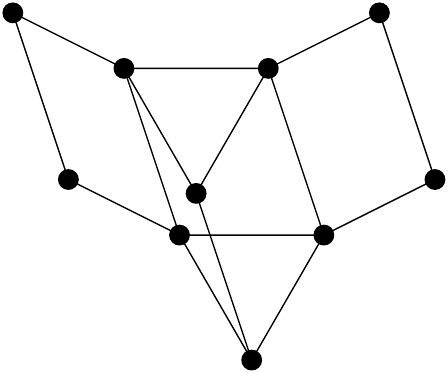} & 4& Theorem \ref{theorem:genus_1_345}\\ 
 \hline
    $K\boxempty K_2$ &$3$-prism with two adjacent flaps & \includegraphics[scale=0.5]{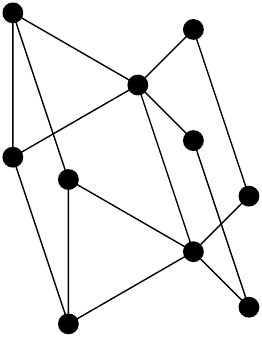} & 4& Theorem \ref{theorem:genus_1_345}\\ 
 \hline
\end{tabular}
\end{center}
\caption{All nontrivial simple graph products $G\boxempty H$ with gonality equal to $\floor{\frac{g(G\boxempty H)+3}{2}}$}
\label{table:products}
\end{table}

 \begin{figure}[hbt]
   		 \centering
\includegraphics[scale=0.9]{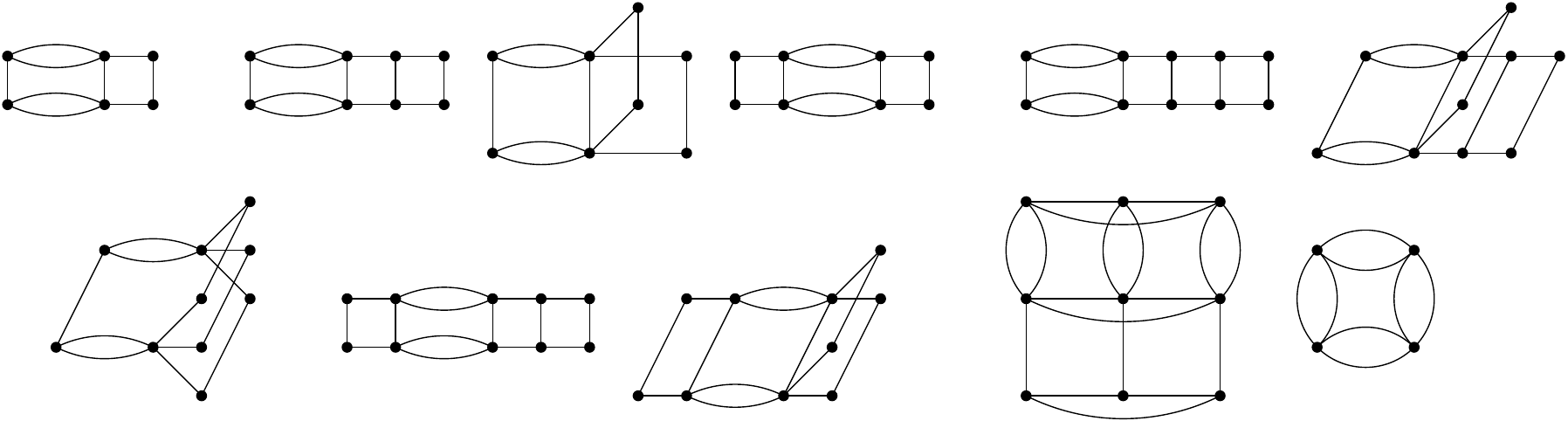}
	\caption{All nontrivial products $G\cart H$ with $\gon(G\cart H)=\floor{\frac{g(G\cart H)+3}{2}}$, where $G$ and $H$ are not both simple} 
	\label{figure:multigraphs_with_equality}
\end{figure}

\begin{proof}[Proof of Theorem \ref{theorem:equality}]
As before, we may assume without loss of generality that $e_2v_1\leq e_1v_2$.  By the proof of Theorem \ref{theorem:main}, if $v_2\geq 4$ or $g(H)\geq 2$ there is a gap between $\textrm{gon}(G\boxempty H)$ and $\lfloor \frac{g+3}{2}\rfloor$.  Thus to have equality we must have $v_2<4$ and $g(H)\leq 1$, so $H$ must be one of the graphs $K_2$,  $P_3$, $K_3$, $B_{2,1}$, and $B_2$.  For some of these cases, we will consider $\Delta(G\boxempty H):=\frac{g(G\boxempty H)+3}{2}-\textrm{gon}(G\boxempty H)$.  In any case where $\Delta(G\boxempty H)\geq 1$, we will have $\textrm{gon}(G\boxempty H)<\lfloor \frac{g(G\boxempty H)+3}{2}\rfloor$.

Let $H=K_2$.  We will deal with three cases sorted by the genus $g$ of $G$:  $g=0$, $g=1$, and $g\geq 2$.
\begin{itemize}
    \item 
If $g=0$, then $G$ is a tree, so $\textrm{gon}(G\boxempty K_2)=\min\{2,v_1\}=2$ by Proposition \ref{prop:tree_tree}.  We then have $v_1=e_1-1$, so $\floor{\frac{g(G\boxempty K_2)+3}{2}}=\floor{\frac{e_1v_2+e_2v_1-v_1v_2+4}{2}}=\floor{\frac{2(v_1-1)+v_1-2v_1+4}{2}}=\floor{\frac{v_1+2}{2}}$.  This is equal to $2$ if and only if $v_1=2$ or $v_1=3$, so we must have $G=K_2$ or $G=P_3$. This gives us the graphs $K_2\boxempty K_2$ and $K_2\boxempty P_3$.


\item  Next assume that $G$ has genus $1$.  We know that $\gon(G\cart K_2)=\min\{v_1,4\}$ by  Theorem \ref{theorem:genus_1_345}.  Note that $\floor{\frac{g(G\boxempty K_2)+3}{2}}=\floor{e_1-\frac{v_1}{2}}+2=\floor{\frac{v_1}{2}}+2$.  If $v_1=3$, then this equals $3$; and if $v_1=4$ or $v_1=5$, then this equals $4$.  In both cases, we do have that $\floor{\frac{g(G\boxempty K_2)+3}=\min\{v_1,4\}$.  If $v_1=2$, then  $\floor{\frac{v_1}{2}}+2=3>2=\gon(G\cart K_2)}$; and if $v_1\geq 6$, then $\floor{\frac{g(G\boxempty K_2)+3}{2}}>4=\gon(G\cart K_2)$.  Thus if $G$ has genus $1$, we have  $\gon(G\cart K_2)=\floor{\frac{g(G\boxempty K_2)+3}{2}}$ if and only if $3\leq v_1\leq 5$.  This gives us $17$ products of the form $O\cart K_2$, where $O$ is any of the graphs in Figure \ref{figure:genus_1_345}.

\item  Assume now that $G$ has genus $g\geq 2$. We will show that in this case $\Delta(G\boxempty K_2)\geq 1$.  We know from the proof of Theorem \ref{theorem:main} that $\Delta(G\boxempty K_2)\geq e_1-\frac{3v_1}{2}+2$.  Since $g\geq 2$, we know by Lemma \ref{lemma:gon_g} that $\textrm{gon}(G)\leq g$, so $\textrm{gon}(G\boxempty K_2)\leq 2\textrm{gon}(G)\leq 2g=2e_1-2v_1+2$. It follows that
\begin{align*}
    \Delta(G\boxempty K_2)\geq& e_1-\frac{v_1}{2}+2 -(2e_1-2v_1+2)
    \\=& -e_1+\frac{3v_1}{2}.
\end{align*}
If $e_1-\frac{3v_1}{2}+2\geq 1$, then we have our desired lower bound on $\Delta(G\boxempty K_2)$.  Otherwise, we have $e_1-\frac{3v_1}{2}+2\leq \frac{1}{2}$, implying $-e_1+\frac{3v_1}{2}-2\geq -\frac{1}{2}$, and so $-e_1+\frac{3v_1}{2}\geq \frac{3}{2}$.  Since $\Delta(G\boxempty K_2)\geq -e_1+\frac{3v_1}{2}$, we still have $\Delta(G\boxempty K_2)\geq 1$.  We conclude that $\Delta(G\boxempty K_2)\geq 1$, meaning that $\textrm{gon}(G\boxempty K_2)<\floor{\frac{g(G\boxempty K_2)}{2}}$.
\end{itemize}

Now let $H=P_3$. A careful reading of the  proof of Theorem \ref{theorem:main} shows that if $G$ is not a tree, then $\textrm{gon}(G\boxempty P_3)< \lfloor \frac{g(G\boxempty P_3)+3}{2}\rfloor$; and that if $G$ is a tree, then $\floor{\frac{g(G\boxempty P_3)+3}{2}}=v_1$ and $\gon(G\boxempty P_3)=\min\{3,v_1\}$, so
$\textrm{gon}(G\boxempty P_3)= \lfloor \frac{g(G\boxempty P_3)+3}{2}\rfloor$ if and only if $v_1=\min\{v_1,3\}$.  It follows that $v_1$ must be either $2$ or $3$, and thus $G$ must be either $K_2$ or $P_3$.

Now let $H=K_3$. From the proof of Theorem \ref{theorem:main}, we have
\[\Delta(G\boxempty K_3)\geq \max\left\{\frac{3e_1}{2}+2-2v_1,-\frac{3e_1}{2}-4+3v_1\right\}.\]
If we have $\frac{3e_1}{2}+2-2v_1>\frac{1}{2}$, then $\Delta(G\boxempty K_3)\geq \frac{3e_1}{2}+2-2v_1\geq 1$ and we're done.  If not, then $\frac{3e_1}{2}+2-v_1\leq \frac{1}{2}$.  We then have $-\frac{3e_1}{2}-2+2v_1\geq -\frac{1}{2}$, and from there that $-\frac{3e_1}{2}-4+3v_1\geq -\frac{5}{2}+v_1$.  We thus have $\Delta(G\boxempty K_3)\geq -\frac{5}{2}+v_1$.  If $v_1\geq \frac{7}{2}$, then we have $\Delta(G\boxempty K_3)\geq 1$, so the only way we could have $\textrm{gon}(G\boxempty K_3)=\lfloor \frac{g(G\boxempty K_3)+3}{2}\rfloor$ is if $v_1<\frac{7}{2}$, which implies that $v_1=2$ or $v_1=3$.
 If $v_1=2$, the bound  $\Delta(G\cart K_3)\geq \frac{3e_1}{2}+2-2v_1$ becomes $\Delta(G\cart K_3)\geq \frac{3e_1}{2}-2$; and if $v_1=3$, it becomes $\Delta(G\cart K_3)\geq \frac{3e_1}{2}-4$.  If $v_1=2$ and $e_1\geq 2$, then $\Delta(G\cart K_3)\geq\frac{3\cdot 2}{2}-2=1$; and if $v_1=3$ and $e_1\geq 4$, then  $\frac{3e_1}{2}-4\geq \frac{3\cdot 4}{2}-4=2$. In both these cases we have $\Delta(G\cart K_3)\geq1$, implying a gap. Thus the only possible cases for equality are when $v_1=2$ and $e_1=1$; and when $v_1=3$ and $e_1\leq 3$.  The only graphs satisfying these properties are $K_2$, $P_3$, $K_3$, and $B_{2,1}$.  We can rule $P_3$ out since $K_3$ is not a tree. 
By Proposition \ref{prop:tree_complete}, Theorem \ref{theorem:rooks}, and Lemma \ref{lemma:odds_and_ends}, we have $\textrm{gon}(K_2\boxempty K_3)=3$, $\textrm{gon}(K_3\boxempty K_3)=6$, and $\textrm{gon}(B_{2,1}\boxempty K_3)=6$.  On the other hand, we have $\floor{\frac{g(K_2\boxempty K_3)+1}{2}}=3$, $\floor{\frac{g(K_3\boxempty K_3)+1}{2}}=6$, and $\floor{\frac{g(P_3\boxempty K_3)+1}{2}}=6$.  Thus we do have equality for the three products $K_2\boxempty K_3$, $K_3\boxempty K_3$, and $B_{2,1}\boxempty K_3$ (the first of which we already knew from our analysis of $K_2$).

Since $B_{2,1}$ and $K_3$ both have $3$ edges, $3$ vertices, genus $1$, and gonality $2$, an identical argument shows that if $H=B_{2,1}$, then we need $G\in\{K_2,K_3,B_{2,1}\}$ in order to have equality.  We do have $\gon(G\cart B_{2,1})=\floor{\frac{g(G\cart B_{2,1})+3}{2}}$ for $G=K_2$ and $G=K_3$, as already determined earlier in this proof.  However, $\gon(B_{2,1}\cart B_{2,1})\leq 5$ by Figure \ref{figure:counterexample}, and $\floor{\frac{g(B_{2,1}\cart B_{2,1})+3}{2}}=\floor{\frac{10+3}{2}}=6$, so we do not have equality in this case. 

 Finally, if $H=B_2$, recall that we have the lower bounds $\Delta(G\cart B_2)\geq e_1-2v_1+2$ and $\Delta(G\cart B_2)\geq-e_1+2v_1-2$.  One of these lower bounds implies $\Delta(G\cart B_2)\geq 1$ unless $e_1-2v_1+2=-e_1+2v_1-2=0$.  Suppose we are in this latter case, which implies that $g(G)=e_1-v_1+1=v_1-1$.  We deal with two cases: when $G$ has genus at most $1$, and when $g(G)\geq 2$. 

\begin{itemize}
\item        If $g(G)=0$, then $v_1-1=0$ and $v_1=1$, a contradiction.  If $g(G)=1$, then $v_1=2$.  The only graph of genus $1$ with $2$ vertices is $B_2$; we do indeed have that $\gon(B_2\cart B_2)=4=\floor{\frac{g(B_2\cart B_2)+3}{2}}$ by Lemma \ref{lemma:odds_and_ends}. This graph is the rightmost graph on the bottom row in Figure \ref{figure:multigraphs_with_equality}.

    \item      If $g(G)\geq 2$, we can improve the bound $\Delta(G\cart B_2)\geq -e_1+2v_1-2$ to $\Delta(G\cart B_2)\geq -e_1+{2}v_1$, since we have $\gon(G)\leq 2g(G)$.  Since $-e_1+2v_1-2=0$, we have $-e_1+2v_1=2$.  This lower bound of $2$ on  $\Delta(G\cart B_2)$ implies we cannot have equality.

\end{itemize}     
\end{proof}
\medskip

\noindent \textbf{Acknowledgements.}  The authors would like to thank Franny Dean, David Jensen, Nathan Pflueger, Teresa Yu, and Julie Yuan for many helpful conversations about product graphs and graph gonality.  The authors are grateful for support they received from NSF Grants DMS1659037 and DMS1347804, and from the Williams College SMALL REU program.

\bibliographystyle{alpha}
\bibliography{bibliography}

\end{document}